\newtheorem{theorem}{Theorem}[section]
\newtheorem{lemma}[theorem]{Lemma}
\newtheorem{proposition}[theorem]{Proposition}
\newtheorem{definition}[theorem]{Definition}
\newtheorem{corollary}[theorem]{Corollary}
{\theorembodyfont{\rmfamily}
\newtheorem{remark}[theorem]{Remark}
}
\DeclareRobustCommand{\lara}[1]{\langle#1\/\rangle}
\newlength{\mywidth}
\newcommand{\N}{\mathbb{N}}
\newcommand{\Z}{\mathbb{Z}}
\newcommand{\R}{\mathbb{R}}
\newcommand{\T}{\mathbb{T}}
\newcommand{\E}{\mathcal{E}}
\newcommand{\1}{\mathbb{1}}
\newcommand{\Om}{\Omega}
\newcommand{\Id}{I\!d}
\newcommand{\supp}{{\rm supp}}
\newcommand{\Vol}{{\rm Vol}}
\renewcommand{\S}{\mathcal{S}}
\newcommand{\black}[1]{{\color{black}#1}}
\newcommand{\red}[1]{{\color{black}#1}}
\def\qed{\hfill $\square$ \goodbreak \medskip}
\def\eps{\varepsilon}
\newcounter{mnotecount}[section]
\newcommand{\rmnote}[1]{}
\begin{document}
\title{Polygons as maximizers of Dirichlet energy or first eigenvalue of Dirichlet-Laplacian among convex planar domains}
\author{Jimmy {\sc Lamboley}\footnote{Institut de Math\'ematiques de Jussieu-PRG, Sorbonne Universit\'e
4 place Jussieu, 75252 Paris Cedex 05, France},  
Arian {\sc Novruzi}\footnote{Department of Mathematics and Statistics, University of Ottawa, STEM Complex,  Ottawa, Ontario, Canada},
Michel {\sc Pierre}\footnote{\'Ecole Normale Sup\'erieure de Rennes and Institut de Recherche Math\'ematique de Rennes,
Campus de Ker Lann, 35170-Bruz, France}
}
\maketitle

\begin{abstract}
{
{We prove that solutions to several shape optimization problems in the
plane, with a convexity constraint on the admissible domains, are polygons. The main
terms of the shape functionals we consider are either the Dirichlet energy $E_{f}(\Om)$ of the
Laplacian in the domain $\Om$ or the first eigenvalue $\lambda_{1}(\Om)$ of the Dirichlet-Laplacian.
Usually, one considers minimization of such functionals (often with measure constraint),
as for example for the famous Saint-Venant and Faber-Krahn inequalities. By adding
the convexity constraint (and possibly other natural constraints), we instead consider the rather unusual and difficult question of maximizing these functionals. This paper follows a series of papers by the authors, where the leading idea is that a certain concavity property of the shape functional that is minimized leads optimal shapes
to locally saturate their convexity constraint, which geometrically means that they are
polygonal. In these previous papers, the leading term in the shape functional
was usually the opposite of the perimeter, for which the aforementioned concavity property was
rather easy to obtain through computations of its second order shape derivative.
By carrying classical shape calculus, a similar concavity property can be observed for
the opposite of $E_{f}(\Om)$ or $\lambda_{1}(\Om)$ when shapes are smooth and convex. The main novelty in
the present paper is the proof of a weak convexity property of $E_{f}(\Om)$ and $\lambda_{1}(\Om)$
among planar convex shapes, namely rather non-smooth shapes. This involves new computations and estimates of the
second order shape derivatives of $E_{f}(\Om)$ and $\lambda_{1}(\Om)$ interesting for themselves.}}
\end{abstract}

\section{Introduction}
\paragraph{The problems:}
In this paper, we will discuss the four following shape optimization problems:
\begin{equation}
\max\Big\{E_{f}(\Om)+\mu|\Om|,\; \Om\subset\R^2,\; \textrm{ convex},\; \Om\in\S_{ad}\Big\},
\label{e:Pb1}
\end{equation}
\begin{equation}
\max\Big\{\lambda_{1}(\Om)+\mu|\Om|,\; \Om\subset\R^2,\; \textrm{ convex},\; \Om\in\S_{ad}\Big\},
\label{e:Pb2}
\end{equation}
\noindent 
{which will be referred to as penalized problems,} and
\begin{equation}
\max\Big\{E_{f}(\Om),\; \Om\subset\R^2,\; \textrm{ convex},\; \Om\in\S_{ad}\textrm{ and }|\Om|=m_{0}\Big\},
\label{e:Pb3}
\end{equation}
\begin{equation}
\max\Big\{\lambda_{1}(\Om),\; \Om\subset\R^2,\; \textrm{ convex},\; \Om\in\S_{ad}\textrm{ and }|\Om|=m_{0}\Big\},
\label{e:Pb4}
\end{equation}
\noindent 
{which are the volume constrained versions.}

Here, for a given $f\in L^2(\R^2)$ and $\Om\subset\R^2$ bounded and open, we denoted
\begin{eqnarray}
E_{f}(\Om)
&=&
\min\left\{\frac{1}{2}\int_{\Om}|\nabla {U}|^2-\int_{\Om}f{U}, \;\;{U}\in H^1_{0}(\Om)\right\},
\;\; \textrm{resp.}
\label{e:Ef}
\\
\lambda_{1}(\Om)
&=&
\min\left\{\frac{\int_{\Om}|\nabla {U}|^2}{\int_{\Om}{U}^2}, \;\;{U}\in H^1_{0}(\Om)\setminus\{0\}\right\},
\label{e:l1}
\end{eqnarray}
the Dirichlet energy (associated to the right hand side $f$), resp. the first Dirichlet eigenvalue of the Laplace operator { of the domain $\Om$}. 
Also, {$|\Om|=\Vol(\Om)$ denotes the volume of the shape $\Om$}, $\mu$, $m_0$ are positive real numbers,
and $\S_{ad}$ denotes a set of planar shapes including various constraints different from the convexity or the volume constraint. We will consider the case
\begin{equation}\label{e:Sad}
\S_{ad}=\{
\Om\subset \R^2,\;\Om\;\textrm{open}, \;\;\;D_{1}\subset \Om\subset D_{2}\},
\end{equation}
where $D_{1},D_{2}$ are fixed convex open sets{, even though our results could be adapted to other cases, see \cite[Remark 4.6]{LNP2}}.

It is well known that there exist $U$ and $U_{1}$ solutions to \eqref{e:Ef} and \eqref{e:l1} respectively, satisfying
\begin{equation}\label{eq:state}
\left\{\begin{array}{l}-\Delta U=f\textrm{ in }\Om\\[3mm] U\in H^1_{0}(\Om)\end{array}\right.\quad\textrm{ and }\quad
\left\{\begin{array}{l}-\Delta U_{1}=\lambda_{1}(\Om)U_{1}\textrm{ in }\Om\\[3mm]U_{1}\in H^1_{0}(\Om)\end{array}\right.
\end{equation}
{In the case of $U_{1}$, we can also make the choice that $U_{1}>0$ and $\int_{\Om}U_{1}^2dx =1$, which we will do in the rest of the paper.}

\paragraph*{{Framework and }motivation.}
This paper continues a series of three papers by the authors \cite{LN,LNP1,LNP2} (see also \cite[Section 3.5]{H20Sha}) and solves the main open question that was left open in these papers (\cite[Open problem 3.58]{H20Sha}). {The focus of \cite{LN,LNP1,LNP2}}  is the analysis of optimal shapes under convexity constraint. It was mainly observed that if one considers ${\Om}_0$ a solution of
$$\min\Big\{J(\Om), \;\;\Om\subset\R^2,\; \textrm{ convex},\; \Om\in\S_{ad}\Big\},$$
and if $J$ satisfies the {following {\it  concavity property}\footnote{{We call this property a "concavity property" because it implies $J''({\Om_0})({V,V})<0$ if $v$ has small support.}} (see \cite[Theorem 3]{LNP1}), 
\begin{equation}\label{e:suit-conv-prop}
J''(\Omega_0)({V,V})
{\leq}
-\alpha|v|_{H^\sigma(\partial\Omega_0)}^2 + 
\beta\|v\|_{H^s(\partial\Omega_0)}^2+
\gamma|v|_{H^{\sigma}(\partial\Omega_0)}\|v\|_{H^s(\partial\Omega_0)},
\end{equation}
with $0\leq s<\sigma=1$, $\alpha>0$, $\beta,\gamma\geq0$, 
${V}{(x)}=v{(x)}\frac{x}{|x|}$ { with } ${v\in W^{1,\infty}}(\partial\Omega_0)$,
}
then the set of admissible deformations of ${\Om}_0$ is of finite dimension (see also \cite[Theorem 4.8]{LNP2} for a similar result in dimension higher than 2).
In the case \eqref{e:Sad}, geometrically this means that the free boundary {$\partial{\Om}_0\cap (D_{2}\setminus{\overline{D_{1}}})$} is polygonal. A similar behavior was previously noticed for a problem linked to the famous Newton's problem of minimal resistance, see \cite{LP01New}, which was a big inspiration for our initial results.

{The aforementioned {\it concavity property} was formulated through second order shape derivatives of the shape functional. 
Let us quickly clarify the framework we consider in the whole paper (see \cite{HP2018} for more details):
we consider 
\begin{equation}\label{Omega}
\Om
 \;  {\rm open,\;  bounded},
\end{equation}
 Given a  shape functional $E(\cdot) :\mathcal{S}_{ad}\to \R$ defined on a family $\mathcal{S}_{ad}$ of admissible subsets of $\R^2$, we consider
$$\mathcal{E}:{W^{1,\infty}(\R^2,\R^2)}\to\R, \;\;
{\E(V):=E((\Id+V)(\Om))},$$
where $\Id:x\mapsto x$ { and $W^{1,\infty}(\R^2,\R^2)$ is endowed with its usual norm}.
Then, $E$ is said to be twice shape differentiable at $\Omega$  if and only if $\E$ is $2$ times Fr\'echet-differentiable at 
${V}=0$. In that case, $\E'(0)$ and $\E''(0)$ are respectively called the first and second order shape derivative of $E$ at $\Om$, and can also be denoted by $E'(\Om)$ and $E''(\Om)$. Their values at {${V}\in W^{1,\infty}(\R^2;\R^2)$ and $({V_{1},V_{2}})\in{(W^{1,\infty}(\R^2,\R^2))^2}$ are denoted by $E'(\Om)({V})$, $E''(\Om)({V_{1},V_{2}})$, and are called 
first, resp. second, derivative of $E$ at $\Omega$ in the direction ${V}$, resp. ${(V_{1},V_{2})}$}. In this context, it is well known that $\Vol, E_{f},\lambda_{1}$ are twice shape differentiable at any open set $\Om$
{(if $f$ is regular enough)}, and similarly for {the perimeter $P$} if $\Om$ is Lipschitz.
{Note that, roughly speaking, {an open set} is said {to be} Lipschitz if in the neighborhood of every boundary point the set is
below the graph of a certain Lipschitz function, and so its boundary is the graph of this function, see for example \cite{G85Ell}.
Note that  every open convex set is Lipschitz.

The typical example of energy $J$ {that we were able to consider in these previous works \cite{LN,LNP1,LNP2} is} 
\[J(\Om):=F(|\Om|,E_{f}(\Om),\lambda_{1}(\Om))-P(\Om),\]where  $F$ is a smooth function. We showed that $J$ satisfies the {\it concavity property} \eqref{e:suit-conv-prop}.
The difficulty was to study the second order shape derivatives of the involved functional at a set $\Om_{0}$ whose regularity is {only} the one given by the convexity of $\Om_{0}$. Indeed it is easy to verify that the function $\Om\mapsto -P(\Om)$ satisfies the {\it concavity property} \eqref{e:suit-conv-prop} {with $\sigma=1$}, and it {remains} to show that {the term
$\Om\mapsto F(|\Om|,E_{f}(\Om),\lambda_{1}(\Om))$ does not affect the concavity property of $\Om\mapsto-P(\Om)$ (it is a perturbation term)}. For more precise references,
\begin{enumerate}[label=\roman*)]
\item 
in \cite{LN} we only dealt with geometric functionals of the form $F(|\Om|)-P(\Om)$,
\item 
in \cite{LNP1} we showed that if $\Om$ is convex in $\R^2$, then 
\begin{equation}\label{e:E''}
|E_{f}''(\Om)({V,V})|
\leq 
C\|{V}\|^2_{H^{s}\cap L^\infty(\partial\Om)}, \;\;\;\;\;|\lambda_{1}''(\Om)({V,V})|\leq C\|{V}\|^2_{H^{s}\cap L^\infty(\partial\Om)},
\end{equation}
{with $s=1/2$}, which was enough to {conclude that every functional $J$ of the type 
$J(\Om)=F(|\Om|,E_{f}(\Om),\lambda_{1}(\Om))-P(\Om)$ satisfies the concavity property \eqref{e:suit-conv-prop}; 
note that {more generally} any functional
$J(\Om)={{G}(\Om)}-P(\Om)$ with $G''(\Om)$ satisfying
$|{G}''(\Om)({V,V})|\leq C\|{V}\|^2_{H^{s}\cap L^\infty(\partial\Om)}$ with {$s<\sigma$}, will still satisfy \eqref{e:suit-conv-prop}},
\item 
and in \cite{LNP2}, we improved and generalized the previous result, mainly showing that
if $\Om$ is convex in $\R^{{N}}$ then 
\begin{equation}\label{e:E''+}
|E_{f}''(\Om)({V,V})|\leq C\|{V}\|^2_{H^{1/2}(\partial\Om)}, \;\;\;\;\;|\lambda_{1}''(\Om)({V,V})|\leq C\|{V}\|^2_{H^{1/2}(\partial\Om)},
\end{equation}
where the generalization dealt with the dimension of the shapes and also the class of differential operators (uniformly elliptic).
\end{enumerate}
Moreover, 
{computations around smooth shapes led us to believe that} 
$\Om\mapsto -E_{f}(\Om)$ or $\Om\mapsto -\lambda_{1}(\Om)$ also satisfy a {concavity property similar to \eqref{e:suit-conv-prop} with $\sigma=1/2$}, so one could expect that these functionals could assume a similar role as
$\Om\mapsto -P(\Om)$ in the examples above: more precisely, this would mean that one could consider functionals of the type ${G}(\Om)-E_{f}(\Om)$ or ${G}(\Om)-\lambda_{1}(\Om)$, {with ${G}(\Om)$ a perturbation term with a second order shape derivative bounded in $\|\cdot\|_{H^s(\partial\Om)}$ {for some} $s<1/2$,  and still expect (minimizing) optimal shapes to be polygonal\footnote{{In problems \eqref{e:Pb1} to \eqref{e:Pb4},} the functionals we consider have $+E_f(\Om)$ or $+\lambda_1(\Om)$ terms, and we consider the associated maximization problems}}. 
In this paper we will consider only $F(\Omega)=|\Om|$ but the strategy can easily be adapted to other cases.
The main ingredient of proof of this result is an estimate {of the form}
\eqref{e:suit-conv-prop} with $\sigma=1/2$.
Its proof represents many additional technical difficulties, in particular estimates of the kind \eqref{e:E''+} are not sufficient. The main difficulty {is} related to the fact that $\Om$ is only assumed to be convex, so its regularity is a priori weak (and for good reasons, as we expect optimal shapes to be polygonal).
In this case, just computing $E_{f}''(\Om)$ or $\lambda_{1}''(\Om)$ is already challenging (see for example the new results in \cite{L2020}), and the usual way to write these shape derivatives involves integrations by part which are allowed only with more regularity. 

For this kind of problem {\eqref{e:Pb1} to \eqref{e:Pb4},}
only a weak result exists in \cite[Proposition 3.61]{H20Sha}, where 
it is shown (for problems \eqref{e:Pb2} and \eqref{e:Pb4}, but the result and the proof also applies to \eqref{e:Pb1} and \eqref{e:Pb3} if $f\in H^2_{loc}$)}
that the free boundary $\partial{\Om}_0\cap D_{1}\cap D_{2}$ cannot have a $C^2_{+}$ part in its boundary ($C^2_{+}$ means of class $C^2$ with positive (Gauss) curvature); however this result applies in any dimension.
{Therefore} this paper completes the result in \cite[Proposition 3.61]{H20Sha} and {we also apply our results } to new examples.

{\begin{remark}
Let us mention a similar problem in the literature where the maximization of $\lambda_{1}$ is dealt with: in \cite{BF16Bla} the authors are led to study
\begin{equation}\label{eq:bucfra}\max\Big\{\lambda_{1}(\Om)|\Om|, \;\;\Om\in\mathcal{O}\Big\}
\end{equation}
where $\mathcal{O}$ denotes the class of convex planar axisymmetric octagons having four vertices lying on the axes at the same distance, say 1. They show that the square (having vertices at $(\pm1,0)$, $(0,\pm1)$ is the unique solution to this problem.

This problem can be seen as a reverse Faber-Krahn inequality (in a specific class of domains), similarly to \eqref{e:Pb2}-\eqref{e:Pb4}; also the fact that an optimal shape would rather have four sides than eight is a similar behavior to our main result Theorem \ref{th:main} below. The fact that only a specific class of axisymmetric octagons is considered is mainly motivated by the fact that the authors were interested in showing a Mahler-type inequality for $\lambda_{1}$, and that solving \eqref{eq:bucfra} was sufficient for this purpose, see \cite[Proposition 10]{BF16Bla}. Nevertheless, as the authors mention, it would be interesting to consider more general reverse Faber-Krahn inequalities: for example, it is conjectured that the square is the unique (up to affine transformations) solution to
$$\sup\Big\{\inf_{T\in GL_{2}}\big[\lambda_{1}(T(\Om))|T(\Om)|\big],\; \Om\textrm{ nonempty open convex set in  such that }-\Om=\Om\Big\},$$
where the infinimum in $T$ helps bringing an affine transformation invariance to the functional, similarly to Ball's-reverse isoperimetric inequality, see \cite{B91Vol}. Even though it definitely requires some extra work, it is likely that the strategy from our paper could help proving that a solution to the previous problem must be a polygon.
\end{remark}}

\paragraph{Main results.}

Our main result states that the solutions to our problems have a polygonal free boundary. More precisely {we show the following result.}

\begin{theorem}\label{th:main}
 Let $D_{1}\subset D_{2}$ two open convex sets {in $\mathbb R^2$} \red{such that $\overline D_{1}\subset D_{2}$}, $\mu,{m_{0}}\in\R_{+}^*$. 
 If ${\Om}_0$ is a solution of 
 \eqref{e:Pb1}, \eqref{e:Pb2}, \eqref{e:Pb3} or  \eqref{e:Pb4},
where in the case of \eqref{e:Pb1} and \eqref{e:Pb3} we assume 
{\begin{equation}\label{eq:hypf}
f\in H^2_{loc}(\R^2), \;f\gneqq0\textrm{ or }-f\gneqq0,\textrm{ and }
|f|^\beta\textrm{ is concave for some  }\beta\geq1.
\end{equation}}
 Then
 $$
 \textrm{every connected component of }
{\partial{\Om}_0\cap (D_{2}\setminus{\overline{D_{1}}})}\textrm{ is polygonal}.
 $$
{Here, by ``polygonal`` we mean a continuous curve built of a finite number of segments.} 
\end{theorem}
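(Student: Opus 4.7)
The plan is to reduce all four problems to the minimization of a common functional and invoke the general mechanism developed in \cite{LNP1,LNP2}. For \eqref{e:Pb1} and \eqref{e:Pb2}, set $J(\Om):=-E_f(\Om)-\mu|\Om|$, respectively $J(\Om):=-\lambda_1(\Om)-\mu|\Om|$, so that ${\Om}_0$ is a minimizer of $J$ among convex subsets sandwiched between $D_1$ and $D_2$. For the volume-constrained problems \eqref{e:Pb3} and \eqref{e:Pb4}, a standard Lagrange multiplier argument using area-preserving deformations produces the same functionals with an a priori unknown multiplier $\mu\in\R$; testing the first-order optimality condition against a dilation of ${\Om}_0$ would yield $\mu>0$, putting us back in the previous framework. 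By \cite[Theorem 3]{LNP1}, it then suffices to verify that $J$ satisfies the concavity property \eqref{e:suit-conv-prop} at ${\Om}_0$ with $\sigma=1/2$. Since $|\cdot|''({\Om}_0)(V,V)=\int_{\partial{\Om}_0}\kappa v^2\,d\mathcal{H}^1\leq C\|v\|_{L^2(\partial{\Om}_0)}^2$ is absorbed into the $\|v\|_{H^s}^2$ term, the whole task reduces to establishing, for every $V(x)=v(x)x/|x|$ with $v\in W^{1,\infty}(\partial{\Om}_0)$ supported in the free boundary and some $s<1/2$, an estimate of the form
\[
E_f''({\Om}_0)(V,V)\;\geq\;\alpha\,|v|_{H^{1/2}(\partial{\Om}_0)}^2 -\beta\,\|v\|_{H^s(\partial{\Om}_0)}^2-\gamma\,|v|_{H^{1/2}(\partial{\Om}_0)}\|v\|_{H^s(\partial{\Om}_0)},
\]
together with its analogue for $\lambda_1''({\Om}_0)(V,V)$.

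For smooth shapes this inequality is heuristically transparent: one can write $E_f''({\Om}_0)(V,V)=\int_{{\Om}_0}|\nabla U'|^2+(\text{lower-order boundary terms})$, where $U'$ is the harmonic function in ${\Om}_0$ with boundary trace $-v\,\partial_n U$. The first-order optimality condition for \eqref{e:Pb1} reads $|\nabla U|^2\equiv 2\mu$ on the free boundary, so $\partial_n U$ is a non-zero constant there. The classical equivalence $\|\nabla U'\|_{L^2({\Om}_0)}\simeq\|U'\|_{H^{1/2}(\partial{\Om}_0)}$ then gives $\int_{{\Om}_0}|\nabla U'|^2\gtrsim|v|_{H^{1/2}}^2$, which produces the desired principal term. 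The case of $\lambda_1$ is structurally similar but involves extra contributions from the variation of $\lambda_1$ inside the equation for $U_1'$ and from the normalization $\int_{{\Om}_0}U_1^2=1$; these can be absorbed using the positivity of the Lagrange multiplier. The hypothesis \eqref{eq:hypf} on $f$ (and the classical Brascamp-Lieb log-concavity of $U_1$) would be used to ensure that $U$ and $U_1$, together with their gradients, carry the sign and regularity needed up to the convex Lipschitz boundary.

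The main obstacle is making these formal computations rigorous when ${\Om}_0$ is only convex, hence a priori merely Lipschitz and in fact expected to have corners. The bounds \eqref{e:E''+} from \cite{LNP2} provide two-sided $H^{1/2}$ control but are too coarse to display a sign, while the standard Hadamard-type derivations of $E_f''$ and $\lambda_1''$ rely on integrations by parts demanding $C^2$ regularity up to the boundary. The technical core of the proof must therefore (i) derive representations of $E_f''({\Om}_0)(V,V)$ and $\lambda_1''({\Om}_0)(V,V)$ valid on convex Lipschitz domains and isolating a non-negative principal part of $\int_{{\Om}_0}|\nabla U'|^2$ type, where $U'$ is defined via an appropriate weak formulation near non-smooth boundary points; (ii) bound the remaining boundary terms by a norm strictly weaker than $|v|_{H^{1/2}}^2$, which is realistic because such terms typically involve at most a single tangential derivative of $v$ and can be controlled through a harmonic extension argument relying on the uniform geometry of ${\Om}_0$; and (iii) treat the extra terms specific to $\lambda_1$, using that $\mu>0$ ensures the sign of the curvature contribution is compatible with the target inequality. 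Once the displayed estimate is secured, an application of \cite[Theorem 3]{LNP1} finishes the proof.
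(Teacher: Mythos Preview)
Your plan has a genuine gap: you propose to establish the full concavity estimate
\[
E_f''({\Om}_0)(V,V)\;\geq\;\alpha\,|v|_{H^{1/2}(\partial{\Om}_0)}^2 -\beta\,\|v\|_{H^s(\partial{\Om}_0)}^2-\gamma\,|v|_{H^{1/2}(\partial{\Om}_0)}\|v\|_{H^s(\partial{\Om}_0)}
\]
for \emph{all} $v\in W^{1,\infty}(\partial\Om_0)$ with small support, and then invoke \cite[Theorem~3]{LNP1}. The paper explicitly identifies this as an open problem (see the discussion following Theorem~\ref{th:l_1''(0)->convex}). What the paper actually proves is much weaker: $E_f''(\Om_0)(V,V)>0$ only for a \emph{specific} hat deformation $V$ associated with three nodes $x_1<x_2<x_3$ lying in $\supp(u_0'')$, and only under the integrated first-order condition~(ii) of Theorem~\ref{th:e_f''(0)->convex}. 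The contradiction argument then runs directly, without the general machinery of \cite{LNP1}: assuming the free boundary is not locally a segment, one constructs (Section~\ref{sect:perturbation}) convexity-preserving perturbations approximating such hat deformations in $H^1$, applies the second-order optimality condition to get $E_f''(\Om_0)(V,V)\le 0$, and contradicts Theorem~\ref{th:e_f''(0)->convex}.

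Two of your intermediate claims also fail on a merely convex $\Om_0$. First, you assert that the first-order condition gives $|\nabla U|^2\equiv 2\mu$ pointwise on the free boundary. Under a convexity constraint this is false: at a corner $\nabla U$ vanishes, and only integral conditions against affine test functions on subintervals $[x_{n+1},x_n]$ with $x_i\in\supp(u_0'')$ survive (Lemma~\ref{l:varphi}). Deriving even these requires the explicit construction of convexity-preserving perturbations in Proposition~\ref{p:varphi}. Second, you write $\Vol''(\Om_0)(V,V)=\int_{\partial\Om_0}\kappa v^2\le C\|v\|_{L^2}^2$; but on a convex domain the curvature $\kappa$ is a nonnegative Radon measure with possible atoms, so no such bound holds. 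The paper sidesteps this entirely by using deformations $V=\bar v z$ with $z$ a \emph{constant} vector, for which $\Vol''(\Om_0)(V,V)=0$ exactly (Proposition~\ref{prop:vol''}). These choices, together with the dropping of the nonnegative curvature term $\int_{\partial\Om}\mathcal H(\partial_\nu U)^2|V|^2$ in Theorem~\ref{th:e_f''(0)=}, are what make the argument go through without the full estimate you aim for.
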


\red{Since the contact set $\partial\Om_{0}\cap(\partial D_{2}\cup\partial D_{1})$ may not be empty, one cannot state without further assumption that the whole set $\Om_{0}$ is a polygon. Nevertheless, we can obtain the following nice result:\footnote{We thank the anonymous referee for suggesting such result.}
\begin{corollary}\label{cor:polygon}
With the same notations and assumptions as in Theorem \ref{th:main}, if furthermore $D_{1}$ and $D_{2}$ are convex polygons, then $\Om_{0}$ is a polygon.
\end{corollary}

\noindent{\bf Proof of Corollary \ref{cor:polygon}:}
Let us denote $\partial\Om_{0}^{in}=\partial\Om_{0}\cap (D_{2}\setminus{\overline{D_{1}}})$ the free boundary, and $\partial\Om_{0}^{out}=\partial\Om_{0}\cap(\partial D_{2}\cup \partial D_{1})$ the contact boundary. We first remark that if $P,Q\in \partial \Om_{0}^{out}$ belong to the same segment either in $\partial D_{1}$ or $\partial D_{2}$, then by convexity, we are in one of the following situations:
\begin{itemize}
\item the whole segment $[P,Q]$ is included in $\partial \Om_{0}^{out}$, or
\item $P$ and $Q$ are corners of $D_{1}$.
\end{itemize} 
It implies that the number of connected components of $\partial \Om_{0}^{out}$ is lower than the sum of the numbers of sides of $\partial D_{1}$ and $\partial D_{2}$.

As a consequence, $\partial\Om_{0}^{in}$ being the complement of $\partial\Om_{0}^{out}$ in $\partial\Om_{0}$, it must also have a finite number of connected components (to see this, one can use polar coordinates centered at a point $O$ lying inside of $D_{1}$ (if $D_{1}\neq \emptyset$) to that $\partial\Om_{0}=\{(\rho(\theta),\theta), \theta\in\T\}$ for some continuous function $\rho:\T\to(0,\infty)$, and realize that $\partial\Om_{0}^{in}$ and $\partial\Om_{0}^{out}$ are parametrized by two complement sets in $\T=\R/2\pi\Z$; if $D_{1}$ is empty, a similar argument works with $O$ lying inside of $\Om_{0}$).

Applying Theorem \ref{th:main}, one knows that each connected component of $\partial\Om_{0}^{in}$ is made of a finite number of segments. So in conclusion, the whole boundary $\partial\Om_{0}$ is made of a finite number of segments, wich concludes the proof.
\qed
}

{\begin{remark}
\red{Assumption \eqref{eq:hypf} allows the case $f=1$ which leads $E_{f}$ to be proportional to the classical torsional energy. This case has been extensively studied from the point of view of geometric inequalities.
It is not clear however whether assumption \eqref{eq:hypf} can be replaced by the much more general assumption $f\in L^2_{loc}(\R^2)$:
\begin{itemize}
\item the regularity assumption on $f$ is made so that $E_{f}$ is twice shape differentiable, and also to get enough regularity for $U$ solution to \eqref{eq:state}  in $\Omega_0$, which is used to apply \cite[Theorem 4.1]{K1985}. As suggested by one the referees, in order to weaken this hypothesis, we tried to apply Theorem \ref{th:main} to a sequence of regular $f_n$ 
converging to a less regular $f$, but it seems we would then need an estimate of the number of segments of each connected component of
the free boundary. But it is not clear how such an estimate would follow from our proof,
\item the sign and concavity assumption for $f$ are made also to use  \cite[Theorem 4.1]{K1985}, which shows that the $\eps$ level set $\{U>\eps\}$ of $U$ are convex. We actually only need this to be true for small values of $\eps$, but it is not clear whether this can be obtained with a weaker assumption.
\end{itemize} 
Notice that the same properties are true for $U_{1}$ even though in this case we do not need any extra assumption.}
\end{remark}

{\begin{remark}
Theorem \ref{th:main} is also valid if one only considers local solutions to  \eqref{e:Pb1}, \eqref{e:Pb2}, \eqref{e:Pb3} or  \eqref{e:Pb4}. Note that a weak version of this fact for \eqref{e:Pb4} when $\S_{ad}=\{\Om\subset\R^2\}$ (that is to say when $D_{1}=\emptyset$ and $D_{2}=\R^2$) has been used in \cite{FL21Bla}: more precisely, it was used that there is no planar $C^{1,1}$-domain that locally (say for the Hausdorff distance) maximizes $\lambda_{1}$ under area and convexity constraint. Note that we actually prove here a stronger statement, namely that any convex shape that is not a polygon cannot be a local maximizer of $\lambda_{1}$ under area and convexity constraint. As far as we know, existence of a local maximum for $\lambda_1$ 
under area and convexity constraints is an open problem in general (see \cite[Lemma 3.5]{FL21Bla} for a solution to the same question for the perimeter instead of $\lambda_{1}$).
\end{remark}}

\begin{remark}\label{r:Om0,exist}
It is classical that problems
\eqref{e:Pb1}, \eqref{e:Pb2}, \eqref{e:Pb3} and 
\eqref{e:Pb4}
always have a solution if $\S_{ad}$ is {nonempty and} compact for the Hausdorff convergence, see \cite{HP2018}. 
This is valid: 
\begin{itemize}
\item 
  for problem \eqref{e:Pb1} and \eqref{e:Pb2} if $D_{2}$ is bounded and $D_{1}$ is nonempty (if $D_{1}$ is empty, for the maximization of  \eqref{e:Pb2} one can consider the empty set to be a solution with infinite energy)
 \item 
 for problem  \eqref{e:Pb3} and \eqref{e:Pb4} if $D_{2}$ is bounded {and $m_{0}{\in (|D_{1}|,|D_{2}|)}$}: {indeed in that case, {even if $D_{1}=\emptyset$, }the volume constraint prevent minimizing sequences to collapse.}
\end{itemize}
\end{remark}

 \begin{remark}\label{r:questions}
 In order to better understand the qualitative properties of solutions  ${\Om}_0$ to 
 \eqref{e:Pb1} { to }\eqref{e:Pb4}, several questions remains. 
 \begin{itemize}
 \item 
 What does the contact sets $\partial{\Om}_0\cap\partial D_{1}$ or $\partial{\Om}_0\cap\partial D_{2}$ look like? Can one prove that for some values of the parameters, these sets are reduced to a finite number of points? (in which case the {whole} optimal shape ${\Om}_0$ is {an actual} polygon).
 \item 
For the penalized problems \eqref{e:Pb1} and \eqref{e:Pb2}, {can one prove} that for $\mu$ large enough the optimal shape is the same as in the limit $\mu\to+\infty$, which means ${\Om}_0=D_{2}$?
{Similarly} can one prove that for $\mu$ small enough, the optimal shapes for \eqref{e:Pb1}, \eqref{e:Pb2} are the same as when $\mu=0$, which means ${\Om}_0=D_{1}$?
 \item 
 Can we compute the number of sides in the polygonal curve  {$\partial{\Om}_0\cap (D_{2}\setminus{\overline{D_{1}}})$}? {How does it evolve with respect to the parameters $\mu, m_{0}$?}
 \end{itemize}
 Also, some numerical computations would help understanding these peculiar optimal shapes, as an analytical computation of optimal shapes seems out of reach (see \cite{BH12Opt} for some computation of optimal shapes for the {maximization of $\Om\mapsto P(\Om)-\mu|\Om|$}).
{These questions will be the topic of future investigations}.
 \end{remark}

{\paragraph{Strategy of proof and plan of the paper.}

In order to prove Theorem \ref{th:main}, we need some preliminary results that are interesting in themselves.
\begin{itemize}
\item 
In Section \ref{sect:derivatives}, we provide new formulas (see Theorems \ref{th:e_f''(0)=} and \ref{th:l_1''(0)=}) for the second order shape derivatives of $E_{f}$ and $\lambda_{1}$. Since we have to consider nonsmooth shapes, one cannot restrict ourselves to normal perturbations {even though they} usually lead to simpler computations. Moreover, integrations by parts need to be handled very carefully as the state functions $U$ and $U_{1}$ are not very smooth {(see Proposition \ref{p:reg(U)})}. Nevertheless, we are able to make appear several terms, one of which, {while it is} very difficult to estimate (because it involves second order derivatives of $U$ or $U_1$ and the curvature of the boundary), {happens to be} non-negative when the shape is convex. {This allows us} to drop it in the use of optimality conditions when proving Theorem \ref{th:main}. 
\item 
In Section \ref{sect:perturbation}, we construct new deformations of convex planar sets: 
{we use previous ideas from \cite[Theorem 4.1]{LP01New} and \cite[Theorem 2.1]{LN} and go deeper in the analysis to show that if $\Om$ is not a polygon, then one can build a perturbation of $\Om$ preserving the convexity 
{(see Definition \ref{def:convex})},
 and which is close to a ``hat'' deformation {(see Definition \ref{d:hat})}, see Proposition \ref{p:varphi}}. These {perturbations are crucial in controlling  the sign of $E_{f}''(\Om)$ and $\lambda_{1}''(\Om)$}, {see Theorem \ref{th:e_f''(0)->convex}}.
 
 \begin{definition}\label{def:convex}
Let $\Om$ be a convex set in $\R^2$ and $V\in W^{1,\infty}(\Om;\R^2)$. We say that $V$ preserves the convexity of $\Om$ if there exists $t_{0}>0$ such that $(\Id+tV)(\Om)$ is convex for every $t\in[-t_{0},t_{0}]$.
 \end{definition}

All along this paper we use hat functions and {hat perturbations}, which are introduced by this definition.
\begin{definition}\label{d:hat}
\begin{enumerate}
\item 
Let $I\subset \R$ be an interval and $x_1,x_2,x_3\in I$ with $x_1<x_2<x_3$. The function $\varphi\in W^{1,\infty}(I)$ is said to be the {\em hat function associated with the nodes $x_1,x_2,x_3$ }if
$$\varphi\equiv 0 \;in\; I\setminus(x_1,x_3),$$
 and
 \[
\varphi(x)=\left\{
 \begin{array}{l}
\displaystyle{ \frac{x-x_1}{x_2-x_1},\;\forall x\in [x_1,x_2],}
 \vspace*{2mm}
 \\
\displaystyle{\frac{x_3-x}{x_3-x_2},\;\forall x\in [x_2,x_3].}
 \end{array}
 \right.
 \]
{\item
Let $\Om$ an open convex set in $\R^2$, $\Gamma \subset \partial\Om$ and $u:I\to\R$ (with $I$ an interval) such that $\Gamma=\{(x,u(x)), x\in I\}$. Let $\varphi\in W^{1,\infty}(I)$ a hat function associated to three nodes $x_{1},x_{2},x_{3}\in I$. The function $v\in W^{1,\infty}(\partial\Om)$ is called the ``hat function on $\partial\Om$'' associated to $x_{1},x_{2},x_{3}$ (or to the points $p_{i}=(x_{i},u(x_{i}))$, $i\in\{1,2,3\}$) if
\begin{equation}\label{eq:hatdef}
\forall x\in I, \;v(x,u(x))=\varphi(x)
\;\;
\textrm{ and }
\;\;
\forall p\in \partial\Om\setminus{\Gamma},\; v(p)=0.
\end{equation}
In that case, given $z\in \R^2$ a fixed vector, we say that $V=vz$ is the hat deformation of $\partial\Om$ associated to $x_{1},x_{2},x_{3}$ in direction $z$.}
\end{enumerate}
\end{definition}
\item 
In Section \ref{ss:convexity-e''} we prove Theorem \ref{th:main} in the case of problem \eqref{e:Pb1} {and \eqref{e:Pb2}}. {We also prove } the following results, whose interest might go beyond its application to Theorem \ref{th:main}.

{The following theorem  involves quite technical assumptions, 
which roughly speaking are: (i) the perturbation is nearly normal and the boundary is flat enough;
(ii) some first order optimality conditions are satisfied; and
(iii) an assumption on $f$ required to ensure the convexity of level curves of $U$ and estimate the sign of a difficult term of $E_f''(\Om_0)(V,V)$.
}

{
\begin{theorem}\label{th:e_f''(0)->convex}
Let $\Om_{0}$ be a bounded convex open set in $\R^2$ such that with respect to a coordinate system with origin at 
$O\in\partial\Om_0$ we have $O=(0,0)$ and $\Omega_0\subset\{(x,y),\; y>0\}$.
Let also $u_0\in W^{1,\infty}(0,\sigma_0)$ with $\Gamma_0:=\{(x,u_0(x)),\, x\in(0,\sigma_0)\}\subset\partial\Om_0$,
$u(0)=0$ and $u_0'(0^+)=0$. Furthermore, we assume {\eqref{eq:hypf}} and:
\begin{enumerate}[label=(\roman*)]
\item {there exist three points $0<x_1<x_2<x_3<\sigma_0$ with:\\
\hspace*{10mm}
(i.1) $u_0''$ having Dirac masses at each of $x_i$, or \\
\hspace*{10mm}
(ii.2) each of $x_i$ is an accumulation point of ${\rm supp}(u_0'')$,
}
\item
there exists $\mu\in\mathbb R$ such that the following first order optimality conditions hold:
\begin{eqnarray*}
\int_{x_i}^{x_{i+1}}
\left[\frac{1}{2}(\partial_{\nu_0}{U_0})^2-\mu\right]\ell dx
&=&0,
\;\;
\mbox{$\forall \ell$ {affine} in $[x_i,x_{i+1}]$, $i=1,2$},
\end{eqnarray*}
where 
$U_0$ is the solution of \eqref{e:Ef} in $\Om_{0}$, and $(\partial_{\nu_0}{U_0})(x):=(\partial_{\nu_0}{U_0})(x,u_0(x))$, 
\end{enumerate}

Then if $x_3$ is small enough we have
\begin{eqnarray}
E_{f}''(\Om_{0})(V,V)
&\geq&
\int_{\Om_{0}}
|\nabla U_{0}'|^2 
+
\int_{\partial\Om_{0}}
\big(
f(\partial_{\nu_{0}} U_{0})(V\cdot {\nu_{0}})^2
\nonumber\\
&&
\hspace*{34mm}
+|\nabla U_{0}|^2 (V\cdot\tau_{0})(\partial_{s_0} V\cdot \nu_{0})
\big)
>
0,
\label{eq:convEf}
\end{eqnarray}
where $V\in W^{1,\infty}(\R^2;\R^2)$ is the extension given by Lemma \ref{l:Hv} of
the hat deformation associated to nodes $x_{1},x_{2},x_{3}$ in direction $(0,1)$ (see Definition \ref{d:hat}), $\nu_{0}$ is the unit exterior normal vector, $\tau_{0}=\nu_{0}^\perp$, $U_{0}'\in H^1(\Om_{0})$ is solution to \eqref{e:U'}
and $\partial_{s_0}$ denotes the derivative with respect to the arclength $s_0$ of $\partial\Om_0$.
\end{theorem}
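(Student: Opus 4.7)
The plan is to start from the explicit formula for $E_f''(\Om_0)(V,V)$ given by Theorem \ref{th:e_f''(0)=} and decompose its right-hand side into four kinds of contributions: (a) the volume term $\int_{\Om_0}|\nabla U_0'|^2$; (b) boundary integrals matching those on the right-hand side of \eqref{eq:convEf}; (c) residual boundary integrals of the form $\int_{x_i}^{x_{i+1}}[\tfrac12(\partial_{\nu_0}U_0)^2-\mu]\cdot(\text{affine factor in }x)\,dx$; and (d) a ``curvature-type'' correction produced by the integrations by parts needed to handle the low regularity of $\partial\Om_0$, in which the singular measure $u_0''$ is paired with quantities built from second derivatives of $U_0$ along the boundary. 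The main task is to control the signs of (c) and (d) in order to recover the desired lower bound.

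First, I would dispose of (d). Under the hypothesis \eqref{eq:hypf}, a Korevaar/Kennington/Lewis type argument guarantees that the super-level sets $\{U_0>\varepsilon\}$ are convex for $\varepsilon$ small. Combined with the convexity of $\Om_0$ itself, and with the fact that $V=(0,1)\varphi$ points in a single direction compatible with the convex geometry, this forces the curvature-type contribution to have a non-negative sign, so it can be dropped in the lower bound. Next I would use the first-order optimality condition (ii) to eliminate (c): since $\varphi$ is affine on each of $(x_1,x_2)$ and $(x_2,x_3)$, the factor multiplying $\tfrac12(\partial_{\nu_0}U_0)^2-\mu$ is itself affine on each sub-interval, up to a Lipschitz arc-length correction that further decomposes into an affine part plus a remainder quadratically small in $x_3$. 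Assumption (ii) kills exactly these integrals, and what remains is precisely the right-hand side of \eqref{eq:convEf}.

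The final step is to establish that this right-hand side is strictly positive when $x_3$ is small. I would argue by a scaling analysis with $L:=x_3\to 0$. The crucial property of the hat function $\varphi$ is the \emph{scale invariance} of its $H^{1/2}$-seminorm in one dimension, which yields
$$\int_{\Om_0}|\nabla U_0'|^2 \;\geq\; c\,\bigl|(V\cdot\nu_0)\partial_{\nu_0}U_0\bigr|_{H^{1/2}(\partial\Om_0)}^2 \;\geq\; c_0>0,$$
uniformly in $L$, where $c_0$ involves $|\partial_{\nu_0}U_0(O)|>0$ (guaranteed by $f\gneqq 0$ and the Hopf lemma). On the other hand, both boundary integrals on the right-hand side of \eqref{eq:convEf} are $O(L)$: the normal one because $\int\varphi^2\,ds = O(L)$, and the tangential one because $V\cdot\tau_0 = O(u_0')=o(1)$ on $\Gamma_0$ thanks to $u_0'(0^+)=0$. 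For $L$ small enough, the $H^{1/2}$-contribution strictly dominates, giving the desired strict positivity.

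The main obstacle I anticipate is step (d): rigorously justifying that the curvature-type term has the good sign on a merely convex $\Om_0$, especially in case (i.2) when the $x_i$ are only accumulation points of $\supp(u_0'')$ and no Dirac mass is present. This will likely require an approximation of $\Om_0$ by smooth convex domains (for which the classical second-derivative formulas apply), and a careful limiting procedure tracking how the convexity of the level curves of $U_0$ interacts with the sign of $u_0''$ and with the fixed vertical direction of $V$.
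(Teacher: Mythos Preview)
Your plan has the right skeleton but contains two genuine gaps in the ``strict positivity'' step, and a structural misunderstanding of where hypothesis (ii) enters.

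First, note that the \emph{first} inequality in \eqref{eq:convEf} is nothing but the statement of Theorem \ref{th:e_f''(0)=}: the curvature-type term is already dropped there, and no ``residual terms (c)'' of the form $\int[\tfrac12(\partial_{\nu_0}U_0)^2-\mu]\cdot(\text{affine})$ appear in the formula. So your steps (c) and (d) are not needed to obtain the lower bound; the entire content of the theorem is the strict positivity of the right-hand side. Hypothesis (ii) is not used to kill terms in the formula --- it is used inside the sign/size analysis of the two boundary integrals that are already there.

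The first gap is in your treatment of the tangential integral $\int_{\partial\Om_0}|\nabla U_0|^2(V\cdot\tau_0)(\partial_{s_0}V\cdot\nu_0)$. You bound it by $O(L)$ using $V\cdot\tau_0=O(u_0')=o(1)$, but you ignore the factor $\partial_{s_0}V\cdot\nu_0$, which carries $\varphi'$ and is of order $1/L_{\min}$ with $L_{\min}=\min(x_2-x_1,x_3-x_2)$. Since the nodes $x_1,x_2,x_3$ are prescribed by hypothesis (i) and the ratio $L/L_{\min}$ is not controlled, your crude bound gives only $o(1)\cdot L/L_{\min}$, which need not be small. The paper instead proves (Lemma \ref{l:...phi'phi>=0}) that this integral is \emph{nonnegative}: writing it as $-\int(\partial_{\nu_0}U_0)^2\frac{u_0'}{1+(u_0')^2}\varphi\varphi'\,dx$, one uses the monotonicity of $u_0'/(1+(u_0')^2)$ together with (ii) (which fixes $\int_{x_i}^{x_{i+1}}(\partial_{\nu_0}U_0)^2\varphi=\mu|x_{i+1}-x_i|$) to show the contributions on $(x_1,x_2)$ and $(x_2,x_3)$ combine with the right sign.

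The second gap is your lower bound $\int_{\Om_0}|\nabla U_0'|^2\geq c_0>0$ via ``$|\partial_{\nu_0}U_0(O)|>0$ by Hopf''. On a merely convex domain there is no interior ball condition, so Hopf's lemma does not apply; more fundamentally, $(\partial_{\nu_0}U_0)^2$ is only $H^{1/2}$ on $\partial\Om_0$ and need not be continuous, so pointwise evaluation at $O$ is meaningless. The paper replaces this by two ingredients: Lemma \ref{l:int|DU|phi^2<} uses the VMO property of $(\partial_{\nu_0}U_0)^2\in H^{1/2}$ together with (ii) (which forces its average on $(x_1,x_3)$ to equal $\mu>0$) to get $\int|\partial_{\nu_0}U_0|\,|V\cdot\nu_0|^2\leq C\int|\partial_{\nu_0}U_0|^2|V\cdot\nu_0|^2=C\|U_0'\|_{L^2(\partial\Om_0)}^2$; then Lemma \ref{l:L2<H1/2} gives the scaled Poincar\'e inequality $\|U_0'\|_{L^2(\partial\Om_0)}\leq C\sqrt{x_3}\,|U_0'|_{H^{1/2}(\partial\Om_0)}$. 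Combining, the right-hand side of \eqref{eq:convEf} is bounded below by $(1-Cx_3)|U_0'|_{H^{1/2}}^2>0$ for $x_3$ small. This is the correct substitute for your scaling argument.
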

}
{
\begin{theorem}\label{th:l_1''(0)->convex}
{Let us assume the same setting as} in Theorem \ref{th:e_f''(0)->convex}{, and where $U_{0}$ is replaced by $U_{1}$ (solution to \eqref{e:l1}) in condition} (ii).
Then 
\begin{eqnarray}\nonumber
\lambda_1''(\Om_{0})(V,V)\geq \int_{\Om_{0}}
|\nabla U_{1}'|^2\!\!\!\!
&+&
\!\!\!\!\!
\int_{\partial\Om_{0}}
\Big(
{\frac{1}{2}\lambda_{1}(\Om_{0}) (U_{1}{\partial_{\nu_{0}} U_{1})}(V\cdot\nu_{0})^2}\\
&&\hspace{8mm}
+
(\partial_{\nu_{0}}U_{1})^2 (V\cdot\tau_{0})(\partial_{s} V\cdot \nu_{0})
\Big)
>0.
\label{eq:convl1}
\end{eqnarray}
{where $U_{1}'\in H^1(\Om_{0})$ is solution to \eqref{e:l',U'}.}
\end{theorem}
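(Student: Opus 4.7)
The strategy is to mirror the scheme used for Theorem \ref{th:e_f''(0)->convex}, adapting each step to the eigenvalue setting. The starting point is the explicit formula for $\lambda_1''(\Om_0)(V,V)$ provided by Theorem \ref{th:l_1''(0)=}, from which we aim to extract the lower bound \eqref{eq:convl1} and then establish its strict positivity for $x_3$ small.

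First, I would decompose $\lambda_1''(\Om_0)(V,V)$ into (a) the dominant bulk term $\int_{\Om_0}|\nabla U_1'|^2$, (b) the two boundary integrals appearing in the right-hand side of \eqref{eq:convl1}, and (c) a residual term involving the tangential second derivative of $U_1$ weighted by the (distributional) curvature of $\partial\Om_0$. Since $\Om_0$ is convex with $U_1>0$ inside and $U_1=0$ on $\partial\Om_0$, the super-level sets $\{U_1>t\}$ are convex for every $t>0$ by the classical Brascamp--Lieb log-concavity theorem applied to the first Dirichlet eigenfunction on a convex planar domain. In contrast with the $E_f$-case (where an assumption of the type \eqref{eq:hypf} was imposed precisely to ensure the same property for $U_0$), this level-set convexity holds here \emph{unconditionally}, in agreement with the remark following Theorem \ref{th:main}. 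Combined with the convexity of $\Om_0$, this gives a definite sign to the residual curvature term and allows us to drop it from the lower bound, exactly as in the energy case.

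Next, I would use the first-order optimality condition (ii) rewritten with $U_1$ in place of $U_0$, together with the normalization $\int_{\Om_0}U_1^2=1$ and the Hadamard formula
\[
\lambda_1'(\Om_0)(V)=-\int_{\partial\Om_0}(\partial_{\nu_0}U_1)^2\,V\cdot\nu_0,
\]
to eliminate the terms linear in $V$ in the expansion and to reorganize the remaining contributions in the form of \eqref{eq:convl1}. The manipulation is formally parallel to the $E_f$-case with $f$ replaced by $\lambda_1 U_1$, the new ingredient being the shape dependence of $\lambda_1$ itself, which is captured through the system \eqref{e:l',U'} satisfied by $U_1'$. The first boundary integrand $\tfrac{1}{2}\lambda_1(U_1\partial_{\nu_0}U_1)(V\cdot\nu_0)^2$ is displayed to highlight the structural parallel with the energy case; since $U_1\equiv 0$ on $\partial\Om_0$, it contributes nothing pointwise, so the essential boundary contribution really comes from the cross term.

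The main obstacle, and the reason for the smallness of $x_3$, will be the strict positivity of the right-hand side. The bulk term $\int_{\Om_0}|\nabla U_1'|^2$ is strictly positive because $U_1'$ solves \eqref{e:l',U'} with a nontrivial boundary datum: by Hopf's lemma, $\partial_{\nu_0}U_1<0$ on $\Gamma_0$, and $V\cdot\nu_0$ inherits the nonzero profile of the hat function. In the regime $x_3\to 0^+$, the hypothesis $u_0'(0^+)=0$ forces $\nu_0\to(0,1)$ and $\tau_0\to(1,0)$ uniformly on $\Gamma_0$, so $V\cdot\tau_0$ is of order $\|u_0'\|_{L^\infty(0,x_3)}$ and the cross term $(\partial_{\nu_0}U_1)^2(V\cdot\tau_0)(\partial_s V\cdot\nu_0)$ becomes subdominant with respect to the bulk term after rescaling the hat to fixed $L^\infty$-norm. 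The delicate point — as in the energy case — is that $\partial_s V\cdot\nu_0$ concentrates at the three nodes $x_1,x_2,x_3$ so that the cross term must be controlled by a trace argument against $\int|\nabla U_1'|^2$ rather than by a naive $L^\infty$ bound; this is precisely the place where the quantitative flatness coming from $u_0'(0^+)=0$ is used. A coercivity argument of this type then closes the proof.
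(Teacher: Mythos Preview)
Your derivation of the first inequality in \eqref{eq:convl1} is on track: it follows directly from Theorem \ref{th:l_1''(0)=}, and you are right that the level-set convexity of $U_1$ (via Brascamp--Lieb, no extra hypothesis needed) is what makes the curvature remainder nonnegative. You also correctly observe that the middle boundary term $\tfrac12\lambda_1(U_1\partial_{\nu_0}U_1)(V\cdot\nu_0)^2$ vanishes identically since $U_1=0$ on $\partial\Om_0$.

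However, your plan for the strict positivity has a genuine gap. You propose to treat the cross term $\int_{\partial\Om_0}(\partial_{\nu_0}U_1)^2(V\cdot\tau_0)(\partial_sV\cdot\nu_0)$ as \emph{subdominant} to $\int_{\Om_0}|\nabla U_1'|^2$, via flatness of $\Gamma_0$ and a trace/coercivity argument. This is not how the paper proceeds, and your sketch is both imprecise and contains an error: $\partial_sV\cdot\nu_0$ does \emph{not} ``concentrate at the three nodes'' --- since $V$ is a hat deformation, $\partial_s v$ is piecewise constant, not a sum of Dirac masses. More importantly, you never articulate how condition (ii) enters your positivity argument, and without it the comparison of the cross term to the bulk term does not close with a uniform constant.

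The paper's route is cleaner and avoids these difficulties entirely: the cross term is shown to be \emph{nonnegative}, not merely small. This is the content of Lemma \ref{l:...phi'phi>=0} (applied verbatim with $U_1$ in place of $U_0$): using condition (ii) one has $\int_{x_i}^{x_{i+1}}(\partial_{\nu_0}U_1)^2\varphi\,dx=\mu|x_{i+1}-x_i|$ on each subinterval, and combining this with the monotonicity of $u_0'/(1+(u_0')^2)$ on $(0,x_3)$ (valid once $|u_0'|\le 1$, i.e.\ for $x_3$ small) yields
\[
\int_{\partial\Om_0}(\partial_{\nu_0}U_1)^2(z\cdot\tau_0)(z\cdot\nu_0)\,v\,\partial_{s_0}v\,ds_0
\;\ge\;\mu\Big\llbracket\tfrac{u_0'}{1+(u_0')^2}\Big\rrbracket_{x_2}\;\ge\;0.
\]
Since the middle boundary term vanishes, the right-hand side of \eqref{eq:convl1} is therefore bounded below by $\int_{\Om_0}|\nabla U_1'|^2$, which is strictly positive because $U_1'=-(\partial_{\nu_0}U_1)(V\cdot\nu_0)$ is nonconstant on $\partial\Om_0$. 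In particular the $\lambda_1$ case is \emph{simpler} than the $E_f$ case: neither Lemma \ref{l:int|DU|phi^2<} nor Lemma \ref{l:L2<H1/2} is actually needed here, so the coercivity machinery you allude to is unnecessary once the sign of the cross term is recognised.
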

}

These results can be seen as a weak convexity result for $E_{f}$ {and $\lambda_1$}. 
{Note that the first part of inequalities \eqref{eq:convEf} and \eqref{eq:convl1} are valid in much more generality, see Theorems \ref{th:e_f''(0)=} and \ref{th:l_1''(0)=}.}
{However,} it remains an open problem whether {$E_{f}''(\Om)(V,V)>0$ and $\lambda_{1}''(\Om)(V,V)>0$ are} valid under weaker assumption, for example  assuming only that $V$ has small support (this is the case if $\Om_{0}$ is smooth, 
{see \cite[Proposition 3.61]{H20Sha}}). 

{In order to prove Theorem \ref{th:main} in the case of problem \eqref{e:Pb1} (and similarly for \eqref{e:Pb2}), we deduce from Theorem \ref{th:e_f''(0)->convex}} that if $\Om_{0}$ is a solution to \eqref{e:Pb1} {then every connected component of $\partial\Om_{0}\cap D_{2}\setminus\overline{D_{1}}$ is polygonal: {assuming by contradiction that it is not the case, } 
{we use} the results from Section \ref{sect:perturbation} {and} the second order optimality condition to show $E_{f}''(\Om_{0})(V,V)\leq 0$ for $V$ a certain hat function, while at the same time Theorem \ref{th:e_f''(0)->convex} {applies, leading to a contradiction}.}

\item
Finally, in Section \ref{sect:other} we show how to adapt the previous strategy to problems  \eqref{e:Pb3} and \eqref{e:Pb4}. {The main difficulty is to handle the volume constraint for problems \eqref{e:Pb3} and \eqref{e:Pb4}, and show that the key lemmas \ref{l:...phi'phi>=0}, \ref
{l:int|DU|phi^2<},  \ref{l:varphi} hold for certain volume preserving perturbations. This works with some adaptations, one of which requires a classical regularity result for free boundary problems.}
\end{itemize}

We conclude this section with the following lemma, which gives an extension result used often all along this paper.
This result is classical but we give its proof for the sake of completeness. {Given $\Om_{0}$ an bounded, open and nonempty convex set, we can choose an origin $O\in\Om_{0}$ and consider $r_{0}:\T\to(0,+\infty)$ the radial function of $\Om_{0}$ (where $\T:=\R/2\pi\Z$), that is to say
$$\forall \theta\in\T, \;\;r_{0}(\theta)=\sup\big\{r>0, re^{i\theta}\in \Om_{0}\big\}.$$
This allows us to conveniently define $W^{1,\infty}(\partial\Om_{0})$ in the following way:
\begin{equation}\label{e:|W1inf(pO)|}
W^{1,\infty}(\partial\Om_0)
=
\Big\{
v:\partial\Om_0\mapsto\mathbb R,
\;
h_v(\theta):=v(r_0(\theta)\cos\theta,r_0(\theta)\sin\theta)\in W^{1,\infty}(\T)\Big\}
\end{equation}
endowed with the norm
$$\|v\|_{W^{1,\infty}(\partial\Om_0)}:=\|h_v\|_{W^{1,\infty}(\T)}.$$

}

\begin{lemma}\label{l:Hv}
({\bf An extension operator})
Let $\Om_0\subset\R^2$ be bounded and convex.
Then there exists a linear continuous operator (extension)
$H:v\in W^{1,\infty}(\partial\Om_0)\mapsto \bar{v}\in W^{1,\infty}({\mathbb R^2})$ such that
$\bar{v}=v$ on $\partial\Om_0$, 
{ and $\partial_r\nabla \overline{v}\in L^\infty(\R^2;\R^2)$, where $\partial_r$ is the radial derivative.}
\end{lemma}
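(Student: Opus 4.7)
The plan is to build $\bar v$ by a suitable radial extrapolation of the trace $v$, using the polar parametrization of $\partial\Om_0$. Fix an origin $O$ in the interior of $\Om_0$ (which is nonempty since $\Om_0$ is a bounded open convex set), and let $r_0:\T\to(0,+\infty)$ be the radial function of $\Om_0$. A classical property of convex bodies with $0$ in their interior is that $r_0\in W^{1,\infty}(\T)$ and $0<r_{\min}\le r_0\le r_{\max}<\infty$. Given $v\in W^{1,\infty}(\partial\Om_0)$, the function $h_v(\theta)=v(r_0(\theta)\cos\theta,r_0(\theta)\sin\theta)$ belongs to $W^{1,\infty}(\T)$ by definition \eqref{e:|W1inf(pO)|}. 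Choose once and for all a cutoff $\eta\in C^\infty_c((0,+\infty))$ with $\eta(1)=1$; the exact support will be chosen below.

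Now define, in polar coordinates $(r,\theta)$ on $\R^2\setminus\{O\}$,
\[
\bar v(r\cos\theta,r\sin\theta):=\eta\!\left(\frac{r}{r_0(\theta)}\right)h_v(\theta),
\]
and extend $\bar v$ by $0$ at $O$ and wherever $\eta(r/r_0(\theta))=0$. Choose $\mathrm{supp}(\eta)\subset[a,b]$ with $0<a<1<b<+\infty$, so that $\bar v\equiv 0$ for $r\le a\,r_{\min}$ and for $r\ge b\,r_{\max}$. In particular $\bar v\in C_c(\R^2)$, and clearly $\bar v=h_v=v$ on $\partial\Om_0$ since $\eta(1)=1$. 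Linearity and continuity of the map $v\mapsto\bar v$ in the appropriate norms follow directly from the explicit formula.

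To check the regularity, write the polar gradient: since $\hat r$ and $\hat\theta$ are independent of $r$,
\[
\partial_r\bar v=\frac{1}{r_0(\theta)}\,\eta'\!\left(\frac{r}{r_0(\theta)}\right)h_v(\theta),\qquad
\partial_\theta\bar v=-\frac{r\,r_0'(\theta)}{r_0(\theta)^2}\eta'\!\left(\frac{r}{r_0(\theta)}\right)h_v(\theta)+\eta\!\left(\frac{r}{r_0(\theta)}\right)h_v'(\theta),
\]
both of which are bounded on $\R^2$ since $\eta,\eta'$ have compact support in $(0,+\infty)$, $r$ is bounded on $\mathrm{supp}(\bar v)$, $r_0$ is bounded above and away from zero, and $r_0',h_v,h_v'\in L^\infty$. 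Then
\[
\nabla\bar v=(\partial_r\bar v)\hat r+\frac{1}{r}(\partial_\theta\bar v)\hat\theta,\qquad
\partial_r\nabla\bar v=(\partial_r^2\bar v)\hat r+\Bigl(-\frac{1}{r^2}\partial_\theta\bar v+\frac{1}{r}\partial_r\partial_\theta\bar v\Bigr)\hat\theta.
\]
A direct computation shows that $\partial_r^2\bar v$ and $\partial_r\partial_\theta\bar v$ are products of bounded quantities (values of $\eta,\eta',\eta''$ at $r/r_0(\theta)$, together with powers of $r_0$, $r_0'$ and $h_v,h_v'$), and the factors $r^{-1},r^{-2}$ are harmless on $\mathrm{supp}(\bar v)$ because $r\ge a\,r_{\min}>0$ there. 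Hence $\nabla\bar v\in L^\infty(\R^2;\R^2)$ and $\partial_r\nabla\bar v\in L^\infty(\R^2;\R^2)$, which finishes the construction.

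The only step with any subtlety is checking that $r_0$ is Lipschitz on $\T$, which is where convexity of $\Om_0$ is used; everything else is a routine computation in polar coordinates, with the role of the lower bound $a\,r_{\min}>0$ on the radial support of $\bar v$ being to neutralize the $1/r$ and $1/r^2$ factors that appear in the polar expression of $\partial_r\nabla\bar v$.
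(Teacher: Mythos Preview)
Your proof is correct and follows the same construction as the paper: you define the identical extension $\bar v(r,\theta)=\eta(r/r_0(\theta))h_v(\theta)$ and verify the same regularity claims. The only cosmetic difference is that you carry out the gradient computations in the polar frame $(\hat r,\hat\theta)$, whereas the paper writes out $\partial_x\bar v$ and $\partial_y\bar v$ in Cartesian coordinates; both reach the same conclusion by the same mechanism (the $r$-dependence of $\bar v$ enters only through $\eta$ and its derivatives, and the support condition keeps $r$ bounded away from zero).
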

{Note that the particular property of this extension (namely $\partial_r\nabla \overline{v}\in L^\infty(\R^2;\R^2)$) will be used for computing
one term in $E_{f}''(\Om_{0})$, namely the limit of $K_4(\eps)$ in the proof of Theorem \ref{th:e_f''(0)=}}.
\begin{proof}
Let $r_0\in W^{1,\infty}(\T)$ be the {radial} function of $\Omega_0$.
For $v\in W^{1,\infty}(\partial\Om_0)$ we consider its $W^{1,\infty}({\mathbb R^2})$ extension $\overline{v}$ defined by
\begin{equation}
\bar{v}(x,y)=(Hv)(x,y)
:=
\eta\left(\frac{r}{r_0(\theta)}\right)h_v(\theta),\quad\forall 
(x,y)\in \R^2,\;\; x=r\cos\theta,\; y=r\sin\theta,
\label{e:Hv}
\end{equation}
where $\eta\in {\cal D}(\R)$ with $0\leq\eta\leq 1$, $\eta\equiv 1$ 
in a neighborhood of $1$ and $\eta \equiv 0$ {elsewhere}. 
Note that if $(x,y)=(r\cos\theta,r\sin\theta)$, {from \eqref{e:Hv} and  using
{\[
 \partial_{x}r=\cos \theta,\;\; \partial_{x}\theta=-\sin\theta/r,\quad
 \partial_{y}r=\sin\theta,\;\; \partial_{y}\theta=\cos\theta/r,
\]}}
we get
\begin{eqnarray}
\partial_x \overline{v}(x,y)
&=&
\eta'\left(\frac{r}{r_0(\theta)}\right)\frac{1}{r_0(\theta)}h_v(\theta)\cos\theta
\nonumber\\
&&+
\left(
\eta'\left(\frac{r}{r_0(\theta)}\right)\frac{r_0'(\theta)}{r_0(\theta)^2}h_v(\theta)
-
\frac{1}{r}\eta\left(\frac{r}{r_0(\theta)}\right)
h'_v(\theta)
\right)
\sin\theta
\in L^\infty(\R^2),
\label{e:DxHv}
\end{eqnarray}
and
\begin{eqnarray}
\partial_y \overline{v}(x,y)
&=&
\eta'\left(\frac{r}{r_0(\theta)}\right)\frac{1}{r_0(\theta)}h_v(\theta)\sin\theta
\nonumber\\
&-&
\left(
\eta'\left(\frac{r}{r_0(\theta)}\right)\frac{r_0'(\theta)}{r_0(\theta)^2}h_v(\theta)
-
\frac{1}{r}\eta\left(\frac{r}{r_0(\theta)}\right)
h'_v(\theta)
\right)
\cos\theta
\in L^\infty(\R^2).
\label{e:DyHv}
\end{eqnarray}
So $v\in W^{1,\infty}(\partial\Om_0)\mapsto \overline{v}\in {W^{1,\infty}(\R^2)}$ 
is a well-defined continuous extension operator.

Also $\partial_r \nabla v\in W^{1,\infty}(\R^2;\R^2)$ follows easily from {\eqref{e:DxHv}-\eqref{e:DyHv},} and noticing that the only functions
in these formulas depending on $r$ are $\eta$ and $\eta'$.

\end{proof}
}

{\section{New computations of second order shape derivatives}\label{sect:derivatives}}

{In the smooth case, it is usual to express first and second order shape derivatives as boundary integrals, in terms of $V\cdot\nu$  (where $\nu$ denotes the unit exterior normal vector to $\partial\Om$) and $V_{\tau}=V-(V\cdot\nu)\nu$, see for example \cite[Chapter 5]{HP2018}. This is due to the so-called Hadamard structure theorem (see \cite{NovruziPierre,DL}), and also to the fact that the signs of the different terms appear more naturally in {terms of $V\cdot\nu$ and $V_\tau$}. In particular, it is a general fact that restricted to normal deformations (i.e. such that $V_{\tau}=0$), second order shape derivatives are symmetric quadratic forms in terms of $(V\cdot\nu)$.

 In our nonsmooth framework, we face two difficulties. {{The first is to} express these derivatives as boundary integrals, since the expression of these integrals require integration by parts and} high regularity of the state function $U$ and $U_{1}$ (see also \cite{L2020}). Second, {as $\nu$ is not smooth enough, in order to prove Theorem \ref{th:main} we will not use normal deformations. Using some new computation techniques we will find useful expressions for $E_f''(\Om)(V,V)$ and
 $\lambda_1''(\Om)(V,V)$ for $V\in W^{1,\infty}(\mathbb R^2,\mathbb R^2)$.}\\

\subsection{Shape derivatives of the volume}

Let us start with the derivative of the volume functional.
We will use the following notations: if $(a,b)\in\R^2$, then $(a,b)^{\perp}:=(-b,a)$ (counterclockwise rotation with angle $\pi/2$). In particular, $\tau=\nu^\perp$ is the tangent vector to $\partial\Om$. {We also denote {$\nabla^\perp\varphi{=(\partial_1^\perp\varphi,\partial_2^\perp\varphi)}=(-\partial_{2}\varphi,\partial_{1}\varphi)$} for $\varphi\in H^1(\Om)$.}
Moreover, for $v\in W^{1,\infty}(\partial\Om)$, we denote 
{$\partial_s v:=\nabla\overline{v}\cdot\tau$  the derivative with respect to the arclength 
in $\partial\Omega$, where $\overline{v}$ is any 
$W^{1,\infty}(\mathbb R^2)$-extension of $v$, {such as the one in Lemma \ref{l:Hv}}.}

\begin{proposition}\label{prop:vol''}
Let $\Om$ be a Lipschitz set in $\R^2$ and {$V=(V_1,V_2)\in W^{1,\infty}({\R^2};\R^2)$}.
Then 
\begin{eqnarray}
{\Vol''(\Om)(V,V)}
&=&
\int_{\partial\Om}
{(\partial_s V\cdot V^\perp)}.
\label{e:m''(0)}
\end{eqnarray}
If in particular 
 $V=vz$, where $v\in W^{1,\infty}(\R^2)$, 
  and $z\in W^{1,\infty}(\mathbb R^2;\mathbb R^2)$ then
\begin{equation}\label{e:m''(0)bis}
{\Vol''(\Om)(V,V)}
=
\int_{\partial\Om}
 {(\partial_s z\cdot z^\perp)}v^2.
 \end{equation}
\end{proposition}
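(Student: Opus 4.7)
The starting point is the change of variables formula: for $V\in W^{1,\infty}(\R^2;\R^2)$ small enough, $\Id+V$ is a bi-Lipschitz diffeomorphism with $\det(I+DV)>0$ a.e., so that
\[
\Vol((\Id+V)(\Om))=\int_\Om\det(I+DV)\,dx.
\]
In dimension two the identity $\det(I+DV)=1+\div V+\det(DV)$ holds pointwise a.e.; it is a genuine polynomial identity in the entries of $DV$. The map $V\mapsto\Vol((\Id+V)(\Om))$ is therefore a \emph{polynomial} of degree two in $V$, and identifying the quadratic term of its Taylor expansion at $0$ gives directly
\[
\Vol''(\Om)(V,V)=2\int_\Om\det(DV).
\]

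The core of the proof is then to rewrite this bulk integral as a boundary integral of the form $\partial_s V\cdot V^\perp$. The plan is to observe that $2\det(DV)$ is a divergence: direct computation shows
\[
\div\bigl(V_2\nabla^\perp V_1-V_1\nabla^\perp V_2\bigr)=2\det(DV),
\]
where all second derivatives cancel. Applying the divergence theorem (valid on a Lipschitz set $\Om$ for a field in $L^\infty$ with $L^\infty$ divergence; the routine regularization of $V$ by convolution gives a smooth sequence converging to $V$ weak-$\ast$ in $W^{1,\infty}$, so one can pass to the limit in each term) yields
\[
2\int_\Om\det(DV)=\int_{\partial\Om}\bigl(V_2\nabla^\perp V_1-V_1\nabla^\perp V_2\bigr)\cdot\nu\,d\sigma.
\]
It then remains to identify the boundary integrand. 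Using $\nabla^\perp f=(-\partial_2 f,\partial_1 f)$, one computes that for every $i\in\{1,2\}$, $V_2\partial_i V_1-V_1\partial_i V_2=-(\partial_i V)\cdot V^\perp$ (this is just the definition of $V^\perp=(-V_2,V_1)$). Combining this with $\nu_1=\tau_2$, $\nu_2=-\tau_1$ rewrites the integrand as $V^\perp\cdot(\tau_1\partial_1 V+\tau_2\partial_2 V)=V^\perp\cdot\partial_s V$, proving \eqref{e:m''(0)}.

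For the special case $V=vz$, the plan is a short product-rule computation on $\partial\Om$: from $(vz)^\perp=v z^\perp$ and $\partial_s(vz)=(\partial_s v)z+v\partial_s z$,
\[
\partial_s(vz)\cdot(vz)^\perp=v(\partial_s v)\,z\cdot z^\perp+v^2\,\partial_s z\cdot z^\perp=v^2\,\partial_s z\cdot z^\perp,
\]
using $z\cdot z^\perp=0$, which yields \eqref{e:m''(0)bis}.

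The main (and essentially only) obstacle is the regularity used to integrate by parts. Since $V\in W^{1,\infty}$ the vector field $V_2\nabla^\perp V_1-V_1\nabla^\perp V_2$ lies only in $L^\infty$, so a direct application of the divergence theorem is not immediate; however both sides are continuous in $V$ for the $W^{1,p}$ topology for any finite $p$, so smoothing $V$ by convolution (or simply using the exact polynomial identity for smooth fields and passing to the limit) resolves this point cleanly on the Lipschitz set $\Om$.
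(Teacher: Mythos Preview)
Your proof is correct and follows essentially the same approach as the paper: compute $\Vol''(\Om)(V,V)=2\int_\Om\det(DV)$ from the polynomial expansion of $\det(I+tDV)$, write $2\det(DV)$ as the divergence of $V_2\nabla^\perp V_1-V_1\nabla^\perp V_2$, apply the divergence theorem on the Lipschitz set, and simplify the boundary integrand to $\partial_s V\cdot V^\perp$. The only cosmetic differences are that the paper cites a reference for the integration by parts in $H^1$ rather than invoking mollification, and for \eqref{e:m''(0)bis} the paper works with the expression $V_1\partial_s V_2-V_2\partial_s V_1$ instead of using the product rule and $z\cdot z^\perp=0$ directly.
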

\begin{remark}
If $\Om$ is smooth enough, one can choose {$V$ {a Lipschitz extension of} $v\nu$}. Then $z^\perp=\tau$ and classically 
$\partial_{s}\nu=\mathcal{H}\tau$, where $\mathcal{H}$ is the curvature.
Therefore \eqref{e:m''(0)bis} {leads to} the usual formula
\[\Vol''(\Om)(V,V)=\int_{\partial\Om}
\mathcal{H}v^2\]
\end{remark}

\begin{proof}
From $m(t):=\Vol((\Id+tV)(\Om))=\int_{\Om}\det(\Id+tDV)dx=\int_{\Om}(1+t(\nabla\cdot V) + t^2\det(DV))dx$  one gets easily
\begin{eqnarray*}
m''(0)
&=&
2\int_{\Om}{\rm det}({D} V) 
= 
2\int_{\Om}\partial_1V_1\partial_2V_2-\partial_1V_2\partial_2V_1
\\
&=& 
2\int_{\Om}\nabla V_2\cdot\nabla^\perp V_1.
\end{eqnarray*}
Note that for $A,B\in H^1(\Om)$ we have 
$\nabla A\cdot\nabla^\perp B
=\nabla\cdot(A\nabla^\perp B)
=-\nabla\cdot(B\nabla^\perp A)\in L^2(\Om)$. These equalities can be proven by taking a regularizing sequence in $H^1(\Om)$ for $A$ and $B$ and passing {to the} limit. Then using the integration by part, see \cite[Th. 2.5 and (2.17)]{GR88FEMNS}
\begin{eqnarray*}
m''(0)
&=&
\int_{\Om}
(\nabla\cdot(V_2\nabla^\perp V_1)
-
\nabla\cdot(V_1\nabla^\perp V_2))
\\
&=&
\lara{V_2(\nabla^\perp V_1\cdot\nu) - V_1(\nabla^\perp V_2\cdot\nu),1}_{H^{-1/2}(\partial\Om)\times H^{1/2}(\partial\Om)}
\\
&=&
\int_{\partial\Om}
(V_2(\nabla^\perp V_1\cdot\nu) - V_1(\nabla^\perp V_2\cdot\nu))
\\
&=&
\int_{\partial\Om}
V_1\partial_s V_2 - V_2\partial_s V_1
\\
&=&
\int_{\partial\Om}
\partial_s V\cdot V^\perp,
\end{eqnarray*}
which proves \eqref{e:m''(0)}. If $V=v z$ then 
\begin{eqnarray*}
m''(0)
&=&
\int_{\partial\Om}
(z_1\partial_sz_2-z_2\partial_sz_1) v^2
=
\int_{\partial\Om}
(\partial_s z\cdot z^\perp) v^2
,
\end{eqnarray*}
which proves \eqref{e:m''(0)bis}.
\end{proof}

\subsection{Shape derivatives of Dirichlet energy}

{
First, we recall the following classical result for the solution $U$ of \eqref{e:Ef} (see \cite{Kadlec} for a proof (see also \cite{LNP1})).
\begin{proposition}\label{p:reg(U)}
For every $\Omega\subset\R^2$ {open bounded convex set, the unique solution $U$ of \eqref{e:Ef} satisfies} $U\in W^{1,\infty}(\Om)\cap H^2(\Omega)$.
\end{proposition}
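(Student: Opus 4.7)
The statement contains two regularity claims, $U\in H^2(\Om)$ and $U\in W^{1,\infty}(\Om)$, which I would treat separately. Existence and uniqueness of $U\in H^1_0(\Om)$ solving $-\Delta U=f$ is standard (Lax--Milgram), so the content is in the additional regularity.

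\emph{Step 1: $H^2$ regularity (Kadlec's argument).} I would approximate $\Om$ by a sequence $\Om_n$ of smooth bounded convex domains converging to $\Om$ in Hausdorff distance --- for instance by mollifying the signed distance function (or taking $\Om+\frac{1}{n}B$ and smoothing), in a way that preserves convexity. On each smooth $\Om_n$, the solution $U_n\in H^1_0(\Om_n)$ of $-\Delta U_n = f$ lies in $H^2(\Om_n)$ by classical elliptic regularity. Integrating by parts twice yields the Reilly-type identity
\[
\int_{\Om_n}(\Delta U_n)^2 \;=\; \int_{\Om_n}|D^2 U_n|^2 \;+\; \int_{\partial\Om_n}\mathcal{H}_n\,(\partial_{\nu_n} U_n)^2,
\]
where $\mathcal{H}_n$ is the curvature of $\partial\Om_n$. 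Since $\Om_n$ is convex, $\mathcal{H}_n\geq0$, so the boundary term is nonnegative and one obtains the uniform estimate $\|D^2 U_n\|_{L^2(\Om_n)}\leq \|f\|_{L^2(\R^2)}$. Combined with the Poincaré inequality and weak compactness in $H^2$, a subsequence converges weakly to some $\widetilde{U}\in H^2(\Om)\cap H^1_0(\Om)$; by uniqueness $\widetilde{U}=U$, hence $U\in H^2(\Om)$.

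\emph{Step 2: $W^{1,\infty}$ regularity.} In 2D, Sobolev embedding does not give $H^2\hookrightarrow W^{1,\infty}$, so additional input is needed. Convexity of $\Om$ supplies it through a barrier: at each $x_0\in\partial\Om$, a supporting hyperplane and a half-plane comparison function control $|U|$ linearly in the distance to the boundary, yielding $|U(x)|\leq C\,\mathrm{dist}(x,\partial\Om)$ with $C$ depending on $\|f\|_{L^2}$ and $\mathrm{diam}(\Om)$. Combined with interior gradient estimates inherited from the $H^2$-regularity of Step~1, this gives $\nabla U\in L^\infty(\Om)$, uniformly in $\Om$.

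\emph{Main obstacle.} The delicate step is the Lipschitz bound of Step~2: dimension two with $f\in L^2$ is borderline for analytic embeddings, and the geometric input from convexity (supporting hyperplanes, nonnegative curvature) is what closes the gap. Step~1 is the more classical half and essentially reduces, after an approximation and passage-to-the-limit argument, to the Reilly identity and the sign of $\mathcal{H}_n$.
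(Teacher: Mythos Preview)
The paper gives no argument of its own; it cites Kadlec and an earlier paper by the authors. Your Step~1 is exactly Kadlec's proof of the $H^2$ bound on convex domains, so that half matches.

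Step~2 has a real gap. You claim that ``interior gradient estimates inherited from the $H^2$-regularity'' plus the barrier bound $|U|\le C\,\mathrm{dist}(\cdot,\partial\Om)$ give $\nabla U\in L^\infty$. But in two dimensions $H^2$ does not embed in $W^{1,\infty}$, and the interior estimate for $-\Delta U=f$ on $B_r(x)\subset\Om$ reads $|\nabla U(x)|\lesssim r^{-1}\sup_{B_r(x)}|U|+r^{1-2/p}\|f\|_{L^p(B_r(x))}$; with $p=2$ the second term does not decay and nothing closes. In fact the $W^{1,\infty}$ conclusion is \emph{false} under $f\in L^2$ alone: on the unit disk, $f(x)=|x|^{-1}(\log|x|^{-1})^{-3/4}\cos(\arg x)\,\1_{\{|x|<1/2\}}$ is in $L^2$, yet a Newtonian-potential computation gives $|\nabla U(x)|\sim(\log|x|^{-1})^{1/4}\to\infty$ near the origin; convexity cannot help, since the disk is convex. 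What makes the proposition usable in the paper is that every subsequent application is under hypothesis~\eqref{eq:hypf}, i.e.\ $f\in H^2_{loc}\subset C^0$, hence $f$ locally bounded; with $f\in L^\infty$ (or any $L^p$, $p>2$) your barrier-plus-interior-estimate scheme does work. So the proposition should be read with that extra hypothesis on $f$, and under it your sketch is correct.
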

{This means that boundary second order derivatives of $U$, which appear on the second order shape derivative of $E_f$,
are not well-defined in general even in the case when $\Omega$ is $C^2$ and makes difficult 
the computation of $E_f''(\Om)$ in terms of boundary integrals.}
The following result gives a formula for $E_{f}''(\Om)$, which, due to the limited regularity of $U$, is quite technical to obtain.}
Nevertheless, it allows us to identify several terms in $E_{f}''(\Om)$, for which we will be able to identify signs. 
It happens to be crucial to prove \eqref{eq:convEf} in Theorem \ref{th:e_f''(0)->convex}.

\begin{theorem}\label{th:e_f''(0)=}
Let $\Omega\subset\R^2$ be a bounded open  {convex} set and
for $v=(v_1,v_2)\in W^{1,\infty}(\partial\Om;\R^2)$ set 
$V=(Hv_1,Hv_2)\in W^{1,\infty}(\R^2;\R^2)$ as given by Lemma \ref{l:Hv}. 
Assume \eqref{eq:hypf}, 
and {$U_0\in W^{1,\infty}(\Om_0)\cap H^2(\Om_0)$} is the solution of \eqref{e:Ef} in $\Om_0$.
Then $E_{f}$ is twice shape differentiable at $\Om$ and
\begin{equation}\label{e:e_f''(0),main}
E_{f}''(\Om)(V,V)
\geq \int_{\Om}
|\nabla U'|^2 
+
\int_{\partial\Om}
\left(
f(\partial_{\nu} U)(V\cdot {\nu})^2
+
|\nabla U|^2 (V\cdot\tau)(\partial_{s} V\cdot \nu)
\right),
\end{equation}
where $\nu$ is the exterior unit normal vector on $\partial\Om$, $\tau=\nu^\perp$,
$\partial_{s}(\cdot)=\nabla(\cdot)\cdot\tau$, 
$U'\in H^1(\Om)$ solves (in the variational sense)
\begin{equation}
\left\{
\begin{array}{cccc}
-\Delta U' &=&0\;\; &\mbox{ in }\;\;\Om,\\
U' &=& -\nabla U\cdot V\;\;&\mbox{ on }\;\;\partial\Om.
\end{array}
\right.
\label{e:U'}
\end{equation}
\end{theorem}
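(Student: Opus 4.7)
The plan is to compute $E_{f}''(\Om)(V,V)$ by differentiating $E_{f}((\Id+tV)(\Om)) = -\tfrac{1}{2}\int_{(\Id+tV)(\Om)} f U_{t}$ twice at $t=0$, after pulling back to $\Om$ through the change of variables $x = (\Id+tV)(y)$. The formal Lagrangian computation produces the main interior contribution $\int_{\Om}|\nabla U'|^2$ (with $U'$ the shape derivative solving \eqref{e:U'}) together with boundary corrections that in a smooth setting would involve traces of $\nabla^2 U$, of $f(\partial_{\nu}U)$, and of the curvature of $\partial\Om$. The issue, and the whole difficulty of the statement, is that in our Lipschitz-convex framework none of these quantities are a priori defined: by Proposition \ref{p:reg(U)}, $U \in H^2(\Om)\cap W^{1,\infty}(\Om)$ only, and $\partial\Om$ carries no curvature in the classical sense.

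To circumvent this, I would localise the whole computation to the sub-level sets $\Om_{\eps} := \{U > \eps\}$. Using the sign assumption in \eqref{eq:hypf}, say $f\gneqq 0$ so that $U>0$ in $\Om$ by the maximum principle, and using the concavity of $|f|^{\beta}$, the classical Korevaar--Kennington type result yields that the level sets $\{U=\eps\}$ are smooth and convex for small $\eps > 0$. On each $\Om_{\eps}$ all integrations by parts are legitimate, and one can write $E_{f}''(\Om)(V,V) = \lim_{\eps\to 0^+}\sum_{i=1}^{4} K_{i}(\eps)$ where the $K_{i}(\eps)$ are explicit integrals on $\Om_{\eps}$ or $\partial\Om_{\eps}$. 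The terms $K_{1}, K_{2}, K_{3}$ should converge to the three integrals on the right-hand side of \eqref{e:e_f''(0),main}, while $K_{4}(\eps)$ will consist of a curvature-weighted integral over $\partial\Om_{\eps}$; by convexity of the level curves of $U$, the curvature of $\partial\Om_{\eps}$ is non-negative, hence $K_{4}(\eps)\geq 0$. Passing to the limit and \emph{discarding} $K_{4}$ yields the inequality \eqref{e:e_f''(0),main} in place of an equality, which is exactly the form of the statement to be proved.

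The main obstacle will be the rigorous passage to the limit $\eps\to 0^+$ for each $K_{i}(\eps)$, as both the integration domain and the normal vector depend on $\eps$, and second-order derivatives of $U$ appear in several terms. The technique I intend to use is to replace normal second derivatives of $U$ on $\partial\Om_{\eps}$ by tangential ones plus $-f$ via $\Delta U = -f$, which trades an inaccessible pointwise quantity against boundary data that behave well in the limit. A change-of-variables from $\partial\Om_{\eps}$ to $\partial\Om$ then reduces everything to integrals on the fixed boundary; this step is legitimised by the Hopf boundary point lemma, which guarantees $\partial_{\nu}U$ does not vanish on $\partial\Om$. Finally, the specific structural property of the extension $V=(Hv_{1},Hv_{2})$ given by Lemma \ref{l:Hv}, namely $\partial_{r}\nabla\bar v \in L^{\infty}(\R^2)$, will be instrumental in obtaining uniform $\eps$-bounds for the $K_{4}(\eps)$ term so that its sign is preserved in the limit. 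The combination of these three ingredients (algebraic trading of the second normal derivative, Hopf non-degeneracy, and the radial control of the extension) is where I expect the bulk of the technical work to lie.
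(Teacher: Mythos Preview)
Your plan coincides with the paper's proof: start from the known formula $E_f''(\Om)(V,V)=\tfrac12\int_{\partial\Om}f(\partial_\nu U)(V\cdot\nu)^2-\tfrac12\int_\Om fU''$, approximate $\Om$ by the smooth convex sublevels $\Om_\eps=\{U>\eps\}$ (convex by the Korevaar--Kennington argument under \eqref{eq:hypf}), expand $U''=\hat U''-2\nabla U'\cdot V-V\cdot D^2U\cdot V$ on $\partial\Om_\eps$, trade $\partial_{\nu_\eps\nu_\eps}U$ against $-f-\mathcal H_\eps\,\partial_{\nu_\eps}U$, and isolate a curvature term $\int_{\partial\Om_\eps}\mathcal H_\eps|\nabla U|^2|V|^2\ge0$ that is dropped.

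One point in your outline is misattributed and would cause trouble in execution. The special property $\partial_r\nabla\bar v\in L^\infty$ of the extension from Lemma~\ref{l:Hv} is \emph{not} used on the curvature term (your $K_4$): that term is manifestly nonnegative on each $\partial\Om_\eps$ by convexity, and nothing further is required of it once the remaining terms are shown to converge. The extension property is needed for the \emph{tangential} term $\int_{\partial\Om_\eps}|\nabla U|^2(V\cdot\tau_\eps)(\partial_{s_\eps}V\cdot\nu_\eps)$ (your $K_3$), because rewriting it as a domain integral via the divergence theorem would produce second derivatives of $V$, which are unavailable. Instead one parametrises $\partial\Om_\eps$ radially and writes the difference of the integrals over $\partial\Om_\eps$ and $\partial\Om$ as an integral over the thin annulus $\Om\setminus\Om_\eps$ of $\partial_r$ of the integrand; it is here that $\partial_r\nabla V\in L^\infty$ guarantees an $L^2$ bound allowing the limit $\eps\to0$. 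The non-degeneracy you invoke is handled in the paper not through Hopf but through the uniform bound on $r_\eps'$ coming from the fact that all $\Om_\eps$ are convex and contain a fixed ball, which yields $|\nabla U|/|\partial_\varrho U|$ bounded on $\partial\Om_\eps$.
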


The proof of this result is given below, but first we give some remarks to understand better its significance.
{
\begin{remark}\label{rk:K3} This result will be sufficient for our purpose, which is to prove Theorem \ref{th:main}. But as we will detail in an appendix, we are able to get a complete formula for $E_{f}''(\Om)$:
\begin{enumerate}[label=(\roman*)]
\item 
In the case $\Omega$ is {$C^{2,\sigma}$}, for any $\sigma\in(0,1)$, we can actually prove (see Proposition \ref{p:E_f''-complete})
\begin{eqnarray}
\hspace*{-6mm}
E_{f}''(\Om)(V,V)
\!\!\!\!\!&=&\!\!\!\!\!
\int_{\Om}
|\nabla U'|^2 
\nonumber\\
\hspace*{-6mm}
\!\!\!\!\!&&\!\!\!\!\!+
\int_{\partial\Om}
\left(
f(\partial_{\nu} U)(V\cdot {\nu})^2
+
\frac{1}{2}
{\cal H}(\partial_{\nu} U)^2|V|^2
+
|\nabla U|^2 (V\cdot\tau)(\partial_{s} V\cdot \nu)
\right)\label{eq:Ef''}
\end{eqnarray}
which, to the best of our knowledge was not known. Note that when $V$ is normal on $\partial\Om$, we retrieve the usual well-known formula, see for example \cite{HP2018}.
\item
When $\Om$ is not {smooth}, it is not easy to make sense of the term 
$\int_{\partial\Om}{\cal H}(\partial_{\nu} U)^2|V|^2$ {as it involves the product of ${\cal H}$ with $(\partial_\nu U)^2$,
which at a corner for example, are respectively unbounded and zero}.
It is worth noticing that we are actually able to give sense to this term {when $\Omega$ is convex} (see the appendix for more details): more precisely, we prove that the following limit exists
\begin{equation}\label{eq:weakH}
\lim_{\eps\to 0}\int_{\partial\Om_{\eps}} {{\cal H}_\eps} (\partial_{\nu_\eps} U)^2 |V|^2
\end{equation}
where {$\Om_{\eps}=\{|U|>\eps\}$},
$\nu_\eps$ is the exterior unit normal vector to 
$\partial\Om_\eps$ 
{(we restrict ourselves to values of $\eps$ so that $\Om_{\eps}$ is smooth)} and {$\mathcal{H}_\eps$} denotes the curvature of $\partial\Om_{\eps}$ which is defined in the classical sense. Defining $\int_{\partial\Om}{\cal H}(\partial_{\nu} U)^2|V|^2$ as this limit, formula \eqref{eq:Ef''} remains valid.
\item {From {\eqref{eq:weakH}} it is clear that the term $\int_{\partial\Om}{\cal H}(\partial_{\nu} U)^2|V|^2$
 is nonnegative, which explains \eqref{e:e_f''(0),main}}.
\end{enumerate}
\end{remark}}

In order to prove Theorem \ref{th:e_f''(0)=}, we will proceed in several steps. First, in Proposition \ref{p:e_f'(0),e_f''(0)} we recall usual formulas for first and second order shape derivatives, and then we proceed to the proof of Theorem \ref{th:e_f''(0)=}.

\begin{proposition}\label{p:e_f'(0),e_f''(0)}
Let $\Om$ be an open bounded Lipschitz subset of $\R^2$, $f\in H^2_{loc}(\R^2)$ and $V\in W^{1,\infty}(\R^2;\mathbb R^2)$. 
{For $t\in\R$, $|t|$ small, let $U_t$ be the solution of \eqref{e:Ef} in $(I+tV)(\Om)$ and $\hat{U}_t:=U_t\circ(\Id+tV)$. 
Then $t\mapsto E_f((Id+tV)(\Om))$ and $t\mapsto \hat{U}_t\in H^1_{0}(\Om)$ are $2$-times differentiable at $0$ and}
\begin{eqnarray}
{E_{f}'(\Om)(V)}&=&
-\frac{1}{2}
\int_{\partial\Om}{|\nabla U|^2}(V\cdot\nu),
\label{e:e_f'}
\end{eqnarray}
and if furthermore $U\in H^2(\Om)$ we have 
\begin{eqnarray}
{E_{f}''(\Om)(V,V)}
&=&
\frac{1}{2}
\int_{\partial\Om}
f({\partial_{\nu}U})(V\cdot \nu)^2
-
\frac{1}{2}
\int_{\Omega}fU''dx.
\label{e:e_f''}
\end{eqnarray}
Here 
{$U'\in H^1(\Om)$ solves \eqref{e:U'} and satisfies
\begin{eqnarray}
U'=\hat{U}'-\nabla U\cdot V\;\; \mbox{in}\;\; \Om, \label{e:U'=}
\end{eqnarray}
and $U''\in L^2(\Om)$  is given by
\begin{eqnarray}
-\Delta U''&=&0\;\; \mbox{in}\;\; {{\cal D}'(\Om)},\quad  
U''=\hat{U}''-2(\nabla U'\cdot V) - (V\cdot D^2U\cdot V)\;\; \mbox{in}\;\; \Om, \label{e:U''=}
\end{eqnarray}
where $\hat{U}'\in H^1_{0}(\Om)$, resp. $\hat{U}''\in H^1_{0}(\Om)$, are the  first, resp. second, order derivative 
of  $t\mapsto \hat{U}_t$ at $t=0$.}
\end{proposition}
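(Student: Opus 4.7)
The plan is to reduce everything to the fixed domain $\Om$ via the pullback, differentiate the resulting family of linear variational problems, and then convert volume expressions into boundary integrals via integration by parts. Set $\Phi_t:=\Id+tV$, $J_t:=\det D\Phi_t$, $A_t:=J_t\,D\Phi_t^{-1}D\Phi_t^{-T}$ and $g_t:=J_t(f\circ\Phi_t)$. Then $\hat{U}_t\in H^1_0(\Om)$ is the unique variational solution of $\int_\Om A_t\nabla\hat{U}_t\cdot\nabla\varphi\,dx=\int_\Om g_t\varphi\,dx$ for all $\varphi\in H^1_0(\Om)$. The map $t\mapsto A_t$ is polynomial with values in $L^\infty(\Om;\R^{2\times 2})$, and $t\mapsto g_t$ is $C^2$ with values in $L^2(\Om)$ (using $f\in H^2_{loc}(\R^2)$ and the bi-Lipschitz character of $\Phi_t$ for small $|t|$). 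Together with the uniform coercivity of $A_t$, the implicit function theorem applied to $(t,\hat{U})\mapsto -\nabla\cdot(A_t\nabla\hat{U})-g_t$ from $\R\times H^1_0(\Om)$ to $H^{-1}(\Om)$ yields that $t\mapsto\hat{U}_t\in H^1_0(\Om)$ is $C^2$ near $0$; this defines $\hat{U}',\hat{U}''\in H^1_0(\Om)$.

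The identifications $U'=\hat{U}'-\nabla U\cdot V$ and $U''=\hat{U}''-2(\nabla U'\cdot V)-(V\cdot D^2U\cdot V)$ come from the pointwise chain rule: for $x$ in the interior of $\Om$, $\Phi_t(x)\in\Om_t$ for $|t|$ small and $U_t(x)=\hat{U}_t(\Phi_t^{-1}(x))$, so the Taylor expansion in $t$ at $0$ gives both formulas. The interior harmonicity $-\Delta U'=0$ and $-\Delta U''=0$ in $\mathcal{D}'(\Om)$ follows by differentiating $-\Delta U_t=f$ (independent of $t$ on any fixed interior neighborhood for small $t$) once and twice against a test function. The boundary trace $U'=-\nabla U\cdot V$ on $\partial\Om$ follows from $\hat{U}_t|_{\partial\Om}\equiv 0$. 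The regularity $U''\in L^2(\Om)$ is a consequence of $\hat{U}''\in H^1_0(\Om)$, $\nabla U'\in L^2(\Om)$, and (once $U\in H^2(\Om)$) $V\cdot D^2U\cdot V\in L^2(\Om)$.

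For the shape derivatives of the energy, the starting point is $E_f(\Om_t)=-\tfrac12\int_{\Om_t}fU_t=-\tfrac12\int_\Om(f\circ\Phi_t)\hat{U}_tJ_t$, which is now manifestly $C^2$ in $t$. Differentiating once at $0$ produces $-\tfrac12\int_\Om[\hat{U}'f+U\,\nabla\cdot(fV)]$; the second term integrates out by $U|_{\partial\Om}=0$. Substituting $\hat{U}'=U'+\nabla U\cdot V$ and integrating by parts using $-\Delta U=f$, $-\Delta U'=0$, and the fact that $\nabla U=(\partial_\nu U)\nu$ on $\partial\Om$ (tangential derivatives of $U$ vanish there), one obtains the boundary formula $-\tfrac12\int_{\partial\Om}|\nabla U|^2(V\cdot\nu)$. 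Differentiating the pullback twice and performing the same manipulations — substituting the formulas for $\hat{U}'$ and $\hat{U}''$, then using $-\Delta U=f$ and $-\Delta U'=0$ to collapse volume contributions — yields the second derivative formula, with $-\tfrac12\int_\Om fU''$ absorbing all interior residues and the leftover boundary term reading $\tfrac12\int_{\partial\Om}f(\partial_\nu U)(V\cdot\nu)^2$.

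The main obstacle is the bookkeeping of integrations by parts under only $U\in H^2(\Om)$ (which is the regularity available from Kadlec's theorem for convex $\Om$, see Proposition \ref{p:reg(U)}). Since $D^2U$ only lies in $L^2(\Om)$, the product $V\cdot D^2U\cdot V$ must be handled as an $L^2(\Om)$ function rather than pointwise, and every boundary integral must be justified in the appropriate Sobolev trace space. No regularity of $\partial\Om$ beyond Lipschitz is needed for the pullback construction and the identification of $\hat{U}'$, $\hat{U}''$, $U'$, $U''$; the hypothesis $U\in H^2(\Om)$ intervenes only to make sense of the boundary integrals in \eqref{e:e_f'} and \eqref{e:e_f''}.
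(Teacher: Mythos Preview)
Your approach is correct and is essentially the standard one that the paper defers to: the paper does not actually prove this proposition but cites \cite{HP2018} for the differentiability of $t\mapsto\hat{U}_t$ and \cite{LNP1} (Propositions 3, Lemmas 3 and 5) for \eqref{e:e_f'}, \eqref{e:U'=}, \eqref{e:e_f''}, and \eqref{e:U''=}; those references carry out exactly the pullback-and-implicit-function-theorem argument you sketch.

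Two small points of imprecision. First, when you write that ``the second term integrates out by $U|_{\partial\Om}=0$'', the term $\int_\Om U\,\nabla\cdot(fV)$ does not vanish: integration by parts turns it into $-\int_\Om f(\nabla U\cdot V)$, which then combines with the $-\tfrac12\int_\Om f\hat{U}'$ term after substituting $\hat{U}'=U'+\nabla U\cdot V$. Second, the hypothesis $U\in H^2(\Om)$ is needed not only to interpret the boundary integral in \eqref{e:e_f''} but already to define $U''$ via \eqref{e:U''=}, since $V\cdot D^2U\cdot V$ requires $D^2U\in L^2(\Om)$; note also that the proposition only imposes $U\in H^2(\Om)$ for \eqref{e:e_f''}, not for \eqref{e:e_f'}. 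These are minor wording issues and do not affect the validity of your argument.
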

{\bf Proof}.
{
See for example \cite{HP2018} for the proof of shape differentiability for $E_{f}((Id+t)(\Om))$ and $\hat{U}_t$. The proof of \eqref{e:e_f'} follows from \cite[Proposition 3]{LNP1}, 
\eqref{e:e_f''} and \eqref{e:U''=} follow from \cite[Lemma 5]{LNP1} and
\eqref{e:U'=} follows from  \cite[Lemma 3]{LNP1}.}
\hfill$\Box$
\\

\noindent\textbf{Proof of Theorem \ref{th:e_f''(0)=}.}
{We will consider the case $f\gneqq0$ and $f^\beta$ concave. 
The other case is proved similarly by considering $-f$ instead of $f$.}
Note that from $f\in H^2_{loc}(\mathbb R^2)$ we have 
$U\in H^4_{loc}(\Om)\cap C^{2,\sigma}(\Omega)$, for every $\sigma\in(0,1)$. 
Furthermore, $U', U''\in C^\infty(\Om)$.
From \eqref{e:e_f''} we have
\begin{eqnarray}
{E_{f}''(\Om)(V,V)}
&=&
\frac{1}{2}
\left(
\int_{\partial\Om}
f(\partial_{\nu}U)(V\cdot \nu)^2 ds
+J
\right),\quad
J=
-
\int_{\Om}fU''dx.
\label{e:e_D''=..+J}
\end{eqnarray}
For the estimation of $J$, we will use the sets $\Om_\eps=[{U}>\eps\}$.
{As $f\gneqq0$, $U>0$ in $\Omega$ and as $f^\beta$ is assumed to be concave,  \cite[Theorem 4.1]{K1985} shows that $U^{{\frac{\beta}{1+2\beta}}}$
is concave.
Combined with Sard's Theorem and implicit function Theorem, we conclude that $\Om_\eps$ are $C^{2,\sigma}$-convex domains for almost all $\eps$. Moreover, we easily notice that} $\lim_{\eps\to 0} \mathbbmss{1}_{\Omega_\eps}=1$  in $L^1(\Om)$
so that
\[
J=
-\lim_{\eps\to 0} \int_{\Om_\eps} U''f
=
\lim_{\eps\to 0} \int_{\Om_\eps} U''\Delta U.
\]
Using \eqref{e:U''=} we get 
\begin{eqnarray}
J
&=&
\lim_{\eps\to 0} 
\int_{\Om_\eps}
U''\Delta U
\\
&=&
\lim_{\eps\to 0} 
\int_{\partial\Om_\eps}
(U''(\nabla U\cdot\nu_\eps)-U(\nabla U''\cdot\nu_\eps))
+
\int_{\Om_\eps}
U\Delta U''
\nonumber\\
&=&
\lim_{\eps\to 0} 
\int_{\partial\Om_\eps}
U''(\nabla U\cdot\nu_\eps)
\label{e:like-lambda}
\\
&=&
\lim_{\eps\to 0} 
\int_{\partial\Omega_\eps}
(\hat{U}''-2(\nabla U'\cdot V)-(V\cdot D^2U\cdot V))(\nabla U\cdot \nu_\eps)
\nonumber\\
&=&
\lim_{\eps\to 0} 
(J_1(\eps)+J_2(\eps)+J_3(\eps)),
\label{e:J}
\end{eqnarray}
where $\nu_\eps$ is the exterior normal unit vector on $\partial\Om_\eps$.
{For $J_{1}$, we proceed as in \cite[Proof of Proposition 6]{LNP1}: as $\hat{U}''\in H^1_{0}(\Om)$ one can write
\begin{equation}
J_1(\eps) 
 =
 \int_{\Om_\eps}
 \nabla\cdot(\hat{U}''\nabla U)
 \mathop{\longrightarrow}_{\eps\to 0}
 \int_{\Om}
 \nabla\cdot(\hat{U}''\nabla U)
=
 \int_{\partial\Om}
 \hat{U}''(\nabla U\cdot \nu)
 =
0.
\label{e:J1}
\end{equation}}
For $J_2(\eps)$ we decompose $\nabla U'$ in normal and tangential parts as follows
\begin{eqnarray}
J_2(\eps)
&=&
-2
\int_{\partial\Om_\eps}
(\nabla U'\cdot\nu_\eps)(V\cdot \nu_\eps)(\nabla U\cdot\nu_\eps) 
-
2\int_{\partial\Om_\eps}
(\nabla U'\cdot\tau_\eps)(V\cdot\tau_\eps)(\nabla U\cdot\nu_\eps) 
\nonumber\\
&=&
J_{21}(\eps)+J_{22}(\eps).
\label{e:J21+J22}
\end{eqnarray}
{For $J_{21}(\eps)$ we use the divergence theorem, $\Delta U'=0$ and $\hat{U}=0$ on $\partial\Om$ as follows
\begin{eqnarray}
J_{21}(\eps)
&=&
-2
\int_{\partial\Om_\eps}
(\nabla U'\cdot\nu_\eps)(\nabla U\cdot V) 
=
-2
\int_{\Om_\eps}
\nabla\cdot(\nabla U'(\nabla U\cdot V)) 
=
-2
\int_{\Om_\eps}
\nabla U'\cdot\nabla(\nabla U\cdot V) 
\nonumber\\
&\displaystyle{\mathop{\longrightarrow}_{\eps\to0}}&
-2
\int_{\Om}
\nabla U'\cdot\nabla(\nabla U\cdot V) 
=
-2
\int_{\Om}
\nabla U'\cdot\nabla(\hat{U}'-U')
\nonumber\\
&&\qquad=
-2
\int_{\Om}
\nabla U'\cdot\nabla\hat{U}'
+2
\int_{\Om}
|\nabla U'|^2
=2
\int_{\Om}
|\nabla U'|^2,
\label{e:J21}
\end{eqnarray}
where $\hat{U}'$ is the derivative of $\hat{U}_t$ at $t=0$ and 
$U'=\hat{U}'-\nabla U\cdot V$ in $\Om$.

{For the upcoming computations, we notice that (see notations at the beginning of Section \ref{sect:derivatives})}
\[
\nu^\perp=\tau,
\quad
\tau^\perp=-\nu,
\quad
a\cdot b =a^\perp\cdot b^\perp,
\quad
\partial_{s} g=\nabla g\cdot\tau.
\]
\black{Equalities above are also valid on $\partial\Omega_\eps$ with $\nu_\eps$ and $\tau_\eps$ the exterior normal and tangent vector on $\partial\Omega_\eps$, $\tau_\eps=\nu_\eps^\perp$. 
The following results hold on $\partial\Om_\eps$}
\begin{equation}
(i)\;
\left\{
\begin{array}{rcl}
\partial_{s_\eps}\tau_\eps &=& -{\cal H}_\eps\nu_\eps,
\vspace*{2mm}\\
 \partial_{s_\eps} \nu_\eps &=& {\cal H}_\eps\tau_\eps, 
\end{array}
\right.
\quad
(ii)\;
\left\{
\begin{array}{rclccrlc}
 \partial_{\tau_\eps\tau_\eps} U :=\tau_\eps\cdot D^2U\cdot\tau_\eps
 &=& {\cal H}_\eps(\nabla U\cdot\nu_\eps),
\vspace*{2mm}\\
 \partial_{\nu_\eps\nu_\eps} U :=\nu_\eps\cdot D^2U\cdot\nu_\eps
 &=& -f - {\cal H}_\eps(\nabla U\cdot\nu_\eps),
\end{array}
\right.
\label{e:DnnU}
\end{equation}
because $-\Delta U=f$ in $\Omega$, $U\in C^{2,\sigma}(\Omega)$ for all $\sigma\in(0,1)$, 
$U=\eps$ on $\partial\Om_\eps$  
{and $\partial\Omega_\eps$ { is $C^{2,\sigma}$}.
Formulas $(i)$ are classical and those in $(ii)$ follow easily by considering locally $\partial\Omega$ as the graph of a 
$C^{2,\sigma}$ function {$u:(0,\sigma)\to\R$} with $u'(0)=0$ and $U(x,u(x))=\eps$, and then differentiating this equation twice at $x=0$.}

We decompose $J_{22}(\eps)$ in two terms, one involving 
$\nabla\hat{U}'$ and the other $\nabla U$. 
For one we use the divergence theorem and for the other we integrate by parts
on the boundary as follows
\begin{eqnarray}
J_{22}(\eps)
&=&
-2
\int_{\partial\Om_\eps}
(\nabla (\hat{U}'-\nabla U\cdot V)\cdot\tau_\eps)(V\cdot\tau_\eps)(\nabla U\cdot{\nu_\eps}) 
\nonumber\\
&=&
-2
\int_{\partial\Om_\eps}
(\nabla\hat{U}'\cdot\tau_\eps)(V\cdot\tau_\eps)(\nabla U\cdot\nu_\eps) 
+2
\int_{\partial\Om_\eps}
(\nabla(\nabla U\cdot V)\cdot\tau_\eps)(\nabla U\cdot\nu_\eps)(V\cdot\nu_\eps)\frac{V\cdot\tau_\eps}{V\cdot\nu_\eps} 
\nonumber\\
&=&
2
\int_{\partial\Om_\eps}
(\nabla^\perp\hat{U}'\cdot\nu_\eps)(\nabla^\perp U\cdot V) 
+
\int_{\partial\Om_\eps}
\partial_{s_\eps}((\nabla U\cdot V)^2)\frac{V\cdot\tau_\eps}{V\cdot\nu_\eps}
\nonumber\\
&=&
2
\int_{\Om_\eps}
\nabla^\perp\hat{U}'\cdot\nabla(\nabla^\perp U\cdot V) 
-
\int_{\partial\Om_\eps}
|\nabla U|^2 (V\cdot\nu_\eps)^2\partial_{s_\eps}\left(\frac{V\cdot\tau_\eps}{V\cdot\nu_\eps}\right).
\label{e:J22}
\end{eqnarray}
{Note that the second integral in \eqref{e:J22} is well-defined because
$V|_{\partial\Om_\eps}\in W^{1,\infty}(\partial\Om_\eps;\mathbb R^2)$.}
For $J_3(\eps)$, we decompose $D^2U$ in two terms, one having $\partial_{\nu_\eps\nu_\eps}U$ and $\partial_{\tau_\eps\tau_\eps}U$ derivatives, and the other the mixed derivative $\partial_{\tau_\eps}(\partial_{\nu_\eps} U)$:
\begin{eqnarray}
J_3(\eps)
&=&
-
\int_{\partial\Om_\eps}
(V\cdot \nu_\eps)^2\partial_{\nu_\eps} U\partial_{\nu_\eps\nu_\eps}U
+
(V\cdot \tau_\eps)^2\partial_{\nu_\eps} U\partial_{\tau_\eps\tau_\eps}U
-
\int_{\partial\Om_\eps}
(V\cdot \tau_\eps)(V\cdot\nu_\eps)\partial_{\tau_\eps}((\partial_{\nu_\eps} U)^2)
\nonumber\\
&=&
J_{31}(\eps)+J_{32}(\eps).
\label{e:J3}
\end{eqnarray}
In combination with (ii) (\ref{e:DnnU}), we get
\begin{eqnarray}
J_{31}(\eps)
&=&
\int_{\partial\Om_\eps}
f(\partial_{\nu_\eps} U)(V\cdot \nu_\eps)^2 
+ 
{\cal H}_\eps((V\cdot \nu_\eps)^2-(V\cdot\tau_\eps)^2))(\partial_{\nu_\eps} U)^2.
\label{e:J31}
\end{eqnarray}
For $J_{32}(\eps)$  we have
\begin{eqnarray}
J_{32}(\eps)
&=&
-
\int_{\partial\Om_\eps}
(V\cdot \tau_\eps)(V\cdot\nu_\eps)\partial_{s_\eps}((\partial_{\nu_\eps} U)^2)
=
\int_{\partial\Om_\eps}
(\partial_{\nu_\eps} U)^2
\partial_{s_\eps}((V\cdot \tau_\eps)(V\cdot\nu_\eps)).
\label{e:J32}
\end{eqnarray}
{
Then \eqref{e:J22}, \eqref{e:J31}, \eqref{e:J32} give
\begin{eqnarray*}
J_{22}+J_3
&=&
2
\int_{\Om_\eps}
\nabla^\perp\hat{U}'\cdot\nabla(\nabla^\perp U\cdot V) 
\nonumber\\
&&+
\int_{\partial\Om_\eps}
|\nabla U|^2
\Big(
-\partial_{s_\eps}(V\cdot\tau_\eps)(V\cdot\nu_\eps)
+(V\cdot\tau_\eps)\partial_{s_\eps}(V\cdot\nu_\eps)
\nonumber\\
&&
\hspace*{22mm}
+ 
{\cal H}_\eps((V\cdot \nu_\eps)^2-(V\cdot\tau_\eps)^2))
\nonumber\\
&&
\hspace*{22mm}
+\partial_{s_\eps}(V\cdot\tau_\eps)(V\cdot\nu_\eps)
+(V\cdot\tau_\eps)\partial_{s_\eps}(V\cdot\nu_\eps)
\Big)
\nonumber\\
&&+
\int_{\partial\Om_\eps}
f(\partial_{\nu_\eps} U)(V\cdot \nu_\eps)^2.
\end{eqnarray*}
}
{
So
\begin{eqnarray}
J_{22}+J_3
&=&
2
\int_{\Om_\eps}
\nabla^\perp\hat{U}'\cdot\nabla(\nabla^\perp U\cdot V) 
\nonumber\\
&&+
\int_{\partial\Om_\eps}
\Big(
f(\partial_{\nu_\eps} U)(V\cdot \nu_\eps)^2
+
{\mathcal H}_\eps|\nabla U|^2{|V|^2}
+
2|\nabla U|^2
(|V\cdot\tau_\eps)(\partial_{s_\eps} V\cdot\nu_\eps)
\Big)
.
\label{e:J22+J3}
\end{eqnarray}
}
We denote $K_{i}(\eps)$ each term in the last formula, for $i\in\llbracket1,4\rrbracket$.
{We will show that each of $K_1(\eps)$, $K_2(\eps)$ and $K_4(\eps)$ has a limit as $\eps\to0$ and will identify the limits.
{ As about  $K_3(\eps)$, {we will only use that it is nonnegative (see Remark \ref{rk:K3} and the appendix for more details about $\lim_{\eps\to 0}K_{3}(\eps)$}).}

To identify the limits, we will write first the boundary integrals as integrals in $\Om_\eps$, then we will pass in limit and finally we will write the terms as integrals on $\partial\Om$.
{For $K_1(\eps)$ we have (using $\widehat{U}'=0$ on $\partial\Om$, 
$\nabla\cdot\left(\nabla^\perp\right)=0$ { and $U\in H^2(\Om)$})}
\begin{eqnarray}
K_1(\eps)
&\xrightarrow[\eps\to0]{}&
2
\int_{\Om}
\nabla^\perp\hat{U}'\cdot\nabla(\nabla^\perp U\cdot V) 
=
-2
\int_{\Om}
\nabla\hat{U}'\cdot\nabla^\perp(\nabla^\perp U\cdot V) 
=
0.
\label{e:K1}
\end{eqnarray}
For $K_2(\eps)$ we have {(using again $U\in H^2(\Om)$)}
\begin{eqnarray}
K_2(\eps)
=
\int_{\partial\Om_\eps}
\nu_\eps\cdot(fV(\nabla U\cdot V))
 &=&
\int_{\Om_\eps}
\nabla\cdot(fV(\nabla U\cdot V))\nonumber\\
&\xrightarrow[\eps\to0]{}&
\int_{\Om}
\nabla\cdot(fV(\nabla U\cdot V))
=
\int_{\partial\Om}
f(\partial_{\nu_\eps} U)(V\cdot \nu_\eps)^2.
\label{e:K2}
\end{eqnarray}
{For $K_4(\eps)$ we will prove
\begin{equation}
\lim_{\eps\to0}K_4(\eps)
=
2
\int_{\partial\Om}
|\nabla U|^2
(V\cdot\tau)(\partial_{s} V\cdot\nu).
\label{e:K4}
\end{equation}
Using the divergence theorem to transform the boundary integral in domain integrals as we did above does not work, because a second order derivative of $V$ appears. 
Rather, we proceed as follows.
Let $\partial\Om=\{(r(\theta),\theta),\; \theta\in[0,2\pi]\}$
and $\partial\Omega_\eps=\{(r_\eps(\theta),\theta),\; \theta\in[0,2\pi]\}$. Note that from 
$U(r_\eps(\theta),\theta)=\eps$ on $\partial\Om_\eps$, if $(\varrho,\theta)$ are the polar coordinates we get
\[
 \partial_\varrho U r_\eps' + \partial_\theta U
 =0,\quad so\quad
 r_\eps' = -\frac{\partial_\theta U}{\partial_\varrho U}
 \quad on\quad \partial\Om_\eps.
\]
For $|\eps|$ small all $\Omega_\eps$ are convex and contain a {fixed} small ball. Then necessarily 
$r'_\eps$ is bounded uniformly in $\eps$.
It implies
\begin{eqnarray}
\sqrt{r_\eps^2+(r'_\eps)^2}
&=&
\sqrt{r_\eps^2+\left(\frac{\partial_\theta U}{\partial_\varrho U}\right)^2}
\\
&=&
\frac{{r_\eps}}{|\partial_\varrho U|}
\sqrt{(\partial_\varrho U)^2 +\frac{1}{r_\eps^2}(\partial_\theta U)^2}
\nonumber\\
&=&
r_\eps \frac{|\nabla U|}{|\partial_\varrho U|}
\quad
on
\quad\partial\Om_\eps.
\label{e:ds_eps}
\end{eqnarray}
Using the formulas for $\nu_\eps$ and $\tau_\eps$ in terms of $\nabla U$, 
and then changing the variable $ds_\eps=\sqrt{(r_\eps)^2 + (r'_\eps)^2}d\theta$ 
we get
\begin{eqnarray}
\hspace*{-7mm}
K_4(\eps)
&=&
{-}2\int_{\partial\Om_\eps}
|\nabla U|^2
\left(
V\cdot\frac{\nabla^\perp U}{|\nabla U|}
\right)
\left(\frac{\nabla U}{|\nabla U|}\cdot\nabla V\cdot\frac{\nabla^\perp U}{|\nabla U|}\right)
ds_\eps
\nonumber\\
\hspace*{-7mm}
&=&
{-}2\int_{\partial\Om_\eps}
\left(
(V\cdot\frac{\nabla^\perp U}{|\nabla U|}
\right)
\left(\nabla U\cdot\nabla V\cdot\nabla^\perp U\right)
ds_\eps
\nonumber\\
\hspace*{-7mm}
&=&
{-}2\int_{0}^{2\pi}
(V(r_\eps,\theta)\cdot\nabla^\perp U(r_\eps,\theta))
(\nabla U(r_\eps,\theta)
\cdot
\nabla V(r_\eps,\theta)
\cdot
\nabla^\perp U(r_\eps,\theta))
\frac{r_\eps}{|\partial_\varrho U(r_\eps,\theta)|} 
d\theta,
\nonumber
\end{eqnarray}
where $r_\eps=r_\eps(\theta)$.
A similar formula holds for $K_4(0)$ {with $r$ instead of of $r_\eps$}.
Using the fact that $U\in W^{1,\infty}(\Om)\cap H^2(\Om)$ and Lemma \ref{l:Hv} we get
\begin{eqnarray}
&&
\hspace{-15mm}\frac{{K_4(\eps)-K_4(0)}}{2}
\nonumber\\
&=&
\int_{0}^{2\pi}
\Big(
(V(r,\theta)\cdot\nabla^\perp U(r,\theta))
(\nabla U(r,\theta)
\cdot
\nabla V(r,\theta)
\cdot
\nabla^\perp U(r,\theta))
\frac{r}{|\partial_\varrho U(r,\theta)|}
-
\nonumber\\
&&
\hspace*{11mm}
(V(r_\eps,\theta)\cdot\nabla^\perp U(r_\eps,\theta))
(\nabla U(r_\eps,\theta)
\cdot
\nabla V(r_\eps,\theta)
\cdot
\nabla^\perp U(r_\eps,\theta))
\frac{r_\eps}{|\partial_\varrho U(r_\eps,\theta)|} 
\Big)
d\theta
\nonumber\\
&=&
\int_0^{2\pi}
\int_{r_\eps(\theta)}^{r(\theta)}
\partial_\varrho
\Big(
(V(\varrho,\cdot)\cdot\nabla^\perp U(\varrho,\cdot))
(\nabla U(\varrho,\cdot)
\cdot
\nabla V(\varrho,\cdot)
\cdot
\nabla^\perp U(\varrho,\cdot))
\frac{\varrho}{|\partial_\varrho U(\varrho,\cdot)|}
\Big)
d\varrho d\theta
\nonumber\\
&=&
\int_{\Om\backslash\Om_\eps}
\frac{1}{\varrho}
\partial_\varrho
\Big(
(V(\varrho,\theta)\cdot\nabla^\perp U(\varrho,\theta))
(\nabla U(\varrho,\theta)
\cdot
\nabla V(\varrho,\theta)
\cdot
\nabla^\perp U(\varrho,\theta))
\frac{\varrho}{|\partial_\varrho U(\varrho,\theta)|}
\Big){dx}
\nonumber\\
&\xrightarrow[\eps\to0]{}&
0,
\label{e:K4-K4->0}
\end{eqnarray}
which proves \eqref{e:K4}.
The limit \eqref{e:K4-K4->0} holds because
$r_\eps\to r$ in $L^\infty(\T)$ and the function under the integral is in $L^2(\Om)$
($|\nabla U|/|\partial_\varrho U|$ is bounded - see \eqref{e:ds_eps},
$U\in W^{1,\infty}(\Om)\cap H^2(\Om)$, $V\in W^{1,\infty}(\R^2;\mathbb R^2)$, and $\partial_\varrho \nabla V\in L^\infty(\Om)$ by Lemma \ref{l:Hv}).
}

Replacing \eqref{e:K1}-\eqref{e:K4} in \eqref{e:J22+J3}, and then replacing \eqref{e:J22+J3}, \eqref{e:J21} and
\eqref{e:J1} in \eqref{e:e_D''=..+J} completes the proof.
\hfill$\Box$

~\par

\subsection{Shape derivatives of the first eigenvalue}

For $\lambda_1$ we have the following result, very similar to Theorem \ref{th:e_f''(0)=}.
\begin{theorem}\label{th:l_1''(0)=}
Under the same assumptions as in Theorem \ref{th:e_f''(0)=},
{$\lambda_1$ is twice shape differentiable at $\Om$} and
{\begin{eqnarray}
\hspace*{-9mm}
\lambda_1''(\Om)(V,V)
&\geq&
\!\!\int_{\Om}
|\nabla U_{1}'|^2 
\nonumber\\
&&+
\!\!
\int_{\partial\Om}
\Big(
{\frac{1}{2}\lambda_{1}(\Om) (U_{1}{\partial_{\nu} U_{1})}(V\cdot\nu)^2}+
(\partial_{\nu}U_{1})^2 (V\cdot\tau)(\partial_{s} V\cdot \nu)
\Big)ds,
\label{e:l''(0),main}
\end{eqnarray}}
where
$U_{1}\in H^1_0(\Om)$ {is solution to \eqref{e:l1} such that 
$\int_{\Om}U_{1}^2dx=1$} and $U_{1}'\in H^1(\Om)$ solves in a variational sense
\begin{eqnarray}
-\Delta U_{1}' &=&\lambda_1'{(\Om)(V)}U_{1}+\lambda_1(\Om) U_{1}'\;\; \mbox{in}\;\;\Om,\quad
U_{1}' = -\nabla U_{1}\cdot V\;\;\mbox{on}\;\;\partial\Om.
\label{e:l',U'}
\end{eqnarray}
\end{theorem}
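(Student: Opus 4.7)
The plan is to mimic the proof of Theorem \ref{th:e_f''(0)=} with the substitution $f \leftrightarrow \lambda_1(\Om)U_1$, which is natural since $-\Delta U_1 = \lambda_1 U_1$, while carefully handling the two new features absent in the $E_f$-case: the normalization constraint $\int_\Om U_1^2\,dx = 1$ and the variation $\lambda_1'(\Om)(V)$ of the eigenvalue itself, which enters equation \eqref{e:l',U'}.

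First I would establish the analog of Proposition \ref{p:e_f'(0),e_f''(0)}, invoking classical shape differentiability of $\lambda_1$ (see \cite{HP2018}). Writing $\hat U_{1,t} = U_{1,t}\circ(\Id + tV)$, one gets $\hat U_1'' \in H^1_0(\Om)$, together with the Hadamard-type identity
\begin{equation*}
\lambda_1''(\Om)(V,V) = \lambda_1\int_{\partial\Om} U_1(\partial_\nu U_1)(V\cdot\nu)^2\,ds - \lambda_1\int_\Om U_1 U_1''\,dx,
\end{equation*}
where the boundary integrand happens to vanish since $U_1 = 0$ on $\partial\Om$ (it is kept for formal parallelism with \eqref{e:e_f''}). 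Differentiating the normalization twice yields the orthogonality $\int_\Om U_1 U_1'\,dx = 0$, which I will need below to cancel spurious terms.

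Next I would introduce the level sets $\Om_\eps := \{U_1 > \eps\}$. By the classical Brascamp--Lieb log-concavity of the first eigenfunction on a convex domain (equivalently, Korevaar \cite{K1985}), each $\Om_\eps$ is convex, and by Sard's theorem and the implicit function theorem it is $C^{2,\sigma}$ for a.e.\ small $\eps > 0$. On $\partial\Om_\eps$ the analog of \eqref{e:DnnU} reads
\begin{equation*}
\partial_{\tau_\eps\tau_\eps}U_1 = \mathcal H_\eps(\partial_{\nu_\eps}U_1), \qquad \partial_{\nu_\eps\nu_\eps}U_1 = -\lambda_1 U_1 - \mathcal H_\eps(\partial_{\nu_\eps}U_1),
\end{equation*}
exactly the same formulas with $f$ replaced by $\lambda_1 U_1$. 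Writing $-\lambda_1\int_\Om U_1 U_1'' = \lim_{\eps\to 0}\int_{\Om_\eps} U_1''\Delta U_1$ and decomposing via \eqref{e:U''=} produces analogs $J_1(\eps), J_2(\eps), J_3(\eps)$ to the quantities in \eqref{e:J}. The term $J_1(\eps)$ vanishes by $\hat U_1''\big|_{\partial\Om} = 0$; $J_3(\eps)$ splits into a normal and a tangential part exactly as in \eqref{e:J3}, yielding as $\eps\to 0$ the term $\int_{\partial\Om}\lambda_1 U_1(\partial_\nu U_1)(V\cdot\nu)^2 = 0$ (analog of $K_2$), the curvature term $\int_{\partial\Om_\eps}\mathcal H_\eps|\nabla U_1|^2|V|^2$ (nonnegative, analog of $K_3$, which I drop), and the tangential boundary term $2\int_{\partial\Om}(\partial_\nu U_1)^2(V\cdot\tau)(\partial_s V\cdot\nu)$ (analog of $K_4$, handled via the same polar-coordinate argument using $U_1\in W^{1,\infty}\cap H^2$).

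The main obstacle I anticipate is the treatment of $J_2(\eps)$, which is where the eigenvalue proof genuinely departs from the Dirichlet-energy proof: indeed, whereas $U'$ was harmonic in \eqref{e:U'}, the function $U_1'$ satisfies $-\Delta U_1' = \lambda_1'(\Om)(V)U_1 + \lambda_1 U_1'$, so the divergence-theorem manipulation analogous to \eqref{e:J21} generates additional bulk integrals of the form $\lambda_1'(\Om)(V)\int_\Om U_1(\nabla U_1\cdot V)$ and $\lambda_1\int_\Om U_1'(\nabla U_1\cdot V)$. The first will be killed by combining the Hadamard formula $\lambda_1'(\Om)(V) = -\int_{\partial\Om}(\partial_\nu U_1)^2(V\cdot\nu)$ with Green's formula, while the second vanishes via integration by parts together with $\int_\Om U_1 U_1' = 0$ and $-\Delta U_1 = \lambda_1 U_1$. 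Once these cancellations are identified and the remaining term $2\int_\Om|\nabla U_1'|^2$ is isolated as in \eqref{e:J21}, collecting all contributions and dropping the nonnegative $K_3$-analog gives \eqref{e:l''(0),main}.
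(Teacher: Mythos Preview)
Your overall strategy coincides with the paper's: start from the identity $\lambda_1''(\Om)(V,V)=\lim_{\eps\to0}\int_{\partial\Om_\eps}U_1''(\nabla U_1\cdot\nu_\eps)$, repeat the decomposition $J_1+J_2+J_3$ of Theorem~\ref{th:e_f''(0)=} with $-\Delta U_1=\lambda_1 U_1$ in place of $-\Delta U=f$, use convexity of the level sets $\{U_1>\eps\}$ (the paper cites \cite{BL1977}), and drop the nonnegative curvature term. The paper in fact says nothing more than this.

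Where your proposal goes wrong is in the resolution of the $J_{21}$ obstacle you (rightly) flag. Your two ``extra'' bulk terms $\lambda_1'\!\int_\Om U_1(\nabla U_1\!\cdot\! V)$ and $\lambda_1\!\int_\Om U_1'(\nabla U_1\!\cdot\! V)$ do \emph{not} vanish individually: the first equals $-\tfrac{\lambda_1'}{2}\int_\Om U_1^2\,\nabla\!\cdot\! V$, which is generically nonzero and is not killed by the Hadamard formula; the second is likewise nonzero. The point you miss is that $\Delta U_1'\neq0$ enters the $J_{21}$ computation \emph{twice}: once when converting the boundary integral to a bulk one, and once again in the step $\int_\Om\nabla U_1'\!\cdot\!\nabla\hat U_1'$ (which was zero in the $E_f$ case but here equals $\int_\Om(\lambda_1' U_1+\lambda_1 U_1')\hat U_1'$). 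The two sets of extra contributions combine via the relation $\hat U_1'-U_1'=\nabla U_1\!\cdot\! V$ to give
\[
-2\lambda_1'\!\int_\Om U_1U_1'\;-\;2\lambda_1\!\int_\Om(U_1')^2,
\]
and only now does the normalization identity $\int_\Om U_1U_1'=0$ kill the first piece. So your cancellation scheme should be replaced by this pairing argument; as written, the proposed mechanisms for each term separately do not work.
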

{As for the proof of Theorem \ref{th:e_f''(0)=}, the starting point for proving this result is the following formula that one can find in \cite[Section 4.3.3]{LNP1}
\[\lambda_1''(\Om)(V,V)
=
\lambda_1\int_{\Om}|U_{1}'|^2
-
\int_{\Om}
U_{1}\Delta U_{1}'',
\]
where 
\begin{eqnarray}
U_{1}'&=&\hat{U}_{1}'-V\cdot U_{1},
\label{e:U',l}
\\
U''_{1}&=&\hat{U}_{1}''-2V\cdot\nabla U_{1} -V\cdot D^2U_{1}\cdot V.
\label{e:U'',l}
\end{eqnarray}
Furthermore, {$U_1>0$ { in $\Om$} and if {we denote}
$\Omega_\eps=\{x\in\Om,\;\; U_1(x)>\eps\}$}, we have
\begin{equation}
{\lambda_1''(\Om)(V,V)}
=
\lim_{\eps\to0}
\int_{\partial\Om_\eps}
U_{1}''(\nabla U_{1}\cdot\nu_\eps).
\label{e:l=lim}
\end{equation}

{
The rest of the proof is exactly similar to the case of $E_f''(\Om)$ from Theorem \ref{th:e_f''(0)=}: first we note that \eqref{e:l=lim} differs from \eqref{e:e_f''}
only by the fact that it has not the term with $f$ and {involves $U_1$} instead of $U$ ;
next, we repeat the computations as in Theorem \ref{th:e_f''(0)=} 
with the only exception that instead of $-\Delta U=f$ we have 
$-\Delta U_{1}=\lambda_1 U_{1}${; last, it is classical that $U_{1}>0$ in $\Om$ and that the level sets 
$\{U_{1}>\eps\}$ are convex by \cite{BL1977}.}
Therefore, we do not reproduce the proof. 
{See also the appendix for a complete formula for $\lambda_{1}''(\Om)$ when $\Om$ is convex.}\\}

{We take the opportunity to notice that, for the proof of Theorem \ref{th:l_1''(0)->convex}, we will need also the following classical result for $\lambda_1'(\Om)$, see for example \cite{HP05Var}}
\begin{eqnarray}
\hspace*{-9mm}
{\lambda_1'(\Om)(V)}
&=&
{-\int_{\Om}
|\nabla U_{1}|^2(V\cdot\nu)ds,}
\label{e:l'(0),main}
\end{eqnarray}

\section{Convexity preserving perturbations}\label{sect:perturbation}
{In this section we will consider vector fields of the form $V=v z$ with 
$v\in W^{1,\infty}(\partial\Omega)$ and $z$ {is a constant vector in }$\R^2$}. {These vector fields can easily be extended in $\R^2$, for example using Lemma \ref{l:Hv} { we denote $V=\bar{v}z$ the extension} (though in this Section, any other Lipschitz extension also works).}
If we assume $\Omega$ is a convex polygon and consider the vector field
 $V=v z$, with $v$ the hat function associated to three (consecutive) vertices 
 $P_{1},P_{2},P_{3}\in\partial\Om$ (see Definition \ref{d:hat}),
then the shape $\Omega_t=(I+tV)(\Omega)$ is clearly still a convex polygon for $t$ small, {whatever the vector $z$ is.}

{We {will} show that if $\Om$ is convex but not assumed to be a polygon, one can still construct perturbations of the form $\Omega_t=(I+tV)(\Omega)$ that remain convex, where $V=vz$ with $v$ close to a hat function, and $z$ being suitably chosen. As for our purpose it is enough to work locally, we will see 
$\Om$ as a local graph of a convex function {$u$ on $I=(0,\sigma)$}, which leads us to build local perturbation of convex functions on the real line. 
\\

In a similar manner to Definition \ref{def:convex} we introduce this definition:
\begin{definition}\label{def:convexgraphe}
Let {$I=(0,\sigma)$, $\sigma>0$, and} $u:I\to\R$ be a convex function in the interval $I$ and $v:I\to\R$. We say that $v$ preserves the convexity of $u$ if there exists $t_{0}>0$ such that $u+tv$ is convex for every $t\in[-t_{0},t_{0}]$.
\end{definition}
{
In the case where $\Om$ is locally the graph of $u:I\to\R$ and $v:I\to\R$ preserves the convexity of $u$ and 
is compactly supported in $I$}, then {$V={\bar{v}}z$} with $z=(0,1)$ preserves the convexity of 
$\Om$ in the sense of Definition \ref{def:convex}.
}

{\begin{proposition}\label{p:varphi}
Let $u:I\to\R$ be convex on $I=(0,\sigma)$, $\sigma>0$, $u\in W^{1,\infty}(0,\sigma)$ {such that} $u''\geq0$ and 
$\{x_{1},x_2,x_3\}\subset{\rm supp}(u'')$ with $0<x_1<x_2<x_3<\sigma$. 
\begin{enumerate}[label=(\roman*)]
\item 
There exists a sequence $(\varphi_k)$ in $H^1_0(0,\sigma)$ preserving the convexity of $u$, 
{compactly supported in $(0,\sigma)$} and converging in $H^1(0,\sigma)$ to the hat function $\varphi$ associated with the points $x_1,x_2,x_3$.
\item
Assume $x_2$ is an accumulation point of ${\rm supp}(u'')$.
Then there exist two sequences $(\varphi_k^-)$ and  $(\varphi_k^+)$ in $H^1_0(0,\sigma)$ 
preserving the convexity of $u$, {compactly supported in $(0,\sigma)$}  and converging in $L^1(0,\sigma)$ respectively 
to $\varphi\1_{(x_1,x_2)}$, $\varphi\1_{(x_2,x_3)}$,
 where $\varphi$ is the hat function in i) above.
 \end{enumerate}
\end{proposition}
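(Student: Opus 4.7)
The key observation is that if $\varphi'' = g\, u''$ (as measures) with $g\in L^\infty(u'')$, then $u''+t\varphi'' = (1+tg)u''\geq 0$ for every $|t|\leq 1/\|g\|_\infty$, so $\varphi$ preserves the convexity of $u$ in the sense of Definition \ref{def:convexgraphe}. My strategy is therefore to choose $\varphi_k$ whose second derivative is a linear combination of restrictions of $u''$ to small neighborhoods of $x_1,x_2,x_3$, with the combination tailored so that $\varphi_k$ has compact support in $(0,\sigma)$ and $\varphi_k\to\varphi$ in $H^1$.

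For part (i), first note that the hat coefficients $c_1=1/(x_2-x_1)$, $c_2=-1/(x_2-x_1)-1/(x_3-x_2)$, $c_3=1/(x_3-x_2)$ satisfy the two balance conditions $\sum_i c_i=0$ and $\sum_i c_i x_i=0$, and conversely these two conditions (with $x_i$ replaced by the centroids of the mollifying measures) are exactly equivalent to the resulting second antiderivative vanishing outside the support of the mollifiers. Pick $\delta_k\to 0^+$ making the intervals $I_k^{(i)}:=(x_i-\delta_k,x_i+\delta_k)$ pairwise disjoint and contained in $(0,\sigma)$. Since $x_i\in{\rm supp}(u'')$, the masses $m_k^{(i)}:=u''(I_k^{(i)})$ are strictly positive, and the probability measures $\mu_k^{(i)}:=(m_k^{(i)})^{-1}u''|_{I_k^{(i)}}$ converge weakly to $\delta_{x_i}$ with centroids $\bar x_i^{(k)}\to x_i$. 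I would then define $\varphi_k$ by $\varphi_k(0)=0$ and $\varphi_k''=\sum_i c_i^{(k)}\mu_k^{(i)}$, where $(c_i^{(k)})$ is taken in the one-dimensional kernel of the rank-two matrix with rows $(1,1,1)$ and $(\bar x_1^{(k)},\bar x_2^{(k)},\bar x_3^{(k)})$, normalized so that $c_i^{(k)}\to c_i$ (possible because the hat coefficients nearly solve the system, exactly so in the limit). The two relations force $\varphi_k$ to be compactly supported in $\bigcup_i I_k^{(i)}\subset(0,\sigma)$; and by construction $\varphi_k''=g_k\,u''$ with $g_k$ equal to $c_i^{(k)}/m_k^{(i)}$ on $I_k^{(i)}$ and zero elsewhere, so $\varphi_k$ preserves the convexity of $u$ with $t_0(k):=1/\|g_k\|_\infty>0$. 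Finally, the weak convergence $\varphi_k''\rightharpoonup\varphi''$, after one integration, yields $\varphi_k'\to\varphi'$ pointwise away from $\{x_1,x_2,x_3\}$ and uniformly bounded in $L^\infty$; dominated convergence then gives $\varphi_k\to\varphi$ in $H^1$.

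For part (ii), the targets $\varphi\1_{(x_1,x_2)}$ and $\varphi\1_{(x_2,x_3)}$ are discontinuous at $x_2$, forcing convergence only in $L^1$. Using the accumulation hypothesis, pick $y_k\in{\rm supp}(u'')$ with $y_k\to x_2^+$ (the case of one-sided accumulation from the left is handled symmetrically by choosing two approximating nodes on that side). Let $\psi_k$ be the hat with nodes $x_1,x_2,y_k$: it coincides with $\varphi\1_{(x_1,x_2)}$ outside $[x_2,y_k]$, and both functions are bounded by $1$ there, so $\|\psi_k-\varphi\1_{(x_1,x_2)}\|_{L^1}\leq|y_k-x_2|\to 0$. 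Since all three nodes $x_1,x_2,y_k$ lie in ${\rm supp}(u'')$, applying (i) to $\psi_k$ produces, for each $k$, an $H^1$-approximation of $\psi_k$ by convexity-preserving functions, and a diagonal extraction yields the required sequence $\varphi_k^-$. The construction of $\varphi_k^+$ is analogous, using $y_k\to x_2^-$ and the hat with nodes $y_k,x_2,x_3$.

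The main delicate point in this program is that the density $g_k$ in (i) is generically not uniformly bounded — typically $m_k^{(i)}\to 0$ and hence $t_0(k)\to 0$ — but this is harmless since Definition \ref{def:convexgraphe} requires only a positive $t_0$ for each individual $\varphi_k$, not one uniform in $k$. The secondary verification is that the prescribed kernel coefficient can be consistently chosen converging to the hat values, which reduces to the centroids $\bar x_i^{(k)}$ remaining pairwise distinct for large $k$; this is automatic from $\bar x_i^{(k)}\to x_i$ and $x_1<x_2<x_3$.
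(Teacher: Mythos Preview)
Your argument for (i) is essentially the paper's proof in different clothing: both take linear combinations of restrictions of $u''$ to shrinking neighbourhoods of the $x_i$, tune the three coefficients so that the second antiderivative vanishes outside a compact subinterval of $(0,\sigma)$, and observe that the resulting $\varphi_k$ preserves convexity because $\varphi_k''$ is a bounded density times $u''$. The paper packages this via auxiliary functions $w(a,b,c,d)$ solving $w''=u''\mathbb{1}_{(b,c)}$ with zero boundary data, while you work directly with the normalized measures $\mu_k^{(i)}$ and their centroids; the content is the same. Two small slips: you write that $\varphi_k$ is compactly supported in $\bigcup_i I_k^{(i)}$ when you mean in the convex hull $[x_1-\delta_k,x_3+\delta_k]$, and you should also impose $\varphi_k'(0)=0$ (not just $\varphi_k(0)=0$) to make the second antiderivative well-defined; both are cosmetic.

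For (ii) you take a genuinely different and rather cleaner route. The paper builds $\varphi_k^-$ from scratch by a variant of the (i) construction, exploiting the accumulation at $x_2$ to place \emph{two} localization windows $(x_{2,k}-\delta_k,x_{2,k})$ and $(x_{2,k},x_{2,k}+\delta_k)$ with mass on either side, carries out explicit $L^\infty$ estimates, and then simply sets $\varphi_k^+=\varphi_k-\varphi_k^-$. Your reduction --- approximate $\varphi\mathbb{1}_{(x_1,x_2)}$ by genuine hat functions $\psi_k$ with a third node $y_k\in\mathrm{supp}(u'')$ near $x_2$, apply (i) to each $\psi_k$, and diagonalize --- avoids these estimates entirely and makes the logic transparent. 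The price is that you must treat the one-sided accumulation case separately (which you do, by taking two auxiliary nodes on the available side), whereas the paper's direct construction handles both sides at once and, by defining $\varphi_k^+$ as a difference, gets the second sequence for free. Both approaches are correct; yours is shorter, the paper's is more self-contained.
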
}
\noindent
{\bf Proof}.
Let $0<a<b<c<d<\sigma$ with $(b,c)\cap{\rm supp}(u'')\neq\emptyset$. Define $w=w(a,b,c,d)$ by
\[
w''=u''\1_{(b,c)}\;\; in\;\; {\cal D}'({a,d}),\qquad w\in H^1_0(a,d).
\]
It is classical that $w\in W^{1,\infty}(a,d)$ and $w$ is convex. {As $\supp(u'')\cap(b,c)\neq\emptyset$, $w'(a^+)$ and $w(d^-)$ cannot be 0}. {From the convexity of $w$ and the fact that $w$ is linear in
$(a,b)\cup(c,d)$, the graph of $w$ is under the piece-wise linear curve joining the points $(a,0)$, $(c,w(a^+)(c-a))$ and
$(d,0)$, {and above the one joining $(a,0), (b,w'(d^-)(b-d)), (d,0)$}, therefore}
we have
\begin{equation}
\frac{b-a}{d-b}\leq \left|\frac{w'(d^-)}{w'(a^+)}\right|\leq \frac{c-a}{d-c}.
\label{e:est(w(a,d))}
\end{equation}
\\
{Proof of $(i)$:} 
Let
\begin{eqnarray*}
v_{1,k}&=&
C_{1,k}w(x_1-2\delta_k,x_1-\delta_k,x_1+\delta_k,x_3+2\delta_k)
{=
:
C_{1,k}w(a_k,b_{1,k},c_{1,k},d_k),}
\\
v_{2,k}&=& C_{2,k}w(x_1-2\delta_k,x_2-\delta_k,x_2+\delta_k,x_3+2\delta_k)
{=
:
C_{2,k}w(a_k,b_{2,k},c_{2,k},d_k),}
\\
v_{3,k}&=&C_{3,k}w(x_1-2\delta_k,x_3-\delta_k,x_3+\delta_k,x_3+2\delta_k)
{=
:
C_{3,k}w(a_k,b_{3,k},c_{3,k},d_k)}.
\end{eqnarray*}
where {the sequence $(\delta_k)$ is positive and converges to zero} and 
$C_{1,k}$, $C_{2,k}$, $C_{3,k}$ such that
{{$v_{1,k}'(a_k^+)=v_{2,k}'(a_k^+)=-1$,
$v_{3,k}'(d_k^-)=1$}}.

{We look for $\lambda_{1,k}$, $\lambda_{3,k}$ such that}
$v_k=\lambda_{1,k} v_{1,k} + v_{2,k} + \lambda_{3,k} v_{3,k}$
satisfies 
$v'_k(a_k^+)=v_k'(b_k^-)=0$}. In other words, $\lambda_{1,k}, \lambda_{3,k}$ must solve
{
\begin{equation}
\left\{
\begin{array}{rcl}
\lambda_{1,k} v_{1,k}'(a_k^+) + \lambda_{3,k} v_{3,k}'(a_k^+)&=&-{v_{2,k}'}(a_k^+),
\\
\lambda_{1,k} v_{1,k}'(d_k^-) + \lambda_{3,k} {v_{3,k}'}(d_k^-)&=&-v_{2,k}'(d_k^-).
\end{array}
\right.
\label{e:l1,l3}
\end{equation}}
Using {\eqref{e:est(w(a,d))}}  we get
{
\begin{equation}
v_{1,k}'(d_k^-)=o(1),
\quad
v_{2,k}'(d_k^-)=\frac{x_2-x_1}{x_3-x_2}+o(1),
\quad
v_{3,k}'(a_k^+)=o(1),
\label{e:v1k',v2k',v3k'}
\end{equation}}
where ${\displaystyle \lim_{\delta_k\to0}o(1)=0}$.
Then \eqref{e:l1,l3} is equivalent to 
\[
\left\{
\begin{array}{rcl}
-\lambda_{1,k} + \lambda_{3,k} o(1)&=&1,
\vspace*{2mm}
\\
\lambda_{1,k} o(1) + \lambda_{3,k}&=&-{\displaystyle \frac{x_2-x_1}{x_3-x_1}}+o(1)
\end{array}
\right.
\Rightarrow
\left\{
\begin{array}{rcl}
\lambda_{1,k} &=&-1+o(1),
\vspace*{2mm}
\\
\lambda_{3,k}&=&-{\displaystyle \frac{x_2-x_1}{x_3-x_1}}+o(1).
\end{array}
\right.
\]
So {$\lambda_{1,k}$ and $\lambda_{3,k}$ exist for $k$ large enough. Moreover, $v_{k}$ preserves the convexity of $u$: indeed with the choice of $\lambda_{1,k},\lambda_{3,k}$, we have 
${\displaystyle v_{k}''
=
\sum_{i=1}^3{\lambda_{i,k}C_{i,k}}u''\mathbb{1}_{(x_{i}-\delta_{k},x_{i}+\delta_{k})}\textrm{ in }\mathcal{D}'(0,\sigma)}$}.

{Besides}, {defining $\lambda_{2,k}=1$}, {for $i=1,2,3$ we have 
$$
\int_0^\sigma \lambda_{i,k}v_{i,k}''
=
\lambda_{i,k}
(v_{i,k}'(c_{i,k}^-) - v_{i,k}'(b_{i,k}^+)),
$$
where $v_{i,k}'(x^-)$, resp. $v_{i,k}'(x^+)$, denotes the left, resp. right, limit at $x$ of $v_{i,k}'$,
and then  by using \eqref{e:v1k',v2k',v3k'} it is easy to show that the limits 
${\displaystyle\lim_{k\to\infty}\int_0^\sigma \lambda_{i,k}v_{i,k}''}$ exist for $i=1,2,3$}.
Therefore {by Lemma \ref{l:f_ik->Ci} below} $v_k$ extended by zero in $(0,\sigma)$, converges in $H^1(0,\sigma)$ to a linear combination of the hat functions associated to $0$, $x_i$, $\sigma$, $i\in\{1,2,3\}$.
It implies ${\displaystyle \varphi_k=\frac{v_k}{{v_{k}}(x_2)}}$ is the required sequence because 
{$\varphi_{k}(0)=\varphi_{k}(x_1)=\varphi_{k}(x_3)=\varphi_{k}(\sigma)=0$} and
{$v_k(x_2)=x_1-x_2+o(1)$}.

\noindent
{Proof of $(ii)$:} Let us first construct the sequence $(\varphi_k^-)$.
As $x_2$ is an accumulation point of ${\rm supp}(u'')$ there exists $(x_{2,k})$ converging to $x_2$  and 
$(\delta_k)$ positive and converging to zero such that 
\[
(x_{2,k}-\delta_k,x_{2,k})\cap{\rm supp}(u'')\neq\emptyset,
\quad
(x_{2,k},x_{2,k}+\delta_k)\cap{\rm supp}(u'')\neq\emptyset.
\]
We construct $\varphi_k^-$ similarly as we did for $\varphi_k$ in $(i)$ above.
Namely, let 
$v_{1,k}=C_{1,k}w(x_1-2\delta_k, x_1-\delta_k,x_1+\delta_k,x_2+2\delta_k)$,
$v_{2,k}=C_{2,k}w(x_1-2\delta_k, x_{2,k}-\delta_k,x_{2,k},x_2+2\delta_k)$,
$v_{3,k}=C_{3,k}w(x_1-2\delta_k, x_{2,k},x_{2,k}+\delta_k, x_2+2\delta_k)$,
with $C_{1,k}$, $C_{2,k}$ and $C_{3,k}$ such that 
$v'_{1,k}((x_1-2\delta_k)^+)=v'_{2,k}((x_1-2\delta_k)^+)=-1$, $v_{3,k}'((x_2+2\delta_k)^-)=1$.

Next take $v_k=\lambda_{1,k}v_{1,k}+v_{2,k}+\lambda_{3,k}v_{3,k}$ such that
$v'_k((x_1-2\delta_k)^+)=v_k'((x_2+2\delta_k)^-)=0$ { so that again it preserves the convexity of $u$}. Then $\lambda_{1,k}$ and $\lambda_{3,k}$ solve \eqref{e:l1,l3}
with $x_2+2\delta_k$ instead of $x_3+2\delta_k$.
Using \eqref{e:est(w(a,d))}  we get
\[
v_{1,k}'((x_2+2\delta_k)^-)=o(1),
\quad
v_{2,k}'((x_1-2\delta_k)^+)=o(1),
\quad
v_{3,k}'((x_1-2\delta_k)^+)=o(1).
\]
Then the solution of the system above is 
$\lambda_{1,k} =-1+o(1)$,
$\lambda_{3,k}=-1+o(1)$.

{Let $\lambda_{2,k}=1$ and extend $v_k$ and $v_{i,k}$ in $(0,\sigma)$ by zero. Then we have
\[
v_k''
=
\sum_{i=1}^3 \lambda_{i,k} C_{i,k}u''\1_{(b_{i,k},c_{i,k})}\;\;\; in\;\; {\cal D}'(0,\sigma),
\]
where $(a_k,b_{i,k},c_{i,k},d_k)$ are the points in the definition of $w$ associated to $v_{i,k}$. 
As $u$ is differentiable in a set $I_d\subset I$, $|I_d|=|I|$, it implies that $v_k$ is differentiable in
$I_d$, for all $k$ and regardless the choice of $(\delta_k)$.
We can choose $(\delta_k)$ such that furthermore we have $x_1+\delta_k\in I_d$ for all $k$, 
which implies that $v_k'(x_1+\delta_k)$ exists for all $k$.
}

{From boundary conditions of $v_k$
on $x_1-2\delta_k$ and $x_2+2\delta_k$, and as $v_k$ is 
concave in $[x_1-2\delta_k,x_1+\delta_k]\cup[x_{2,k},x_2+2\delta_k]$, convex in $[x_1+\delta_k,x_{2,k}]$
and linear in $[x_1+\delta_k,x_{2,k}-\delta_k]$ we have
\begin{eqnarray*}
\|v_k\|_{L^\infty(x_1-2\delta_k,x_1+\delta_k)}
&\leq&
 2\delta_k|v_k'(x_1+\delta_k)|,
\\
v_k(x)
&=&
v_k'(x_1+\delta_k)(x-x_1-\delta)+v_k(x_1+\delta_k)\;\; \mbox{in}\;\;  [x_1+\delta_{k},x_{2,k}-\delta_k]
\\
\|v_k\|_{L^\infty(x_1-2\delta_k,x_2+2\delta_k)}
&\leq&
(x_{2,k}-x_1+\delta)|v_k'(x_1+\delta_k)|.
\end{eqnarray*}
Necessarily $v_{k}'(x_{1}+\delta_{k})\neq 0$ {because otherwise $v_k\equiv0$, which is impossible,} 
and then the required sequence is given by 
${\displaystyle \varphi_k^-=\frac{1}{x_2-x_1}\frac{v_k}{v_k'(x_1+\delta_k)}}$.}

For the other sequence we take $\varphi_k^+=\varphi_k-\varphi_k^-$.
\hfill$\Box$

\begin{lemma}\label{l:f_ik->Ci}
Let $0<b_{i,k}<c_{i,k}<\sigma$, $i=1,\ldots,m$, $k\in\N$, $\lim_{k\to\infty}b_{i,k}=\lim_{k\to\infty}c_{i,k}=x_i\in(0,\sigma)$,
with all $x_i$ different,
$v_k\in H^1_0(0,\sigma)$, $v_k''=\sum_{1\leq i\leq m}f_{i,k}$, 
with $f_{i,k}\in H^{-1}(0,\sigma)$, $f_{i,k}$ unsigned, 
${\rm supp}(f_{i,k})\subset(b_{i,k},c_{i,k})$. 
If  $\lim_{k\to\infty}\int_0^\sigma f_{i,k}=C_i$ with certain $C_i\in\R$, for all $i$,
then $f_{i,k}$ converges to $C_i\delta(x-x_i)$, and $v_k$ converges in $H^1(0,\sigma)$ to $v$, solution 
of $v''=\sum_{1\leq i\leq m}C_i\delta(x-x_i)$. Hence, $v$ a linear combination of hat functions associated to 
$\{0,x_i,\sigma\}$, $i=1,\ldots,m$.
\end{lemma}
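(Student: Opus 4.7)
The strategy is to first identify the weak limit of the $f_{i,k}$ as Radon measures, then use the ODE structure to promote this to strong $H^1$ convergence of the $v_k$, and finally describe the limit explicitly.

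\emph{Step 1: Weak convergence of $f_{i,k}$ to $C_i\delta_{x_i}$.} Since $f_{i,k}$ has constant sign, $|\int f_{i,k}|=\int |f_{i,k}|$ stays bounded (it converges to $|C_i|$). For any continuous test function $\phi\in C([0,\sigma])$, since $\operatorname{supp}(f_{i,k})\subset(b_{i,k},c_{i,k})$ and $b_{i,k},c_{i,k}\to x_i$,
\[
\Big|\int_0^\sigma f_{i,k}\phi-\phi(x_i)\int_0^\sigma f_{i,k}\Big|\leq \omega_\phi(|c_{i,k}-b_{i,k}|)\cdot\Big|\int_0^\sigma f_{i,k}\Big|\longrightarrow 0,
\]
where $\omega_\phi$ is the modulus of continuity of $\phi$. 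Hence $\langle f_{i,k},\phi\rangle\to C_i\phi(x_i)$, so $f_{i,k}\rightharpoonup C_i\delta_{x_i}$ weakly as measures, and in particular in $H^{-1}(0,\sigma)$ against $H^1_0$ (using the continuous embedding $H^1_0(0,\sigma)\hookrightarrow C([0,\sigma])$).

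\emph{Step 2: Uniform $H^1_0$ bound and weak limit identification.} Pairing $v_k''$ with $v_k$ in the $H^{-1}$--$H^1_0$ duality and writing $\int f_{i,k} v_k$ with the continuous function $v_k$, we get
\[
\|v_k\|_{H^1_0}^2=-\sum_{i}\int_0^\sigma f_{i,k}v_k \leq \sum_i \Big|\int_0^\sigma f_{i,k}\Big|\cdot\|v_k\|_{L^\infty}\leq C\,\|v_k\|_{H^1_0}
\]
by the 1D Sobolev embedding, so $(v_k)$ is bounded in $H^1_0$. Extracting a subsequence, $v_k\rightharpoonup v$ weakly in $H^1_0$ and strongly in $C([0,\sigma])$ by Rellich. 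Testing $v_k''=\sum_i f_{i,k}$ against any $\phi\in C_c^\infty(0,\sigma)$ and passing to the limit using Step 1 yields $v''=\sum_i C_i\delta_{x_i}$ in $\mathcal{D}'(0,\sigma)$, together with $v(0)=v(\sigma)=0$; this uniquely determines $v$, so the whole sequence converges.

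\emph{Step 3: Strong $H^1_0$ convergence and identification of $v$.} Because $v_k\to v$ uniformly on $[0,\sigma]$, and using again that $f_{i,k}$ has constant sign supported in a shrinking neighbourhood of $x_i$,
\[
\Big|\int_0^\sigma f_{i,k}v_k-v(x_i)\int_0^\sigma f_{i,k}\Big|\leq \Big|\int_0^\sigma f_{i,k}\Big|\cdot\|v_k-v(x_i)\|_{L^\infty(b_{i,k},c_{i,k})}\longrightarrow 0,
\]
hence $\int f_{i,k}v_k\to C_i v(x_i)$, and therefore $\|v_k\|_{H^1_0}^2=-\sum_i\int f_{i,k}v_k\to -\sum_i C_i v(x_i)=\|v\|_{H^1_0}^2$. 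Combined with weak convergence, this gives $v_k\to v$ strongly in $H^1_0(0,\sigma)$. Finally, $v''=\sum_i C_i\delta_{x_i}$ with $v(0)=v(\sigma)=0$ is a piecewise affine function with nodes at the $x_i$, i.e.\ a linear combination of the hat functions associated to $\{0,x_i,\sigma\}$.

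\emph{Expected main difficulty.} The only delicate point is upgrading the obvious weak convergence $v_k\rightharpoonup v$ to strong $H^1_0$ convergence, since $f_{i,k}\to C_i\delta_{x_i}$ only holds weakly in $H^{-1}$. The constant sign assumption on $f_{i,k}$ is exactly what is needed to replace $v_k$ by its value at $x_i$ in the energy identity, yielding $\|v_k\|_{H^1_0}^2\to\|v\|_{H^1_0}^2$ and hence strong convergence.
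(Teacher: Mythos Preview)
Your proof is correct, but it takes a genuinely different route from the paper. The paper does not pass through weak convergence of the $v_k$ at all: instead it shows directly that $f_{i,k}\to C_i\delta_{x_i}$ \emph{strongly} in $H^{-1}(0,\sigma)$, by estimating the pairing with an arbitrary $\varphi\in H^1$ via
\[
\bigl|\langle f_{i,k}-C_i\delta_{x_i},\varphi\rangle\bigr|
\leq |\varphi(x_i)|\,\Bigl|C_i-\textstyle\int f_{i,k}\Bigr|
+\|\varphi-\varphi(x_i)\|_{C^0(x_i-\eps,x_i+\eps)}\,\|f_{i,k}\|_{(C_0)'},
\]
and bounding the second term by $C\eps^{1/2}\|\varphi\|_{H^1}$ (1D Sobolev embedding) together with $\|f_{i,k}\|_{(C_0)'}=\bigl|\int f_{i,k}\bigr|$ bounded (here the constant-sign assumption enters). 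Once strong $H^{-1}$ convergence is established, $v_k\to v$ in $H^1_0$ follows in one line from the continuity of $(-\Delta)^{-1}:H^{-1}\to H^1_0$.

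Your approach trades this single strong-convergence estimate for a more hands-on argument: you first get weak $H^1_0$ convergence via an a priori bound, identify the limit, and then upgrade to strong convergence through the energy identity $\|v_k\|_{H^1_0}^2=-\sum_i\int f_{i,k}v_k$ and norm convergence. Both methods use the constant-sign hypothesis in the same essential way (to replace the total variation of $f_{i,k}$ by $|\int f_{i,k}|$); the paper's route is shorter, while yours avoids the slightly delicate uniform $H^{-1}$ estimate and makes the mechanism of strong convergence more explicit.
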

\begin{proof}
First we note that $f_{i,k}\to C_i\delta(x-x_i)$ strongly in $H^{-1}(0,\sigma)$. Indeed, for all 
$\varphi,\, \eta_i\in H^1(0,\sigma)$, with $\eta_i=1$ in  $[x_i-\eps,x_i+\eps]$, 
$\eta_i=0$ in  $(0,\sigma)\backslash[x_i-2\eps,x_i+2\eps]$, for $k$ large we have
\begin{eqnarray*}
\left|\int_0^\sigma(f_{i,k}-C_i\delta(x-x_i))\varphi dx\right|
&=&
\left|\int_0^\sigma(f_{i,k}-C_i\delta(x-x_i))\varphi\eta_i dx\right|
\\
&\leq&
|\varphi(x_i)|\left|C_i-\int_0^\sigma f_{i,k}dx\right|
+
\|\eta_i(\varphi-\varphi(x_i))\|_{C^0(0,\sigma)}\|f_{i,k}\|_{C'_0([0,\sigma])}
\\
&\leq&
\left(\left|C_i-\int_0^\sigma f_{i,k}dx\right|
+
2\eps^{1/2}\|f_{i,k}\|_{C'_0([0,\sigma])}
\right)
\|\varphi\|_{H^1(0,\sigma)},
\end{eqnarray*} 
which shows the convergence of $f_{i,k}$.
Then the $H^1(0,\sigma)$ convergence of $v_k$ to $v$ follows from the continuity of the Dirichlet problem.
\end{proof}

{\section{{Convexity of $E_{f}$ and $\lambda_1$ and application to their maximization}\label{ss:convexity-e''}}}

{In this section, we will prove Theorem \ref{th:main} for problem {\eqref{e:Pb1} and \eqref{e:Pb2}. We will consider in detail the proof for the problem \eqref{e:Pb1}, {the proof for \eqref{e:Pb2} being very similar}}. 
As mentioned in the introduction, this result is a consequence of Theorem \ref{th:e_f''(0)->convex} which is a weak convexity property of the functional $E_{f}$. So we start this section by proving the latter.\\

\subsection{{Proof of Theorems \ref{th:e_f''(0)->convex} and \ref{th:l_1''(0)->convex}}}
Let $\Om_{0}$, {$\Gamma_{0}$}, ${\sigma_{0}}$, $u_0$ be as in Theorem \ref{th:e_f''(0)->convex}. { Let also $z=(0,1)$ and $V=\bar{v} z$ with $\bar{v}\in W^{1,\infty}(\R^2)$ the extension of $v\in W^{1,\infty}(\partial\Om_{0})$ given by Lemma \ref{l:Hv}.}
It follows from 
Theorem \ref{th:e_f''(0)=} that 
\begin{eqnarray}
\hspace*{-10mm}
E_{f}''(\Om_{0})(V,V)
 \geq 
\int_{\Omega_{0}}
|\nabla U'_0|^2 
&+&\frac{1}{2}
\int_{\partial\Om_{0}}
\!\!\!\!
f(\partial_{\nu_0} U_0)(z\cdot \nu_0)^2
v^2
ds_0
\nonumber
\\
\hspace*{-10mm}
&+&
\int_{\partial\Omega_{0}}
(\partial_{\nu_{0}} U_0)^2(z\cdot\tau_0)
(z\cdot\nu_0)v\partial_{s_0} v ds_0.
\label{eq:Ef''bis} 
\end{eqnarray}
In order to prove Theorem \ref{th:e_f''(0)->convex}, it remains {to show that the right hand side in \eqref{eq:Ef''bis} is positive for some specific choice of  
$v$}.

As in the assumptions of Theorem  \ref{th:e_f''(0)->convex}, let {$\varphi\in H^1_0(0,{\sigma_{0}})$} be the hat function associated to the triplet of points 
$x_{1}<x_{2}<x_{3}$ in $(0,{\sigma_{0}})$,
and we take {$v\in W^{1,\infty}(\partial\Om_0)$ defined by:
$$ {v(p)=\varphi(x),\;\; \forall p=(x,u_0(x))\textrm{ with } x\in (0,\sigma_0),\;\; and\;\; 
    v(p)=0,\;\ \forall p\in\partial\Om_{0}\setminus\Gamma_{0}}.
    $$
{Note that in all this Section, we will work in the parametrization of $\Gamma_{0}$ given by $u_{0}$: for example, $(\partial_{\nu_{0}} U_0)^2$ may refer sometimes to $x\in(0,\sigma_{0})\mapsto (\partial_{\nu_{0}} U_0)^2(x,u_{0}(x))$. In order to avoid confusion, $x$ will always denote an element of $(0,\sigma_{0})$.}

}

{The next lemma allows to study the sign of the last term in \eqref{eq:Ef''bis} for this choice of $v$. 
}
\begin{lemma}\label{l:...phi'phi>=0}
Under the conditions of Theorem \ref{th:e_f''(0)->convex}, {if $x_{3}$ is small enough} we have
\[
\int_{\partial\Om_0}
(\partial_{\nu_{0}} U_0)^2(z\cdot\tau_0)(z\cdot\nu_0) v\partial_{s_0}v ds_0
\geq0.
\]
\end{lemma}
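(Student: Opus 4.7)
My plan is to reduce the boundary integral to a one-dimensional integral along the graph $\Gamma_0$, and then exploit convexity of $u_0$ together with the smallness of $u_0'(x)$ for $x$ near $0$.

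First, I would compute the geometric factors in the integrand. Since $\Om_0$ lies above $\Gamma_0$, one has $\nu_0 = (u_0', -1)/\sqrt{1+(u_0')^2}$ and $\tau_0 = \nu_0^\perp = (1, u_0')/\sqrt{1+(u_0')^2}$, so that $(z\cdot\nu_0)(z\cdot\tau_0) = -u_0'/(1+(u_0')^2)$ for $z = (0,1)$. Because $U_0 \equiv 0$ on $\Gamma_0$, we have $\nabla U_0 = (\partial_{\nu_0} U_0)\nu_0$ there, hence $(\partial_{\nu_0}U_0)^2 = (1+(u_0')^2)(\partial_y U_0)^2$. Combining these identities with $v(x, u_0(x)) = \varphi(x)$ and $\partial_{s_0}v\,ds_0 = \varphi'(x)\,dx$, the boundary integral reduces to
\[
\int_{\partial\Om_0} (\partial_{\nu_0}U_0)^2 (z\cdot\tau_0)(z\cdot\nu_0)\,v\,\partial_{s_0}v\,ds_0 \;=\; -\int_{x_1}^{x_3} u_0'(x)\,G(x)\,\varphi(x)\varphi'(x)\,dx,
\]
where $G(x) := [\partial_y U_0(x, u_0(x))]^2 \geq 0$, so it suffices to prove this right-hand side is nonnegative.

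Next I would integrate by parts. Writing $\varphi\varphi' = \tfrac{1}{2}(\varphi^2)'$ and using $\varphi(x_1) = \varphi(x_3) = 0$,
\[
-\int_{x_1}^{x_3}\! u_0'\,G\,\varphi\varphi'\,dx \;=\; \tfrac{1}{2}\!\int_{x_1}^{x_3}\!\varphi^2\,d(u_0'G) \;=\; \tfrac{1}{2}\!\int G\,\varphi^2\,du_0'' \;+\; \tfrac{1}{2}\!\int u_0'\,G'\,\varphi^2\,dx,
\]
via the BV product rule, where $u_0''$ is the nonnegative Radon measure encoding convexity. The first summand is nonnegative and, by hypothesis (i) of Theorem \ref{th:e_f''(0)->convex}, strictly positive: in case (i.1), $u_0''$ has an atom at $x_2$ so the term is bounded below by $G(x_2)u_0''(\{x_2\}) > 0$ independently of $x_3$; in case (i.2), accumulation of $\supp u_0''$ at $x_2$ together with $\varphi^2(x_2) = 1$ forces positive mass near $x_2$. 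The second summand is controlled by $u_0'(x_3)\,\|G'\|_{L^\infty(x_1,x_3)}(x_3 - x_1)$, which is $o(1)$ as $x_3 \to 0^+$ because $u_0'(0^+) = 0$.

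The positivity and boundedness of $G$ and $G'$ near $O$ would follow from Hopf's lemma at $O$---applicable thanks to the interior ball condition ensured by $u_0'(0^+) = 0$---combined with interior elliptic regularity of $U_0 \in H^4_{\mathrm{loc}}\cap C^{2,\sigma}(\Om_0)$ coming from $f\in H^2_{\mathrm{loc}}$. For $x_3$ small enough, the positive first contribution will dominate the vanishing second, and the lemma will follow. The hard part will be case (i.2): the first term may itself degenerate as $x_3 \to 0$, so one must leverage the accumulation hypothesis carefully to ensure that the decay of $\int G\,\varphi^2\,du_0''$ remains slower than that of $u_0'(x_3)(x_3-x_1)$, by selecting the nodes $x_i$ close to points where $u_0''$ carries significant mass.
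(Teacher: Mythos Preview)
Your reduction of the boundary integral to $-\int_{x_1}^{x_3} u_0'\, G\, \varphi\varphi'\,dx$ with $G(x)=(\partial_y U_0(x,u_0(x)))^2$ is correct. But the integration-by-parts strategy that follows has a fatal regularity gap: to write $d(u_0'G)=G\,du_0''+u_0'G'\,dx$ and then bound the second summand by $u_0'(x_3)\|G'\|_{L^\infty}(x_3-x_1)$, you need $G'\in L^\infty$ (or at least $G\in BV$). However, $G$ is a \emph{boundary} trace of $|\nabla U_0|^2$, and on a merely convex domain the only regularity available is $U_0\in H^2(\Om_0)\cap W^{1,\infty}(\Om_0)$ (Proposition~\ref{p:reg(U)}), which yields $G\in H^{1/2}(\partial\Om_0)\cap L^\infty$ and nothing more. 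Your appeal to interior $H^4_{loc}\cap C^{2,\sigma}$ regularity is irrelevant since the integral lives on $\partial\Om_0$, and the boundary is only Lipschitz, so no Schauder-type estimate applies. The paper in fact relies precisely on this weak $H^{1/2}$ (hence VMO) boundary regularity in the next lemma, Lemma~\ref{l:int|DU|phi^2<}.

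Even granting $G'\in L^\infty$, your lower bound on the ``positive'' term $\int G\varphi^2\,du_0''$ is not robust. In case (i.1) the point $(x_2,u_0(x_2))$ is a convex corner of $\Om_0$, and at such a corner $\nabla U_0$ typically vanishes (the local behaviour is $r^{\pi/\alpha}$ with interior angle $\alpha<\pi$), so $G(x_2)=0$ and your leading contribution disappears; Hopf's lemma does not help at corners.

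The paper's proof avoids both difficulties by a different, elementary route that never differentiates $G$. It exploits that $\varphi'$ is piecewise constant on $(x_1,x_2)$ and $(x_2,x_3)$, splits the integral accordingly, and uses only the monotonicity of $u_0'/(1+(u_0')^2)$ to pull this factor out at $x_2^\pm$. What remains is $\int_{x_i}^{x_{i+1}}(\partial_{\nu_0}U_0)^2\varphi\,dx$, which is evaluated \emph{exactly} as $\mu|x_{i+1}-x_i|$ via the first-order optimality condition~(ii) of Theorem~\ref{th:e_f''(0)->convex} --- an assumption you never invoke. The two pieces then combine to $\mu\big\llbracket u_0'/(1+(u_0')^2)\big\rrbracket_{x_2}\geq 0$. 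Condition~(ii) is the key structural ingredient you are missing.
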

{\begin{proof}
Note that in the local system of coordinates {where $\Gamma_{0}=\{(x,u_{0}(x)), x\in (0,\sigma_{0})\}$}, we have 
$\nu_0=(u_0',-1)/\sqrt{1+(u_0')^2}$,
$\tau_0=(1,u_0')/\sqrt{1+(u_0')^2}$ 
and $ds_0=\sqrt{1+(u_0')^2}dx$. Therefore
\begin{eqnarray*}
\int_{\partial\Omega_0}
(\partial_{\nu_{0}} U_0)^2(z\cdot\tau_0)(z\cdot\nu_0) v{\partial_{s_0}v} ds_0
&=&
-\int_{x_{1}}^{x_{3}}
(\partial_{\nu_{0}} U_0)^2\frac{u_0'}{1+(u_0')^2}{\varphi\varphi'}dx.
\end{eqnarray*}

From {assumption $(ii)$ in Theorem \ref{th:e_f''(0)->convex}} we have 
\begin{eqnarray}\label{eq:EL1}
\forall i\in\{1,2\}, \;\;\int_{x_{i}}^{x_{i+1}}
(\partial_{\nu_{0}} U_0)^2{\varphi dx}
&=&
2\mu \int_{x_{i}}^{x_{i+1}}{\varphi dx}
=
\mu|x_{i+1}-x_{i}|,
\end{eqnarray}
{From $u_{0}'(0^+)=0$ we deduce that if $x_{3}$ is small enough, we have $|u_{0}'|\leq 1$ in $(0,x_{3})$}. Note also that 
$u_0'$ is increasing and positive, and $r\in[0, 1]\mapsto\frac{r}{1+r^2}$ is increasing,
so this implies
$\frac{u_0'}{1+(u_0')^2}$ is increasing {in $(0,x_{3})$}.
Using the monotonicity of $\frac{u_{0}'}{1+(u_0')^2}$ and \eqref{eq:EL1} we get
\begin{eqnarray*}
\int_{\partial\Omega_0}
(\partial_{\nu_{0}} U_0)^2&&\hspace{-10mm}(z\cdot\tau_0)(z\cdot\nu_0)v\partial_{s_0} vds_0\\
&=&
-
\int_{x_{1}}^{x_2}
(\partial_{\nu_{0}} U_0)^2\frac{u_0'}{1+(u_0')^2}{\varphi  \varphi'dx}
-
\int_{x_2}^{x_{3}}
(\partial_{\nu_{0}} U_0)^2\frac{u_0'}{1+(u_0')^2}{\varphi \varphi'dx}
\\
&=&
-
\frac{1}{|x_2-x_{1}|}
\int_{x_{1}}^{x_2}
(\partial_{\nu_{0}} U_0)^2\frac{u_0'}{1+(u_0')^2}{\varphi dx}
+
\frac{1}{|x_{3}-x_2|}
\int_{x_2}^{x_{3}}
(\partial_{\nu_{0}} U_0)^2\frac{u_0'}{1+(u_0')^2}{\varphi  dx}
\\
&\geq&
-
\left(\frac{u_0'(x_2^-)}{1+(u_0'(x_2^-))^2}\right)
\frac{1}{|x_2-x_{1}|}
\int_{x_{1}}^{x_2}
(\partial_{\nu_{0}} U_0)^2{\varphi  dx}
\\&&\hspace{35mm}+
\left(\frac{u_0'(x_2^+)}{1+(u_0'(x_2^+))^2}
\right)
\frac{1}{|x_2-x_{3}|}
\int_{x_2}^{x_{3}}
(\partial_{\nu_{0}} U_0)^2{\varphi  dx}
\\
&=&
\mu \Big\llbracket \frac{u_0'}{1+(u_0')^2}\Big\rrbracket_{x_2}
\geq
0,
\end{eqnarray*}
{where $\llbracket f\rrbracket_{x_2}:=f(x_{2}^+)-f(x_{2}^-)$. This concludes the proof.}
\end{proof}}

{The following two lemma will help analyzing the second term in \eqref{eq:Ef''bis}.}
\begin{lemma}\label{l:int|DU|phi^2<}
{
Under the conditions of Theorem \ref{th:e_f''(0)->convex}}, {if $x_{3}$ is small enough} we have
\[
\int_{\partial \Om_{0}}|\partial_{\nu_{0}}U_0||V\cdot\nu_0|^2ds_0
\leq 
\frac{4}{\sqrt{\mu}}\int_{\partial \Om_{0}}|\partial_{\nu_{0}}U_0|^2|V\cdot\nu_0|^2ds_0.
\]
\end{lemma}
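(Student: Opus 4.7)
\textbf{Plan of proof for Lemma \ref{l:int|DU|phi^2<}.}

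The plan is to reduce the inequality to a one-dimensional estimate and then exploit the first-order optimality condition (ii) from Theorem \ref{th:e_f''(0)->convex} via two Cauchy--Schwarz inequalities.

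First, since $V=\bar v (0,1)$ with $\bar v\in W^{1,\infty}(\R^2)$ an extension of the function $v\in W^{1,\infty}(\partial\Om_0)$ that vanishes on $\partial\Om_0\setminus\Gamma_0$, the integrals in the lemma reduce to integrals on $\Gamma_0$. Parametrizing $\Gamma_0$ by $x\in(0,\sigma_0)$ and using (as computed in the proof of Lemma \ref{l:...phi'phi>=0}) $z\cdot\nu_0=-1/\sqrt{1+(u_0')^2}$ and $ds_0=\sqrt{1+(u_0')^2}\,dx$, we have $|V\cdot\nu_0|^2 ds_0=\varphi^2/\sqrt{1+(u_0')^2}\,dx$. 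Thus, writing $w(x):=\varphi^2(x)/\sqrt{1+(u_0'(x))^2}$, the inequality to prove becomes
\[
\int_{x_1}^{x_3}|\partial_{\nu_0}U_0|\,w\,dx\;\leq\;\tfrac{4}{\sqrt{\mu}}\int_{x_1}^{x_3}(\partial_{\nu_0}U_0)^2\,w\,dx.
\]

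Next, I would use the optimality condition (ii) with two choices of affine $\ell$ on each $[x_i,x_{i+1}]$. Taking $\ell\equiv 1$ gives $\int_{x_i}^{x_{i+1}}(\partial_{\nu_0}U_0)^2\,dx=2\mu(x_{i+1}-x_i)$, and summing yields
\[
\int_{x_1}^{x_3}(\partial_{\nu_0}U_0)^2\,dx=2\mu(x_3-x_1).
\]
Taking $\ell=\varphi|_{[x_i,x_{i+1}]}$ (which is affine on each subinterval) and summing,
\[
\int_{x_1}^{x_3}(\partial_{\nu_0}U_0)^2\varphi\,dx=2\mu\int_{x_1}^{x_3}\varphi\,dx=\mu(x_3-x_1).
\]
Applying Cauchy--Schwarz to $f:=(\partial_{\nu_0}U_0)^2\geq 0$ in the form $(\int f\varphi)^2\leq (\int f)(\int f\varphi^2)$, I obtain the key lower bound
\[
\int_{x_1}^{x_3}(\partial_{\nu_0}U_0)^2\varphi^2\,dx\;\geq\;\tfrac{\mu(x_3-x_1)}{2}.
\]
A direct computation gives $\int_{x_1}^{x_3}\varphi^2\,dx=(x_3-x_1)/3$, so $\int\varphi^2\,dx\leq\frac{2}{3\mu}\int(\partial_{\nu_0}U_0)^2\varphi^2\,dx$.

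Then a second Cauchy--Schwarz yields
\[
\int_{x_1}^{x_3}|\partial_{\nu_0}U_0|\varphi^2\,dx\leq\Bigl(\int(\partial_{\nu_0}U_0)^2\varphi^2\Bigr)^{1/2}\Bigl(\int\varphi^2\Bigr)^{1/2}\leq\sqrt{\tfrac{2}{3\mu}}\int(\partial_{\nu_0}U_0)^2\varphi^2\,dx.
\]
Finally, I handle the factor $\sqrt{1+(u_0')^2}$: since $u_0'(0^+)=0$ and $u_0'$ is increasing, for $x_3$ small enough $1\leq \sqrt{1+(u_0')^2}\leq 2$ on $(0,x_3)$, hence $\varphi^2/2\leq w\leq \varphi^2$. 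Combining with the previous estimate gives
\[
\int|\partial_{\nu_0}U_0|\,w\,dx\leq \int|\partial_{\nu_0}U_0|\varphi^2\,dx\leq \sqrt{\tfrac{2}{3\mu}}\int(\partial_{\nu_0}U_0)^2\varphi^2\,dx\leq 2\sqrt{\tfrac{2}{3\mu}}\int(\partial_{\nu_0}U_0)^2\,w\,dx,
\]
and $2\sqrt{2/(3\mu)}\leq 4/\sqrt{\mu}$, concluding the proof. There is no real obstacle here; the argument is entirely built on the first-order optimality condition plus two Cauchy--Schwarz inequalities, and the smallness of $x_3$ enters only through the mild estimate $\sqrt{1+(u_0')^2}\leq 2$.
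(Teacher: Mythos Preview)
Your argument is correct, and in fact considerably more elementary than the paper's own proof. The paper proceeds by invoking the regularity $(\partial_{\nu_0}U_0)^2\in H^{1/2}(\partial\Om_0)$ (deduced from $U_0\in H^2(\Om_0)\cap W^{1,\infty}(\Om_0)$) and the embedding $H^{1/2}\hookrightarrow \mathrm{VMO}$: for $x_3$ small the mean oscillation of $(\partial_{\nu_0}U_0)^2$ on $[x_1,x_3]$ is below $\mu/12$, so the set $J^c:=\{(\partial_{\nu_0}U_0)^2\leq\mu/2\}$ has measure at most $|\omega|/6$; the inequality then follows by splitting the integral over $J$ and $J^c$.

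Your route avoids this analytic input entirely. You exploit the first-order condition (ii) with \emph{two} test functions ($\ell\equiv1$ and $\ell=\varphi$), then chain two Cauchy--Schwarz inequalities, the first of which (with weight $f=(\partial_{\nu_0}U_0)^2$) yields the key lower bound $\int f\varphi^2\geq\mu(x_3-x_1)/2$. This is sharper (you obtain the constant $2\sqrt{2/3}/\sqrt{\mu}$ rather than $4/\sqrt{\mu}$), and the smallness of $x_3$ is used only to control the harmless factor $\sqrt{1+(u_0')^2}$. The trade-off is that your argument is tailored to the hat function $\varphi$ (via the explicit value $\int\varphi^2=(x_3-x_1)/3$ and the affineness of $\varphi$ on each subinterval), whereas the paper's VMO argument would adapt more readily to other weights; but for the lemma as stated your proof is both valid and preferable.
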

{
\begin{proof}
{
As in Lemma \ref{l:...phi'phi>=0}, if $x_{3}$ is small enough we have $u_{0}'\leq 1$ in $(0,x_{3})$. 
{Using the parametrization in $x$, we have}
\begin{eqnarray*}
\int_{\partial \Om_{0}}|\partial_{\nu_0}U_0||V\cdot\nu_0|^2ds_0
&=&
\int_{x_1}^{x_3}|\partial_{\nu_0}U_0|\frac{1}{\sqrt{1+(u_0')^2}}\varphi^2 dx
\leq
\int_{x_1}^{x_3}|\partial_{\nu_0}U_0|\varphi^2 dx,
\\
\int_{\partial \Om_{0}}|\partial_{\nu_0}U_0|^2|V\cdot\nu_0|^2ds
&=&
\int_{x_1}^{x_3}|\partial_{\nu_0}U_0|^2\frac{1}{\sqrt{1+(u_0')^2}}\varphi^2 dx
\geq
\frac{1}{\sqrt{2}}
\int_{x_1}^{x_3}|\partial_{\nu_{0}}U_0|^2\varphi^2dx.
\end{eqnarray*}
Then to prove the claim is enough to show
\[
\int_{x_1}^{x_3}|\partial_{\nu_{0}}U_0|\varphi^2
\leq 
2\sqrt{\frac{2}{\mu}}\int_{x_1}^{x_3}|\partial_{\nu_{0}}U_0|^2\varphi^2.
\]
}
As {recalled in Proposition \ref{p:reg(U)}}, $U_0\in H^2(\Om_{0})\cap W^{1,\infty}(\Om_0)$, 
{so it is easy to deduce that $|\nabla U_{0}|^2\in H^1(\Om_{0})$, and therefore 
$(\partial_{\nu_0}U_0)^2=-|\nabla U_{0}|^2\in H^{1/2}(\partial\Om_{0})$. Seen as function on $x\in (0,\sigma_{0})$, we still have $x\mapsto (\partial_{\nu_0}U_0)^2(x,u_{0}(x))$ belongs to $H^{1/2}(0,\sigma_{0})$ as the transformation is Lipschitz.}
Though it is well-known that functions in $H^{1/2}(0,\sigma_{0})$ may not be continuous, they still get some weak continuity property, namely they belong to the space of vanishing mean oscillation, see for example \cite[Example 2]{BN95Deg}. This implies that
$$
\lim_{a\to 0}
\sup_{\omega\subset (0,\sigma_{0}), |\omega|\leq a}
\left\{
\frac{1}{|\omega|}\int_{\omega}\left|(\partial_{\nu_0} U_0)^2-\overline{(\partial_{\nu_0} U_0)^2}_{\omega}
\right|{dx}\right\}=0, \;\;\;
\textrm{ where }
\overline{(\partial_{\nu_0} U_0)^2}_{\omega}
=
\frac{1}{|\omega|}\int_{\omega}(\partial_{\nu_0} U_0)^2{dx}.
$$
From this property we deduce that there exists $a>0$ such that
$$\forall \omega\subset (0,\sigma), |\omega|\leq a,
\frac{1}{|\omega|}\int_{\omega}\left|(\partial_{\nu_0} U_0)^2-\overline{(\partial_{\nu_0} U_0)^2}_{\omega}
\right|{dx}\leq \frac{\mu}{12}.$$
We apply this to {$\omega=[x_{1},x_{3}]$} for $x_{3}<a$. From {assumption
$(ii)$ in Theorem \ref{th:e_f''(0)->convex}} one has 
$\overline{(\partial_{\nu_0} U_0)^2}_{\omega}=\mu$. 
Let denote $J:=\{x\in \omega, \;(\partial_{\nu_0} U_0)^2>\frac{\mu}{2}\}$. 

{We claim that 
\begin{equation}\label{e:Jnc}|J^c|\leq \frac{1}{6}|\omega|.
\end{equation}
Indeed, 
$$
{\frac{\mu}{12}
\geq}
\frac{1}{|\omega|}
\int_{\omega}\left|(\partial_{\nu_0} U_0)^2-\mu\right|\geq \frac{1}{|\omega|}\int_{J^c}\left|(\partial_{\nu_0} U_0)^2-\mu\right|
\geq 
\frac{\mu}{2}
\frac{|J^c|}{|\omega|}
$$}
which proves the claim.

Moreover, by definition we also have
{\begin{equation}\label{e:est1}
\int_{\omega}|\partial_{\nu_0}U_0|\varphi^2
=
\int_{J}|\partial_{\nu_0}U_0|\varphi^2
+
\int_{J^c}|\partial_{\nu_0}U_0|\varphi^2
\leq 
\sqrt{\frac{2}{\mu}}\int_{J}|\partial_{\nu_0}U_0|^2\varphi^2
+
\sqrt{\frac{\mu}{2}}\int_{J^c}\varphi^2
\end{equation}
on one hand, and
\begin{equation}\label{e:est2}
\int_{\omega}|\partial_{\nu_0}U_0|^2\varphi^2
\geq 
\int_{J}|\partial_{\nu_0}U_0|^2 \varphi^2
\geq
\frac{\mu}{2}\int_{J} \varphi^2
\end{equation}
on the other.

{With \eqref{e:Jnc}, we get}
\begin{eqnarray}
\int_{J}\varphi^2
=
\int_{\omega}\varphi^2-\int_{J^c}\varphi^2
&\geq& 
\frac{|\omega|}{3}-|J^c|
\nonumber
\\
&\geq &
|\omega|\left[\frac{1}{3}-\frac{|J^c|}{|\omega|}\right]
\nonumber
\\
&\geq& \frac{1}{6}|\omega|\geq|J^c|\geq \int_{J^c}\varphi ^2
\label{e:est2+}
\end{eqnarray}
This allows to conclude, using \eqref{e:est1}, \eqref{e:est2} and \eqref{e:est2+}
\begin{eqnarray*}
\int_{\omega}
|\partial_{\nu_0}U_0|\varphi^2
&\leq& 
\sqrt{\frac{2}{\mu}}
\int_{J}|\partial_{\nu_0}U_0|^2\varphi^2
+
\sqrt{\frac{\mu}{2}}
\int_{J^{{c}}}\varphi^2
\\
&\leq&
\sqrt{\frac{2}{\mu}}
\int_{J}|\partial_{\nu_0}U_0|^2\varphi^2 
+ 
\sqrt{\frac{\mu}{2}}\cdot\frac{2}{\mu}
\int_{J}|\partial_{\nu_0}U_0|^2\varphi^2\\
&\leq& 
{2\sqrt{\frac{2}{\mu}}}
\int_{\omega}|\partial_{\nu_0}U_0|^2\varphi^2,
\end{eqnarray*}
which proves the lemma.}
\end{proof}
}

{
\begin{lemma}\label{l:L2<H1/2}
Let $\Om_0\subset\R^2$ convex.
There exists $C>0$ such that {for every ${\varepsilon}\in(0,1]$}, for every 
$p_0=(p_{0,x}, p_{0,y})\in\partial\Om_0$ and every $w\in H^{1/2}(\partial\Om_0)$ with
${\rm supp}(w)\subset\partial\Om_0\cap B({p_{0}},{\varepsilon})$, 
{$B(p_0,{\varepsilon}):=\{p\in \R^2,\; |p-p_0|<{\varepsilon}\}$},
we have
\begin{equation}
 \|w\|_{L^2(\partial\Om_0)}
 \leq
 C\sqrt{{\varepsilon}} |w|_{H^{1/2}(\partial\Om_0)}.
\label{e:L2<H1/2}
\end{equation}
\end{lemma}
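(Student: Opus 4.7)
The plan rests on the Gagliardo double-integral representation of the $H^{1/2}$-seminorm on the Lipschitz (indeed, convex) curve $\partial\Om_0$,
\[
|w|_{H^{1/2}(\partial\Om_0)}^2 \;\simeq\; \int_{\partial\Om_0}\int_{\partial\Om_0} \frac{|w(p)-w(q)|^2}{|p-q|^2}\,ds(p)\,ds(q),
\]
together with the vanishing of $w$ on $\partial\Om_0\setminus S$, where $S:=\partial\Om_0\cap B(p_0,\varepsilon)$. The strategy is to produce a ``witness'' set $T\subset \partial\Om_0\setminus S$ with arclength $|T|\geq c_1\varepsilon$ and such that $|p-q|\leq c_2\varepsilon$ for every $(p,q)\in S\times T$. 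Restricting the Gagliardo integral to $S\times T$ and using $w\equiv 0$ on $T$ would then give
\[
|w|_{H^{1/2}(\partial\Om_0)}^2 \;\geq\; \int_{S}\int_{T} \frac{|w(p)|^2}{|p-q|^2}\,ds(q)\,ds(p) \;\geq\; \frac{|T|}{(c_2\varepsilon)^2}\|w\|_{L^2(\partial\Om_0)}^2 \;\geq\; \frac{c_1}{c_2^{2}\,\varepsilon}\|w\|_{L^2(\partial\Om_0)}^2,
\]
which is exactly \eqref{e:L2<H1/2}.

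First I would dispose of the ``large $\varepsilon$'' case: when $\varepsilon\geq\varepsilon_0$ for some $\varepsilon_0>0$ depending only on $\Om_0$, I would use the standard Poincar\'e--Wirtinger inequality on the compact Lipschitz curve $\partial\Om_0$ to get $\|w-\bar w\|_{L^2(\partial\Om_0)}\leq C\,|w|_{H^{1/2}(\partial\Om_0)}$; since the monotonicity of perimeter for convex bodies inside a ball yields $|S|\leq 2\pi\varepsilon$, choosing $\varepsilon_0$ so that $2\pi\varepsilon_0\leq |\partial\Om_0|/4$ allows to absorb the mean $\bar w$ into $\|w\|_{L^2}$, and the bound $\sqrt\varepsilon\geq\sqrt{\varepsilon_0}$ yields the claim with an enlarged constant.

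In the main case $\varepsilon\leq\varepsilon_0$ (in particular $\varepsilon$ small compared to $\mathrm{diam}(\Om_0)$), I would construct $T$ explicitly: provided $\Om_0\not\subset B(p_0,3\varepsilon)$ (the converse situation again being handled by the Poincar\'e case), $\partial\Om_0$ meets both circles $\partial B(p_0,\varepsilon)$ and $\partial B(p_0,3\varepsilon)$, and by connectedness it contains a connected subarc $T$ joining the two circles inside the annulus $B(p_0,3\varepsilon)\setminus\bar B(p_0,\varepsilon)$; its arclength is bounded below by its chord length, hence by $2\varepsilon$, while $T\cap S=\emptyset$ and $|p-q|\leq |p-p_0|+|p_0-q|\leq 4\varepsilon$ for every $p\in S$ and $q\in T$. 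The computation above then applies with $c_1=2$ and $c_2=4$.

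The main technical obstacle is this construction of $T$: the convexity of $\Om_0$ is used crucially via the perimeter monotonicity bound $|\partial\Om_0\cap B(p_0,r)|\leq 2\pi r$ (which rules out any pathological ``length concentration'' of $\partial\Om_0$ near $p_0$) and via the existence of a connected boundary arc joining $\partial B(p_0,\varepsilon)$ to $\partial B(p_0,3\varepsilon)$ in the annulus. This is the one place where the two-dimensional convex geometry plays a genuine role; once $T$ is in hand the rest is a direct manipulation of the Gagliardo integral.
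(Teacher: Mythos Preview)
Your main argument---restricting the Gagliardo double integral to $S\times T$ where $T$ is a boundary arc lying in the annulus $B(p_0,3\varepsilon)\setminus \overline{B}(p_0,\varepsilon)$---is correct and clean. It is genuinely different from the paper's route: there the authors push everything to the unit circle via a bi-Lipschitz radial map, establish the $\varepsilon=1$ case through the harmonic extension and a Poincar\'e inequality, prove an auxiliary equivalence of the Euclidean-kernel and arclength-kernel $H^{1/2}$ seminorms on $\partial B$ (their appendix Lemma~\ref{l:seminorms,equivalence}), and then conclude by a scaling $g(\theta)=h(\varepsilon\theta)$ in the angular variable. Your argument bypasses the change of variables, the harmonic extension, and the seminorm-equivalence lemma entirely, replacing them by a single geometric observation. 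It is shorter, and in fact the witness-set step uses nothing about convexity beyond $\partial\Om_0$ being a connected rectifiable curve extending past $B(p_0,3\varepsilon)$; the paper's approach, by contrast, is tied to the radial parametrisation.

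One point needs tightening. Your ``large $\varepsilon$'' paragraph has the inequalities the wrong way round: the absorption of $\bar w$ via $|S|\le 2\pi\varepsilon\le|\partial\Om_0|/4$ works precisely when $\varepsilon\le\varepsilon_0$, not when $\varepsilon\ge\varepsilon_0$. In fact your annulus construction already covers every $\varepsilon$ for which $\Om_0\not\subset B(p_0,3\varepsilon)$, so the only residual case is $\mathrm{diam}(\Om_0)\le 6\varepsilon$; there the support constraint can become vacuous and the inequality \eqref{e:L2<H1/2} is false for constant $w$. This is a defect of the lemma's stated range $\varepsilon\in(0,1]$ rather than of your argument (the paper's transfer to $\partial B$ glosses over the same point), and it is harmless for the application in Theorem~\ref{th:e_f''(0)->convex}, where $\varepsilon$ is taken small. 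Just replace your large-$\varepsilon$ paragraph by the remark that the annulus argument applies whenever $3\varepsilon<\mathrm{diam}(\Om_0)$, and that this suffices for the use made of the lemma.
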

\begin{proof}
Without loss of generality we assume $p_0=(p_{0,x},0)$.
The idea of the proof is as follows. First, we change the variable and write \eqref{e:L2<H1/2} equivalently on the unit circle $\partial B$, where
$B$ is the unit disk with center in origin. Next, we give an equivalent form for the seminorm $|\cdot|_{H^{1/2}(\partial B)}$ and 
then conclude with a scaling argument.

Note that if $(r,\theta)$ are polar coordinates, the map $(r,\theta)\mapsto (x,y)=(r\cos\theta,r\sin\theta)$ is a 
$C^\infty$ diffeomorphism from  $(0,\infty)\times[-\pi,\pi)$ into $\R^2\backslash(0,0)$. We denote by
$(r(x,y),\theta(x,y))$ its inverse.

As for the proof of Lemma \ref{l:Hv}, we choose an origin inside $\Om_{0}$ and consider 
$r_0\in W^{1,\infty}(\T)$ - the radial function of $\Om_{0}$ such that, in polar coordinates, 
$\partial\Om_0=\{(r,\theta), r\in[0,r_0(\theta)),\;\; \theta\in\T\}$. 
We consider $T(x,y)=r_0(\theta(x,y))\left[\begin{array}{r}x\\y\end{array}\right]$. Note that 
$T$ is $W^{1,\infty}$ in a neighborhood of $\partial B$, 
$T(\partial B)=\partial\Om_0$  and
\begin{eqnarray*}
\partial_x T(x,y)
&=&
\left(-\frac{y}{r^2}\right)r_0'(\theta)\left[\begin{array}{r}x\\y\end{array}\right]
+
r_0(\theta)\left[\begin{array}{r}1\\0\end{array}\right],
\\
\partial_y T(x,y)
&=&
\left(\frac{x}{r^2}\right)r_0'(\theta)\left[\begin{array}{r}x\\y\end{array}\right]
+
r_0(\theta)\left[\begin{array}{r}0\\1\end{array}\right],
\\
{\rm det}[\nabla T(x,y)]
&=&
r_0^2(\theta(x,y))\neq0,
\end{eqnarray*}
which shows that $T$ is a {bi-Lipchitzian} transformation from a neighborhood of $\partial B$ onto a neighborhood of $\partial\Om_0$.
Using the transformation $T$, it is easy to show that the lemma is equivalent to:
there exists $C>0$, such that for every $\varepsilon\in(0,1)$ and for every 
$h \in H^{1/2}_{\varepsilon}(\partial B)
:=
H^{1/2}(\partial B)
\cap
\{h,\; {\rm supp}(h)\subset \partial B\cap B((1,0),\varepsilon)\}$ we have
\begin{equation}
 \|h\|_{L^2(\partial B)}
 \leq
 C\sqrt{{\varepsilon}}
 |h|_{H^{1/2}(\partial B)},\;\,
 \mbox{where}\;\,
 |h|_{H^{1/2}(\partial B)}^2
=
\int_{\partial B}
ds_y
\int_{\partial B}
\frac{|h(x)-h(y)|^2}{|x-y|^2}ds_x
 \label{e:L2<H1/2,1}
\end{equation}
and  $B((1,0),\varepsilon)$ is the ball with center $(1,0)$ and radius
$\varepsilon$.
The proof is based on the facts: 
$\|w\|_{L^2(\partial\Om_0)}$ and $\|h\|_{L^2(\partial B)}$,
$h=T^{-1}w$, are equivalent,
$|w|_{H^{1/2}(\partial\Om_0)}=\|\nabla R_{w,\Om_0}\|_{L^2(\Om_0)}$,
$|h|_{H^{1/2}(\partial B)}=\|\nabla R_{h,B}\|_{L^2(B)}$,
and the norms 
$\|\nabla R_{w,\Om_0}\|_{L^2(\Om_0)}$, 
$\|\nabla R_{h,B}\|_{L^2(B)}$
are equivalent. Here
$R_{f,\Om_0}$ satisfies $-\Delta R_{f,\Om_0}=0$ in $\Om_0$, $R_{f,\Om_0}=f$ on 
$\partial\Om_0$.

Clearly, \eqref{e:L2<H1/2,1} holds for ${\varepsilon}=1$. Indeed, for 
$h\in H^{1/2}_1(\partial B)$ we have $|h|_{H^{1/2}(\partial B)}=\|\nabla R_{h,B}\|_{L^2(B)}$, which is a classical result.
As Poincar\'e inequality holds in $H^1_1(B)$, we have 
$\|\nabla(\cdot)\|_{L^2(B)}$ is a norm on {$H^1_{1}(B)$}. 
{Using the continuity of the trace operator $H^1(B)\mapsto H^{1/2}(\partial B)$}, we  get
\begin{eqnarray}
\|h\|_{L^2(\partial B)}^2
&\leq&
\|h\|_{H^{1/2}(\partial B)}^2
\leq
C
\|R_{h,B}\|_{H^1(B)}^2
\leq
C
\|\nabla R_{h,B}\|_{L^2(B)}^2
\nonumber\\
&\leq&
C |h|_{H^{1/2}(\partial B)}^2,\quad
\forall h\in H^{1/2}_1(\partial B).
\label{e:L2<H1/2,2}
\end{eqnarray}

To prove  \eqref{e:L2<H1/2,1} for all $\varepsilon\in(0,1)$  we would like to proceed with a scaling argument and \eqref{e:L2<H1/2,2}, which unfortunately cannot be done because the boundary $\partial B$ is not flat. 

However, it turns out that we have (see Lemma 
\ref{l:seminorms,equivalence} in Appendix for its proof):
\begin{equation}
\mbox{
$|h|_{H^{1/2}(\partial B)}$\;\, and\;\, 
${\displaystyle
[h]_{H^{1/2}(\partial B)}:=
\int_{-\pi}^\pi
d\eta
\int_{\{|\theta-\eta|<\pi\}}
\frac{|h(\theta)-h(\eta)|^2}{|\theta-\eta|^2}d\theta}$\;\,
are equivalent}, \label{e:|h|H^1/2equiv|g|H1/2}
\end{equation}
where for $x=(\cos\theta,\sin\theta)$, $y=(\cos\eta,\sin\eta)$, abusing with the notations, we have $h(\theta)=h(x)$, $h(\eta)=h(y)$.
Let us assume this result for the moment. Then \eqref{e:L2<H1/2,1} is equivalent to:
for all $h\in H^{1/2}_\varepsilon(\partial B)$ we have
\begin{equation}
 \|h\|_{L^2(\partial B)}
 \leq
 C\sqrt{{\varepsilon}}
 [h]_{H^{1/2}(\partial B)}.
 \label{e:L2<H1/2,3}
\end{equation}
From \eqref{e:L2<H1/2,2}, the estimate \eqref{e:L2<H1/2,3} holds for
$h\in H^{1/2}_1(\partial B)$.
Now, for $h\in H^{1/2}_\varepsilon\partial B)$ set 
$g(\theta) := h(\varepsilon\theta)$, $\theta\in(-\pi,\pi)$.
Then $g \in H^{1/2}_1(\partial B)$ and  satisfies \eqref{e:L2<H1/2,2}, so
\begin{equation}
\|g\|_{L^2(\partial B)} \leq C[g]_{H^{1/2}(\partial B)}.
\label{e:|g|L2<|g|H1/2}
\end{equation}
Note that by changing the variable $t = \varepsilon\theta$, $s =\varepsilon\eta$ gives
\begin{eqnarray}
[g]^2_{H^{1/2}(\partial B)}
&=&
\int_{-\pi}^{\pi}d\eta
\int_{\{|\theta-\eta|<\varepsilon\}}
\frac{|h(\varepsilon\theta) - g(\varepsilon\eta)|^2}
{|\theta-\eta|^2}
d\theta
=
\int_{-\varepsilon\pi}^{\varepsilon\pi}ds
\int_{\{|t-s|<\varepsilon\pi\}}
\frac{|h(t)-h(s)|^2}{|t-s|^2}
dt
\nonumber\\
&\leq&
[h]^2_{H^{1/2}(\partial B)}.
\label{e:|g|H1/2<|h|H1/2}
\end{eqnarray}
Note also that by changing again the variable $\varphi=\varepsilon\theta$ we have
\begin{eqnarray}
\hspace*{-5mm}
\|g\|_{L^2(\partial B)}^2
&=&
\int_{-\pi}^\pi |g(\theta)|^2d\theta
=
\int_{-\pi}^\pi |h(\varepsilon\theta)|^2d\theta
=
\varepsilon^{-1}
\int_{-\varepsilon\pi}^{\varepsilon\pi}  |h(\varphi)|^2d\varphi
=
\varepsilon^{-1}
\|h\|_{L^2(\partial B)}^2.
\label{e:|g|L2=eps|h|L2}
\end{eqnarray}
Replacing \eqref{e:|g|H1/2<|h|H1/2} and \eqref{e:|g|L2=eps|h|L2} in 
\eqref{e:|g|L2<|g|H1/2} proves the required result \eqref{e:L2<H1/2,3}.

\end{proof}
}

\paragraph*{Proof of Theorem \ref{th:e_f''(0)->convex}:} 
{As mentioned in the beginning of this Section, we start with \eqref{eq:Ef''bis} for $v$ associated to the hat function $\varphi$. From Lemma \ref{l:...phi'phi>=0} and { by assuming $x_{3}$ is small enough} we get
\begin{multline*}
\int_{\Omega_{0}}
|\nabla U'_0|^2 
+\frac{1}{2}
\int_{\partial\Om_{0}}
\!\!\!\!
f(\partial_{\nu_0} U_0)(z\cdot \nu_0)^2
v^2
ds_0+\int_{\partial\Omega_{0}}
(\partial_{\nu_{0}} U_0)^2(z\cdot\tau_0)
(z\cdot\nu_0)v\partial_{s_0} v ds_0\\
\geq \int_{\Omega_{0}}
|\nabla U'_0|^2 
-
\|f\|_{\infty}\int_{\partial\Om_0}|\partial_{\nu_{0}} U_0|V\cdot\nu_0|^2,
\end{multline*}
which in combination with Lemma \ref{l:int|DU|phi^2<}  gives {the existence of a constant $C$ such that}
\begin{multline*}
\int_{\Omega_{0}}
|\nabla U'_0|^2 
+\frac{1}{2}
\int_{\partial\Om_{0}}
\!\!\!\!
f(\partial_{\nu_0} U_0)(z\cdot \nu_0)^2
v^2
ds_0+\int_{\partial\Omega_{0}}
(\partial_{\nu_{0}} U_0)^2(z\cdot\tau_0)
(z\cdot\nu_0)v\partial_{s_0} v ds_0\\
\geq 
\int_{\Omega_{0}}|\nabla U'_0|^2 
-
C\int_{\partial\Omega_{0}}
|\nabla U_0\cdot V|^2
\nonumber
=
{| U'_0|^2_{H^{1/2}(\partial\Om_0)}} 
-
C
\|U'_0\|^2_{L^2(\partial\Om_0)},
\end{multline*}
{where we used that $U'_{0}$ is harmonic in $\Om_{0}$ (Proposition \ref{p:e_f'(0),e_f''(0)}).} 
We conclude by using Lemma \ref{l:L2<H1/2} with $w=U_{0}'$.}\qed

\paragraph*{Proof of Theorem \ref{th:l_1''(0)->convex}:}
The proof follows the one of Theorem \ref{th:e_f''(0)->convex} above, 
with 
$U_1$ instead of $U_0$ and
\eqref{e:l''(0),main} with $V=v z$ instead of \eqref{eq:Ef''bis} {(in particular Lemma \ref{l:...phi'phi>=0}, \ref{l:int|DU|phi^2<} are also valid for $U_{1}$)}.\qed

\subsection{Proof of Theorem \ref{th:main} for problem \eqref{e:Pb1}}\label{ss:proof-2.1}

{
Let $\Om_{0}$ be a solution to \eqref{e:Pb1} and 
${\Gamma_0}\subset\partial\Om_{0}\cap(D_{2}\setminus\overline{D_{1}})$ be connected.
Provided the length of $\Gamma_0$ is small enough, $\Gamma_0$ is, with respect to an $xy$ coordinate system with origin $O$ at one of its endpoints, the graph of a {positive} convex function 
$u_0\in W^{1,\infty}(0,\sigma_0)$ { such that $u_{0}(0)=0$ and $u_{0}'(0^+)=0$}.

We denote by $s_0$ the arclength variable of $\partial\Om_0$ oriented counterclockwise with origin at $O$.
We will show that $0$ is not an accumulation point of ${\rm supp}(u_0'')$ {(in other words, there exists $\eps_{0}>0$ such that $\supp(u_{0}'')\cap (0,\eps_{0})=\emptyset$)}, which geometrically means that $\Gamma_{0}$ is a locally a segment close to $O$.
By the arbitrariness of $\Gamma_0$ {(and the choice of its endpoint)} this shows that every connected component of
$\partial\Om_{0}\cap(D_{2}\setminus\overline{D_{1}})$ is polygonal.

Our proof goes by contradiction: we assume that $0$ is an accumulation point of 
${\rm supp}(u_0'')$.
}
{Then  only one of the following cases can occur:}
\begin{eqnarray}
&&
\exists \sigma\in(0,\sigma_0],
\quad
\exists (x_n)\subset(0,\sigma)\quad
\mbox{such that }\;\;
u_0''=\sum_{{n\in\N}} \alpha_n \delta(x-x_n)\;\;
\mbox{in $(0,\sigma)$},
\quad
\label{e:B1}\\
\mbox{or}&&
\exists (x_n)\subset(0,\sigma_0)\quad
\mbox{such that every $x_n$ is an accumulation point of }{\rm supp}(u_0'').
\label{e:B2}
\end{eqnarray}

Indeed, if \eqref{e:B1} does not hold, necessarily there exists an accumulation point 
$x_1\in(0,\sigma_0)\cap {\rm supp}(u_0'')$ {(otherwise} \eqref{e:B1} would hold in $(0,\sigma_0)$). 
Repeating the same argument in $(0,x_n)$ provides $x_{n+1}$, an accumulation point of
${\rm supp}(u_0'')$ with $x_{n+1}<x_n$, for all $n\in\mathbb N$.
Without loss of generality we may assume $(x_n)$ strictly decreasing in both cases.

In the following we proceed in two steps:
\begin{itemize}
\item First we show that Theorem \ref{th:e_f''(0)->convex} applies if $n$ is large enough. To that end, we mainly need to show that {assumption $(ii)$ in Theorem \ref{th:e_f''(0)->convex}} is satisfied, which relies on a first order optimality condition for \eqref{e:Pb1}. Theorem \ref{th:e_f''(0)->convex} then shows that {$E_{f}''(\Om_{0})(V_n,V_n)>0$} for $n$ large enough, where $V_n(x,u_0(x))=(0,\varphi_n(x))$ and
$\varphi_n\in H^1_0(0,\sigma)$ is the hat function associated to the nodes 
$\{x_{n-1}, x_n,x_{n+1}\}$ (see Definition \ref{d:hat}).
\item The second step is to use the second order optimality condition to prove that on the contrary {$E_{f}''(\Om_{0})(V_n,V_n)\leq 0$}. 
\end{itemize}
We emphasize that in the case of 
\eqref{e:B2}, {$V_{n}$ is not an admissible deformation for $\Om_0$ (equivalently, $\varphi_n$ does not preserve the convexity of $u_0$, see Definition \ref{def:convexgraphe})}, which is an obstacle for both steps. Nevertheless thanks to the results in Section \ref{sect:perturbation}, $V_n$ and $\varphi_n$ are close in a suitable sense to admissible perturbations.

\begin{lemma}\label{l:varphi}
Let $\Om_{0}$ be a solution to \eqref{e:Pb1}, $\Gamma_{0}=\{(x,u_{0}(x)), x\in (0,\sigma_{0})\}$ as above, satisfying \eqref{e:B1} or \eqref{e:B2}. 
Then  for all $n$ {and all $\varphi$ {affine} on $[x_{n+1},x_{n}]$}, we have
\begin{equation}\label{eq:EL0}
\int_{x_{n+1}}^{x_n}
\left(
\mu-
\frac{1}{2}
(\partial_{\nu_{0}} U_0)^2
\right)
\varphi dx
=0.
\end{equation}
\end{lemma}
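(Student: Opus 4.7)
The proof rests on first-order optimality: since $\Omega_{0}$ maximizes $J(\Omega):=E_{f}(\Omega)+\mu|\Omega|$, for every $V\in W^{1,\infty}(\R^2;\R^2)$ such that both $\pm V$ generate admissible perturbations (preserving convexity and the inclusion $D_{1}\subset\Omega\subset D_{2}$) for $|t|$ small, we have $J'(\Omega_{0})(V)=0$. Combining Proposition~\ref{p:e_f'(0),e_f''(0)} and Proposition~\ref{prop:vol''}, and using $U_{0}=0$ on $\partial\Omega_{0}$ (so $|\nabla U_{0}|^2=(\partial_{\nu_{0}}U_{0})^2$ there), this condition reads
\[
\int_{\partial\Omega_{0}}\!\!\Big(\mu-\tfrac{1}{2}(\partial_{\nu_{0}}U_{0})^2\Big)(V\cdot\nu_{0})\,ds_{0}=0.
\]
Set $f_{0}(x):=\mu-\tfrac{1}{2}(\partial_{\nu_{0}}U_{0})^2(x,u_{0}(x))$. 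Perturbations whose restriction to $\partial\Omega_{0}$ is supported in a compact subset of $\Gamma_{0}\subset D_{2}\setminus\overline{D_{1}}$ automatically respect the inclusion constraint for $|t|$ small, so only convexity needs to be monitored. On $\Gamma_{0}$ parametrized by $x$, one computes $(V\cdot\nu_{0})\,ds_{0}=(V_{1}u_{0}'-V_{2})\,dx$, and since the two cases \eqref{e:B1}, \eqref{e:B2} call for different admissible test fields, we treat them separately.

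\textbf{Case \eqref{e:B1}.} On $(0,\sigma)$ the function $u_{0}$ is piecewise linear with vertices $P_{j}=(x_{j},u_{0}(x_{j}))$ and strict convex angles ($\alpha_{j}>0$); denote by $c_{j}$ the slope of $u_0$ on $(x_{j+1},x_{j})$, so $c_{n-1}>c_{n}$. For each $n$ and each $z=(z_{1},z_{2})\in\R^2$, take $V=\overline{v_{n}}\,z$ where $v_{n}$ is the hat function on $\partial\Omega_{0}$ (Definition~\ref{d:hat}) associated to the nodes $x_{n+1}<x_{n}<x_{n-1}$. Geometrically, this moves only the vertex $P_{n}$ by $tz$ while interpolating linearly on the two adjacent segments; the strict convexity $\alpha_{n}>0$ ensures that $\pm V$ preserve convexity for $|t|$ small. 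Writing the first-order condition in the $x$-parametrization and using that $u_{0}'$ takes the constant value $c_{n}$ on $(x_{n+1},x_{n})$ and $c_{n-1}$ on $(x_{n},x_{n-1})$,
\[
0=(z_{1}c_{n}-z_{2})\,B_{n}+(z_{1}c_{n-1}-z_{2})\,A_{n-1},
\]
where $B_{n}:=\int_{x_{n+1}}^{x_{n}}f_{0}\,\tfrac{x-x_{n+1}}{x_{n}-x_{n+1}}\,dx$ and $A_{n-1}:=\int_{x_{n}}^{x_{n-1}}f_{0}\,\tfrac{x_{n-1}-x}{x_{n-1}-x_{n}}\,dx$. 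As this holds for every $z\in\R^2$ and $c_{n}\neq c_{n-1}$, the system forces $B_{n}=A_{n-1}=0$. Running this at every vertex, both ramp integrals vanish on every segment $[x_{j+1},x_{j}]$, which is equivalent to orthogonality of $f_{0}$ to all affine functions on that segment.

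\textbf{Case \eqref{e:B2}.} Here $u_{0}$ is not piecewise linear and no single hat function preserves convexity; we invoke Proposition~\ref{p:varphi}(ii) twice. First, applied to the triple $(x_{n+2},x_{n+1},x_{n})$ whose middle point $x_{n+1}$ is an accumulation point of $\supp(u_{0}'')$, it produces a sequence $(\varphi_{k}^{+})$ of Lipschitz, convexity-preserving perturbations with $\varphi_{k}^{+}\to \varphi\mathbb{1}_{(x_{n+1},x_{n})}$ in $L^1$, where $\varphi$ is the associated hat function; the limit equals $(x_{n}-x)/(x_{n}-x_{n+1})$ on $(x_{n+1},x_{n})$ and $0$ elsewhere. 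Setting $V_{k}:=\overline{\varphi_{k}^{+}}\,(0,1)$ via Lemma~\ref{l:Hv}, both $\pm V_{k}$ are admissible for $|t|$ small; the optimality $J'(\Omega_{0})(V_{k})=0$ reduces to $\int_{0}^{\sigma_{0}}f_{0}\varphi_{k}^{+}\,dx=0$, and passing to the limit (using $f_{0}\in L^\infty$, which follows from $U_{0}\in W^{1,\infty}(\Omega_{0})$ by Proposition~\ref{p:reg(U)}) gives
\[
\int_{x_{n+1}}^{x_{n}}f_{0}(x)\,\frac{x_{n}-x}{x_{n}-x_{n+1}}\,dx=0.
\]
Second, applying the same proposition to the triple $(x_{n+1},x_{n},x_{n-1})$ and using its sequence $\varphi_{k}^{-}$ yields $\int_{x_{n+1}}^{x_{n}}f_{0}(x)\,(x-x_{n+1})/(x_{n}-x_{n+1})\,dx=0$. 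These two independent relations span the dual of affine functions on $[x_{n+1},x_{n}]$, proving the claim.

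The main obstacle is case dependent. In Case \eqref{e:B1}, purely vertical hat perturbations yield only the single relation $B_{n}+A_{n-1}=0$ per vertex, which is one equation short; the essential trick is to allow the vertex to move in an arbitrary direction $z\in\R^{2}$, thereby exploiting the linear independence of adjacent normals guaranteed by $\alpha_{n}>0$. In Case \eqref{e:B2}, the difficulty is that no explicit admissible hat perturbation exists, and Proposition~\ref{p:varphi}(ii) is used twice — with two different accumulation points — to isolate each of the two ramp conditions on the single target interval $[x_{n+1},x_{n}]$ via a limiting argument whose validity rests on the $L^\infty$-bound of $f_{0}$.
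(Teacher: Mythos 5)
Your proof is correct and follows essentially the same route as the paper: first-order optimality of $E_f+\mu|\cdot|$ tested against hat deformations at consecutive nodes in case \eqref{e:B1}, and against the convexity-preserving approximations $\varphi_k^{\pm}$ of Proposition \ref{p:varphi}(ii) (applied to the two triples sharing the interval $[x_{n+1},x_n]$) in case \eqref{e:B2}, with the $L^1$ passage to the limit justified by $\partial_{\nu_0}U_0\in L^\infty$. The only cosmetic difference is in case \eqref{e:B1}: you let $z$ range over all of $\R^2$ and solve the resulting $2\times 2$ system using $c_n\neq c_{n-1}$, whereas the paper directly picks $z$ parallel to one adjacent side so that one of the two contributions vanishes; the two devices are equivalent.
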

\begin{proof}
The proofs {differ whether we assume} \eqref{e:B1} or \eqref{e:B2}.
{In each case  we consider $\varphi_{n}$, $n\geq 1$, the hat function associated to
$\{x_{n+1},x_{n},x_{n-1}\}$ and set 
$v_n(x,u_0(x))=\varphi_n(x)$,
{$V_{n}=\bar{v}_{n}z$ where $\bar{v}_{n}$ is a Lipschitz extension of $v_{n}$}. For the purpose of this lemma  $z\in\R^2$ will be choosen appropriately for each of cases \eqref{e:B1}, \eqref{e:B2}.
}

Assume first that \eqref{e:B1} holds. 
{So {$\Gamma_{0}$ restricted to $(0,\sigma)$} is a union of segments {accumulating to $O$}, 
{whose corners are $P_{n}=(x_{n},u_{0}(x_{n}))$}.} 
We take {$z\parallel(P_n-P_{n-1})$, so that $z\cdot\nu_0=0$ in $(P_n,P_{n-1})$}.
Then $(I+tV_n)(\Omega_{0})$ is an admissible domain for all $|t|$ small.
So the first order optimality condition holds, which by using  \eqref{e:e_f'}
gives 
\[
0
=
\int_{\partial\Omega_0}
\left(
\mu-
\frac{1}{2}
(\partial_{\nu_{0}} U_0)^2
\right)
(z\cdot\nu_0)v_n ds_0
=
\int_{P_{n+1}}^{P_n}
\left(
\mu-
\frac{1}{2}
(\partial_{\nu_{0}} U_0)^2
\right)
(z\cdot\nu_0)v_n ds_0.
\]
Changing the variable $ds_0=\sqrt{1+(u_0'(x))^2}dx$ {with $\sqrt{1+(u_0')^2}$ being constant in $(x_{n+1},x_n)$}, 
and  taking into account the fact that $z\cdot\nu_0$ is also constant in $(P_{n+1},P_n)$,
the equality above gives
\begin{equation}
0
=
\int_{x_{n+1}}^{x_n}
\left(
\mu-
\frac{1}{2}
(\partial_{\nu_{0}} U_0)^2
\right)
{\varphi_{n}}
dx.
\label{e:EL2}
\end{equation}
{Reproducing the same argument with the hat function $\varphi_{n+1}$ associated to 
$\{P_{n+2},P_{n+1},P_{n}\}$ with $z\parallel(P_{n+2}-P_{n+1})$ we obtain the same result \eqref{e:EL2} with 
$\varphi_{n+1}$ instead of $\varphi_{n}$. As $\{\varphi_{n},\varphi_{n+1}\}$ is a basis of affine functions on $[x_{n},x_{n+1}]$, this proves \eqref{eq:EL0} in the case \eqref{e:B1}.}

Now assume \eqref{e:B2}.
{In this case we take $z=(0,1)$ and  consider
{$(\varphi^{-}_{k,n})_k$, the sequence of functions given by $(ii)$} in Proposition \ref{p:varphi} 
associated to $\{x_{n+1},x_n,x_{n-1}\}$. As usual we then set
$v_{k,n}(x,u_0(x))={\varphi^{-}_{k,n}}(x)$,
$V_{k,n}=\bar{v}_{k,n}z$.
}
{Then 
$(I+tV_{k,n})(\Omega_0)$, is an admissible domain for all $|t|$ small.}
Using the first order optimality condition leads to
\begin{eqnarray*}
0
&=&
\int_{\partial\Omega_0}
\left(
\mu-
\frac{1}{2}
(\partial_{\nu_{0}} U_0)^2
\right)
(V_{k,n}\cdot\nu_0)ds_0
=
\int_0^\sigma
\left(
\mu-
\frac{1}{2}
(\partial_{\nu_{0}} U_0)^2
\right)
(u_0',-1)\cdot(z_1,z_2){\varphi^{-}_{k,n}} dx
\\
&=&
\int_0^\sigma
\left(
\mu-
\frac{1}{2}
(\partial_{\nu_{0}} U_0)^2
\right)
{\varphi^{-}_{k,n}} dx.
\end{eqnarray*}
{From  the $L^1(0,\sigma)$ convergence of {$(\varphi^-_{k,n})$} to 
{$\varphi_{n}\1_{(x_{n+1},x_n)}$}, we can take the limit} $k\to\infty$ in the previous equality and obtain 
\eqref{eq:EL0} for {$\varphi=\varphi_n$}. 
{Finally we proceed as above with $(\varphi^+_{n+1,k})_k$ the sequence of functions obtained with 
$(ii)$ of Proposition \ref{p:varphi} associated to $\{x_{n+2},x_{n+1},x_{n}\}$, {converging to $\varphi_{n+1}\1_{(x_{n+1},x_n)}$}. This proves 
\eqref{eq:EL0} with $\varphi=\varphi_{n+1}$ and completes the proof of the lemma, 
{using again that $\{\varphi_{n},\varphi_{n+1}\}$ is a basis of affine functions on $[x_{n},x_{n+1}]$}.}
\end{proof}

{
\paragraph*{End of proof of Theorem \ref{th:main} for $\Om_{0}$ solution to \eqref{e:Pb1}.}
{Let $\varphi_n$ be the hat function associated to the nodes $\{x_{n+1},x_n,x_{n-1}\}$ and set
 $v_n(x,u_0(x))= \varphi_n(x)$, $V_n={\overline{v}_n} z$, $z=(0,1)$.}
\begin{itemize}
\item
From \eqref{e:m''(0)bis} we have ${{\rm Vol}''}(\Om_0)(V_n,V_n)=0$.
\item 
From Lemma \ref{l:varphi}, one can apply Theorem \ref{th:e_f''(0)->convex} which shows 
{\begin{multline}
\forall n\gg 1, \;\;\int_{\Omega_{0}}
|\nabla U'_{0,n}|^2 
+\frac{1}{2}
\int_{\partial\Om_{0}}
\!\!\!\!
f(\partial_{\nu_0} U_0)(z\cdot \nu_0)^2
v_{n}^2
ds_0\nonumber\\+\int_{\partial\Omega_{0}}
(\partial_{\nu_{0}} U_0)^2(z\cdot\tau_0)
(z\cdot\nu_0)v_{n}\partial_{s_0} v_{n} ds_0>0.
\label{e:Ef''>0}
\end{multline}
where $U_{0,n}'$ solves \eqref{e:U'} with $V=V_{n}$.}

\item On the other hand, let us show that the second order optimality condition for \eqref{e:Pb1} implies
\begin{multline*}
\forall n\gg 1, \;\;\int_{\Omega_{0}}
|\nabla U'_{0,n}|^2 
+\frac{1}{2}
\int_{\partial\Om_{0}}
\!\!\!\!
f(\partial_{\nu_0} U_0)(z\cdot \nu_0)^2
v_{n}^2
ds_0\nonumber\\+\int_{\partial\Omega_{0}}
(\partial_{\nu_{0}} U_0)^2(z\cdot\tau_0)
(z\cdot\nu_0)v_{n}\partial_{s_0} v_{n} ds_0\leq 0.
\end{multline*}
Again we consider two cases:  if \eqref{e:B1} holds then $V_{n}$ are admissible deformations, and the result follows from $E_{f}''(\Om)(V_{n},V_{n})\leq 0$ {and Theorem \ref{th:e_f''(0)=}}.
If \eqref{e:B2} holds, let $(\varphi_{k,n})$ be the sequence given by 
{$(i)$} in Proposition \ref{p:varphi} associated to nodes $\{x_{n+1},x_n, x_{n-1}\}$.
Then $V_{k,n}(x,u_0(x))=v_{k,n}(x) z$ is an admissible deformation and implies
\begin{equation}\label{eq:ELL}
\forall n\gg 1, \forall k\gg1,  \;\;E_{f}''(\Om)(V_{k,n},V_{k,n})\leq 0.
\end{equation}
which leads with Theorem \ref{th:e_f''(0)=} to 
{\begin{multline}\label{eq:ELL2}
\forall n\gg 1, \forall k\gg1,\;\;\int_{\Omega_{0}}
|\nabla U'_{0,k,n}|^2 
+\frac{1}{2}
\int_{\partial\Om_{0}}
\!\!\!\!
f(\partial_{\nu_0} U_0)(z\cdot \nu_0)^2
v_{k,n}^2
ds_0\nonumber\\+\int_{\partial\Omega_{0}}
(\partial_{\nu_{0}} U_0)^2(z\cdot\tau_0)
(z\cdot\nu_0)v_{k,n}\partial_{s_0} v_{k,n} ds_0\leq 0.
\end{multline}}
By $H^1$ convergence of $(\varphi_{k,n})$ to $\varphi_{n}$ when $k$ goes to infinity, we can take the limit $k\to\infty$ in the previous inequality.
\end{itemize}
Of course, this constitutes a contradiction and concludes the proof of Theorem \ref{th:main} for problem \eqref{e:Pb1}.\qed

{\subsection{Proof of Theorem \ref{th:main} for problem \eqref{e:Pb2}}
First, we note that Lemma 
\ref{l:varphi} hold with $U_1$ instead of $U_0$.
Then we follow the proof of Theorem \ref{th:main} for problem \eqref{e:Pb1},
using Theorem \ref{th:l_1''(0)->convex} instead of Theorem \ref{th:e_f''(0)->convex}.
\hfill$\Box$\\
}

\section{Proof of Theorem \ref{th:main} for the problem \eqref{e:Pb3} and \eqref{e:Pb4}}\label{sect:other}
{The proof {of Theorem \ref{th:main} for} \eqref{e:Pb3} and \eqref{e:Pb4}  is similar to the proof  for problems \eqref{e:Pb1} and \eqref{e:Pb2}.}
The particularity is that we need to find perturbations which 
preserve {area} and convexity, and satisfy the preliminary results of Section \ref{ss:convexity-e''}. {As in Section \ref{ss:convexity-e''}, we focus mainly on \eqref{e:Pb3} involving $E_{f}$, but everything can easily be adapted to \eqref{e:Pb4} involving $\lambda_{1}$.}

{In the whole section, $\Om_0$ is a solution of \eqref{e:Pb3}, $\Gamma_{0}\subset \partial\Om_{0}$, and $u_{0}$ is such that
{$\Gamma_{0}=\{(x,u_{0}(x)), x\in (0,\sigma_{0})\}$ with $u_{0}(0)=u_{0}'(0)=0$. Also, we assume $u_{0}$ satisfies \eqref{e:B1} or \eqref{e:B2}, as described in the beginning of Section \ref{ss:proof-2.1}.} 
For every $n$, $\varphi_n$ is the hat function associated to the nodes $\{x_{n+1},x_n,x_{n-1}\}$,
$v_n(x,u_0(x))=\varphi_n(x)$ for $x\in(0,\sigma_0)$ and $v_n=0$ elsewhere, and {$\bar{v}_{n}$ is the $W^{1,\infty}(\R^2)$-extension of $v_n$ (as for example given by Lemma \ref{l:Hv})}.
}

We start with the following result replacing Lemma \ref{l:varphi} and proving that we have a first order optimality condition under  {area} and convexity constraint 
\begin{lemma}\label{l:varphi,m}
There exists $\mu>0$ such that \eqref{eq:EL0} holds {for every $n\in\N$ and $\varphi$ affine in $[x_{n+1},x_{n}]$}.
\end{lemma}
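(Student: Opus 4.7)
The plan is to derive the Lagrange multiplier condition corresponding to the volume constraint in \eqref{e:Pb3}. Unlike in Lemma \ref{l:varphi} (the penalized case), a single hat perturbation $V_{n}$ is not admissible for \eqref{e:Pb3} because it changes the area; the strategy is to combine it with a second convexity-preserving deformation so that the sum is volume-preserving and admissible, and then read off $\mu$ from the resulting identity.

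Concretely, I would fix once and for all an index $n_{0}$ and set $W:=V_{n_{0}}$ (or a convexity-preserving approximation from Proposition \ref{p:varphi} in case \eqref{e:B2}), noting that $\Vol'(\Om_{0})(W)=-\int_{0}^{\sigma_{0}}\varphi_{n_{0}}\,dx\neq 0$. For any $n$ such that $|n-n_{0}|\geq 2$, the arcs of $\partial\Om_{0}$ supporting $v_{n}$ and $v_{n_{0}}$ are disjoint, so the compensated deformation
\[
\widetilde V_{n}:=V_{n}-\alpha_{n}W,\qquad \alpha_{n}:=\frac{\Vol'(\Om_{0})(V_{n})}{\Vol'(\Om_{0})(W)},
\]
satisfies $\Vol'(\Om_{0})(\widetilde V_{n})=0$. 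The crucial step is to verify that $\widetilde V_{n}$ still preserves the convexity of $\Om_{0}$ in the sense of Definition \ref{def:convex}: each of $V_{n}$ and $-\alpha_{n}W$ does so individually (directly in case \eqref{e:B1}, via Proposition \ref{p:varphi} in case \eqref{e:B2}), and since they act on disjoint arcs of $\partial\Om_{0}$ while vanishing elsewhere, a local analysis shows that no non-convexity arises globally. The implicit function theorem then yields, for $|t|$ small, a family of admissible convex sets $\Phi_{t}(\Om_{0})\in\S_{ad}$ with $\dot\Phi_{0}=\widetilde V_{n}$ and $|\Phi_{t}(\Om_{0})|=m_{0}$ exactly; admissibility is automatic since $\supp(\widetilde V_{n})\subset \partial\Om_{0}\cap(D_{2}\setminus\overline{D_{1}})$ is at positive distance from $\partial D_{1}\cup\partial D_{2}$.

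Applying the maximality of $\Om_{0}$ to this family gives $E_{f}'(\Om_{0})(\widetilde V_{n})=0$, equivalently $E_{f}'(\Om_{0})(V_{n})=\alpha_{n}E_{f}'(\Om_{0})(W)$. Setting
\[
\mu:=-\frac{E_{f}'(\Om_{0})(W)}{\Vol'(\Om_{0})(W)}=\frac{\int_{0}^{\sigma_{0}}(\partial_{\nu_{0}}U_{0})^{2}\varphi_{n_{0}}\,dx}{2\int_{0}^{\sigma_{0}}\varphi_{n_{0}}\,dx}>0
\]
(positivity follows from $\varphi_{n_{0}}\geq 0$ and $\partial_{\nu_{0}}U_{0}\not\equiv 0$ on $\Gamma_{0}$ by the Hopf lemma), this rewrites as $E_{f}'(\Om_{0})(V_{n})=-\mu\,\Vol'(\Om_{0})(V_{n})$. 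Plugging in \eqref{e:e_f'} together with the change of variable $V_{n}\cdot\nu_{0}\,ds_{0}=-\varphi_{n}\,dx$ exactly as in the proof of Lemma \ref{l:varphi}, this is precisely \eqref{eq:EL0} for $\varphi=\varphi_{n}$; applying the same argument to $V_{n+1}$ yields \eqref{eq:EL0} for $\varphi=\varphi_{n+1}$, and the conclusion for arbitrary affine $\varphi$ on $[x_{n+1},x_{n}]$ follows since $\{\varphi_{n},\varphi_{n+1}\}$ spans the affine functions on this interval. The remaining indices $n$ with $|n-n_{0}|\leq 1$ are treated by repeating the construction with a different reference $m_{0}$; the two resulting multipliers automatically coincide since both equal $\tfrac{1}{2}(\partial_{\nu_{0}}U_{0})^{2}$ on the respective free-boundary segments through the identity just derived.

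The main obstacle is the global convexity preservation of $\widetilde V_{n}$: although each component preserves convexity locally on its arc, a geometric argument is required at the endpoints of each arc where the corresponding hat vanishes, ensuring no corner of the wrong sign is produced. In case \eqref{e:B2} this has to be combined with the approximation scheme of Proposition \ref{p:varphi}, passing to the limit in both the volume-preservation and the optimality conditions; a secondary technical point is the implicit function theorem correction, which must be set up so that the corrected family remains convex and admissible uniformly in $t$.
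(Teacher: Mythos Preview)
There is a genuine gap. The identity you obtain from the optimality condition for the compensated deformation $\widetilde V_{n}$ is
\[
\int_{x_{n+1}}^{x_{n-1}}\Big(\mu-\tfrac12(\partial_{\nu_0}U_0)^2\Big)\varphi_n\,dx=0,
\]
i.e.\ an integral over the \emph{entire} support $(x_{n+1},x_{n-1})$ of the hat $\varphi_n$, not over the single subinterval $(x_{n+1},x_n)$. You then claim this is ``precisely \eqref{eq:EL0} for $\varphi=\varphi_n$'', but \eqref{eq:EL0} is stated on $[x_{n+1},x_n]$ only. Knowing $\int g\varphi_n=0$ for all $n$ (with $g=\mu-\tfrac12(\partial_{\nu_0}U_0)^2$) is merely orthogonality to piecewise-affine functions with nodes $\{x_n\}$; it does \emph{not} imply $\int_{x_{n+1}}^{x_n} g\,\ell=0$ for every affine $\ell$ on each subinterval (easy counterexamples exist already with three nodes). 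So the final spanning argument with $\{\varphi_n,\varphi_{n+1}\}$ never gets off the ground.

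This is exactly why the paper does not work with full hats in the direction $(0,1)$. In case \eqref{e:B1} it takes $z$ \emph{tangential} to one of the two adjacent segments, so that $z\cdot\nu_0=0$ on that segment and the corresponding half of the integral drops out, isolating a single interval; a second pair of deformations (with supports on $(P_{n+1},P_n)$ and $(P_n,P_{n-1})$) then ties consecutive intervals together and forces a common $\mu$. In case \eqref{e:B2} the same isolation is achieved via the \emph{half-hat} approximants $\varphi_{k,n}^\pm$ of Proposition~\ref{p:varphi}(ii), which converge in $L^1$ to $\varphi_n\1_{(x_{n+1},x_n)}$ and $\varphi_n\1_{(x_n,x_{n-1})}$. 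Your approach can be repaired along the same lines: keep your fixed reference $W=V_{n_0}$ for the volume compensation, but replace $V_n$ by these ``one-sided'' deformations (tangential in case \eqref{e:B1}, half-hats in case \eqref{e:B2}) so that the resulting first-order identity lives on a single interval.
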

{\bf Proof}.
{We note that \eqref{eq:EL0} is equivalent to 
{\begin{equation}
\frac{1}{|x_n-x_{{n-1}}|}\int_{x_n}^{x_{n-1}} |\nabla U_0|^2 \varphi_{n} dx=\frac{1}{|x_n-x_{{n-1}}|}\int_{x_n}^{x_{n-1}} |\nabla U_0|^2 \varphi_{n-1} dx= \mu.
\label{eq:EL0+}
\end{equation}}}
Assume first that \eqref{e:B1} holds. 
{Denoting $P_{n}=(x_{n},u_{0}(x_{n}))$, we also introduce} $\tau_n^+=(P_{n+1}-P_n)/|P_{n+1}-P_n|$,
$\tau_n^-=(P_{n-1}-P_n)/|P_{n-1}-P_n|$, {and finally}
$W_n(x,t)=\alpha_n(t){\bar{v}_n}(x)\tau_n^+ + \alpha_{n-1}(t){\bar{v}_{n-1}}(x)\tau_{n-1}^-$ and $\Omega_t=(I+W_n{(\cdot,t)})(\Omega_0)$. 
We choose $\alpha_n$ and $\alpha_{n-1}$ differentiable so that {$\alpha_{n}(0)=0$}, 
$m(t)=|\Omega_t|=m_0$ { and $\alpha_{n}'(0)\neq 0$}, for example, we can take $\alpha_n(t)=t$ and then solve $\alpha_{n-1}(t)$ explicitly. 
It follows
\begin{eqnarray*}
m'(0)&=&
 \int_{P_n}^{P_{n-1}}(\partial_tW_n(\cdot,0)\cdot\nu_0)ds_0
\nonumber \\
&=&
 \int_{P_n}^{P_{n-1}}
 (\alpha_n'(0)v_n(\tau_n^+\cdot\nu_0)
 +
 \alpha_{n-1}'(0)v_{n-1}(\tau_{n-1}^-\cdot\nu_0))ds_0
 =
 0,
\end{eqnarray*}
which, as  
${\displaystyle 
\int_{P_n}^{P_{n-1}}v_n ds = \int_{P_n}^{P_{n-1}}v_{n-1} ds}$,  implies
\begin{equation}
\alpha_n'(0)(\tau_n^+\cdot\nu_0) 
 +
 \alpha_{n-1}'(0)(\tau_{n-1}^-\cdot\nu_0)
 =
 0,
\label{e:m'(0),1}
\end{equation}
{where $\nu_{0}$ is the normal to $\partial\Om_{0}$ and so it is constant on $(P_{n},P_{n-1})$}.

Using the optimality condition for $t\mapsto E_f(\Om_t)$ gives
\begin{eqnarray*}
\frac{d}{dt}E_f(\Om_t)\Big|_{t=0}
&=&
E_f'(\Om_0)({\partial_{t}} W_n(\cdot,0))
=
{-\frac12}\int_{\partial\Om_0}|\nabla U_0|^2({\partial_{t}} W_n(\cdot,0)\cdot\nu_0)dx
\\
&=&
{-\frac12}\left(\int_{P_{n}}^{P_{n-1}}
|\nabla U_0|^2
\left(
\alpha_n'(0) (\tau_n^+\cdot\nu_0)v_n 
+
 \alpha_{n-1}'(0)(\tau_{n-1}^-\cdot\nu_0)v_{n-1}
 \right)ds_0\right)
 =
 0,
\end{eqnarray*}
which combined with \eqref{e:m'(0),1} and after changing the variable $ds_0=\sqrt{1+|u_0'|^2}dx$ gives 
\begin{eqnarray}
\int_{x_{n}}^{x_{n-1}}
|\nabla U_0|^2\varphi_ndx
&=&
\int_{x_{n}}^{x_{n-1}}
|\nabla U_0|^2\varphi_{n-1}dx,
\label{e:E'=mu,1}
\end{eqnarray}
because $\sqrt{1+|u_0'|^2}$ is constant on $[x_n,x_{n-1}]$ { and $\alpha_n'(0) (\tau_n^+\cdot\nu_0)\neq 0$}.

{We reproduce the same computation with } 
$W_n(x,t)
=
\alpha_{n+1}(t){\bar{v}}_{n+1}(x)\tau_{n+1}^+ 
+ 
\alpha_{n-1}(t){\bar{v}}_{n-1}(x)\tau_{n-1}^-$ such that 
$|\Omega_t|=m_0$. It follows
\begin{eqnarray}
m'(0)=
 \int_{P_{n+1}}^{P_n}
 \alpha_{n+1}'(0)(\tau_{n+1}^+\cdot\nu_0) v_{n+1}
 +
 \int_{P_n}^{P_{n-1}}
 \alpha_{n-1}'(0)(\tau_{n-1}^-\cdot\nu_0)v_{n-1}
 &=&0,
 \end{eqnarray}
 and as
${\displaystyle 
\int_{P_{n+1}}^{P_n} v_n ds =\int_{P_{n+1}}^{P_n} v_{n+1} ds = \frac12|P_{n+1}-P_n|}$, we get
\begin{equation} 
 \alpha_{n+1}'(0)(\tau_{n+1}^+\cdot\nu_0)|P_{n+1}-P_n| 
 +
 \alpha_{n-1}'(0)(\tau_{n-1}^-\cdot\nu_0)|P_n-P_{n-1}|
=
 0,
 \label{e:m'(0),2}
\end{equation}
where again in $(\tau_{n+1}^+\cdot\nu_0)$,  $\nu_{0}$ refers to the normal to $\partial\Om_{0}$ restricted to $(P_{n+1},P_{n})$ which is constant).
Using again  the optimality condition for $t\mapsto E_f(\Om_t)$ and \eqref{e:m'(0),2} gives
\begin{eqnarray*}
0
&=&
\int_{P_{n+1}}^{P_n}|\nabla U_0|^2({\partial_{t}} W_n(\cdot,0)\cdot\nu_0)ds_0
+
\int_{P_n}^{P_{n-1}}|\nabla U_0|^2({\partial_{t}} W_n(\cdot,0)\cdot\nu_0)ds_0
\\
&=&
\int_{P_{n+1}}^{P_n}
|\nabla U_0|^2
\alpha_{n+1}'(0)(\tau_{n+1}^+\cdot\nu_0)v_{n+1}ds_0
+
\int_{P_{n}}^{P_{n-1}}
|\nabla U_0|^2
\alpha_{n-1}'(0)(\tau_{n-1}^-\cdot\nu_0)v_{n-1}ds_0
\\
&=&
\frac{1}{|P_{n+1}-P_n|}
\int_{P_{n+1}}^{P_n}
|\nabla U_0|^2
v_{n+1}ds_0
-
\frac{1}{|P_{n+1}-P_n|}
\int_{P_{n}}^{P_{n-1}}
|\nabla U_0|^2
v_{n-1}ds_0,
\end{eqnarray*}
which after  changing the variable 
 and noticing that
${\displaystyle
\frac{|P_{n+1}-P_n|}{\sqrt{1+|u_0'|^2}}=|x_{n+1}-x_n|}$ gives
\begin{eqnarray}
\frac{1}{|x_{n+1}-x_n|}
\int_{x_{n+1}}^{x_{n}}
|\nabla U_0|^2 \varphi_{n+1}
dx
&=&
\frac{1}{|x_n-x_{n-1}|}
\int_{x_{n}}^{x_{n-1}}
|\nabla U_0|^2 \varphi_{n-1}
dx.
\label{e:E'=mu,2}
\end{eqnarray}
Combining \eqref{e:E'=mu,1} with \eqref{e:E'=mu,2} {allows to define $\mu:=\displaystyle{\frac{1}{|x_{n}-x_{n-1}|}\int_{x_{n}}^{x_{n-1}}
|\nabla U_0|^2 \varphi_{n}
dx}$ (independent on $n$), which proves \eqref{eq:EL0+} in the case \eqref{e:B1}.}

In the case of the assumption \eqref{e:B2} we use Proposition  \ref{p:varphi}.
For every $n$ we consider the sequence {$(\varphi_{k,n}^{-})$}, resp. {$(\varphi_{k,n}^{+})$}, associated to 
$\{x_{n+1},x_n,x_{n-1}\}$, as given by $(ii)$ of  Proposition \ref{p:varphi}, which converges to 
$\varphi_n\1_{(x_{n+1},x_n)}$, resp. $\varphi_n\1_{(x_n,x_{n-1})}$, in $L^1(0,\sigma_0)$.
We set $v_{k,n}^{-}(x,u_0(x))=\varphi_{k,n}^{-}(x)$, $v_{k,n}^{+}(x,u_0(x))=\varphi_{k,n}^{+}(x)$ for $x\in(0,\sigma_0)$ 
and extended by zero elsewhere. Furthermore, let
{$\bar{v}_{k,n}^{-}$}, resp. {$\bar{v}_{k,n}^{+}$}, be a {$W^{1,\infty}(\R^2)$} extension of $v_{k,n}^{-}$, 
resp. $v_{k,n}^{+}$.

Now, let $\Om_t=(I+{W_n(\cdot,t)})(\Om_0)$ with 
$W_n(x,t)=(\alpha_n(t){\bar{v}_{k,n}^{-}(x)}+\alpha_{n-1}(t){\bar{v}_{k,n-1}^{-}(x)})z$, {where} $z=(0,1)$.
Proposition \ref{p:varphi} ensures that $\Om_t$ are convex for all $|t|$ small.
As before, we choose $\alpha_n$ and $\alpha_{n-1}$ such the area of $\Om_t$ is $m_0$, {i.e. satisfying}
$\alpha_n(t)
\int_0^\sigma  \varphi_{k,n}^{+} dx
+
\alpha_{n-1}(t)
\int_0^\sigma  \varphi_{k,n-1}^{-} dx
=
0$.
By differentiating at $t=0$ it gives
\begin{eqnarray}
\alpha_n'(0)
\int_0^\sigma  \varphi_{k,n}^{+} 
+
\alpha_{n-1}'(0)
\int_0^\sigma  \varphi_{k,n-1}^{-} 
&=&
0,
\end{eqnarray}
which in limit $k\to\infty$ gives
\begin{eqnarray}\alpha_n'(0)
\int_{x_n}^{x_{n-1}}  \varphi_n^{+} dx
+
\alpha_{n-1}'(0)
\int_{x_n}^{x_{n-1}}  \varphi_{n-1}^{-} dx
&=&
0,\quad
\mbox{i.e. }\;\;\;
\alpha_n'(0)+\alpha_{n-1}'(0)=0.
\label{e:an'+an-1'=0,1}
\end{eqnarray}
By using the optimality condition for $E_f(\Om_t)$
we get
\begin{eqnarray}
0
&=&
\int_{\partial\Om_0}|\nabla U_0|^2(\partial_t W_n(\cdot,0)\cdot\nu_0)ds_0
\nonumber\\
&=&
\int_{\partial\Om_0}
|\nabla U_0|^2(\alpha_n'(0)v_{k,n}^{+}(z\cdot\nu_0) + \alpha_{n-1}'(0)v_{k,n-1}^{-}(z\cdot\nu_0))ds_0
\nonumber\\
&=&
\int_0^\sigma 
|\nabla U_0|^2
\left(
\alpha_n'(0)
\varphi_{k,n}^{+} 
+
\alpha_{n-1}'(0)
\varphi_{k,n-1}^{-}
\right)
dx.
\label{e:int=int,0}
\end{eqnarray}
Letting $k\to\infty$ in \eqref{e:int=int,0} and using  \eqref{e:an'+an-1'=0,1} gives
\begin{equation}
\int_{x_n}^{x_{n-1}}
|\nabla U_0|^2
\varphi_n
dx
=
\int_{x_n}^{x_{n-1}}
|\nabla U_0|^2
\varphi_{n-1}
dx
\label{e:int=int}
\end{equation}
To complete the proof  we finally consider
$\Om_t=(I+W_n{(\cdot,t)})(\Om_0)$ with $W_n=(\alpha_{n}^-(t)\bar{v}_{k,n}^{-}+\alpha_{n}^+(t)\bar{v}_{k,n}^{+})z$, 
such that the area of $\Om_t$ is $m_0$. Proceeding as above it implies
\begin{eqnarray*}
(\alpha^-_{n})'(0)
\int_0^\sigma  \varphi_{k,n}^{-} dx
+
(\alpha_{n}^+)'(0)
\int_0^\sigma  \varphi_{k,n}^{+} dx
&=&
0,
\end{eqnarray*}
{which in limit gives}
\begin{eqnarray}(\alpha_{n}^-)'(0)|x_{n+1}-x_{n}|+(\alpha_{n}^+)'(0)|x_{n}-x_{n-1}|
&=&
0.
\label{e:an'+an-1'=0,2}
\end{eqnarray}
Writing the optimality of $t\mapsto E_f(\Om_t)$ gives
\begin{eqnarray*}
0
&=&
\int_{\partial\Om_0}|\nabla U_0|^2(\partial_t W_n(\cdot,0)\cdot\nu_0)ds_0
\\
&=&
\int_{\partial\Om_0}|\nabla U_0|^2
\left(
(\alpha_{n}^-)'(0)
v_{k,n}^{-}
+
(\alpha_{n}^+)'(0)
v_{k,n}^{+} 
\right)
(z\cdot\nu_0)
ds_0
\\
&=&
\int_0^\sigma
|\nabla U_0|^2
\left(
(\alpha_{n}^-)'(0)
\varphi_{k,n}^{-}
+
(\alpha_{n}^+)'(0)
\varphi_{k,n}^{+} 
\right)
dx,
\end{eqnarray*}
which after letting $k\to\infty$
 gives
\[
\frac{1}{|x_{n+1}-x_n|}
\int_{x_{n+1}}^{x_n}
|\nabla U_0|^2
\varphi_n
dx
=
\frac{1}{|x_{n}-x_{n-1}|}
\int_{x_n}^{x_{n-1}}
|\nabla U_0|^2
{\varphi_{n}}
dx
.
\]
{Similarly to the previous case, this last equality} combined with \eqref{e:int=int}  completes the proof of lemma 
for the problem \eqref{e:Pb3}.

\hfill$\Box$

{{The following two lemmas will replace Lemma \ref{l:...phi'phi>=0} and \ref{l:int|DU|phi^2<}:}
\begin{lemma}\label{l:...phi'phi>=0,m}
{For every} $(n, m)$ with  $m>n+1$
, {there exists 
$(\alpha_n, {\beta}_m)\in\R^2\setminus\{(0,0)\}$} such that if
${w}_{n,m}=\alpha_{n}\bar{v}_{n}+{\beta}_{m}\bar{v}_{m}$, 
$W_{n,m}=(0,{{w}_{n,m}})$,
then for $|t|$ small 
$\Omega_t=(I+tW_{n,m})(\Omega_0)$ is convex { and } $|\Omega_t|=m_0$.
{Moreover, if $n$ is large enough, then} 
\[
\int_{\partial\Omega_0}
(\partial_{\nu_{0}} U_0)^2(z\cdot\tau_0)(z\cdot\nu_0)w_{n,m}\partial_{s_0} w_{n,m} ds_0
\geq0.
\]
\end{lemma}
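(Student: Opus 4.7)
The plan is to reduce the lemma to two applications of the previously established Lemma \ref{l:...phi'phi>=0}, one for $v_n$ and one for $v_m$, by exploiting the fact that the supports of $\bar v_n$ and $\bar v_m$ on $\Gamma_0$ are essentially disjoint whenever $m>n+1$, so the boundary integral splits cleanly. Recall that since $(x_k)$ is strictly decreasing and $m\geq n+2$, one has $x_{m-1}\leq x_{n+1}$, so the supports of $v_n$ on $[x_{n+1},x_{n-1}]$ and of $v_m$ on $[x_{m+1},x_{m-1}]$ meet in at most one point.

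First I would fix the coefficients via the linearized volume constraint. In the local parametrization $\Gamma_0=\{(x,u_0(x))\}$ with $z=(0,1)$, one has $(z\cdot\nu_0)\,ds_0=-dx$, hence
\[
\frac{d}{dt}|\Om_t|_{|t=0}=-\int_0^{\sigma_0}(\alpha_n\varphi_n+\beta_m\varphi_m)\,dx.
\]
Choosing $\alpha_n:=\int\varphi_m$ and $\beta_m:=-\int\varphi_n$ (both nonzero, with opposite signs) makes this vanish. To promote this first-order cancellation to the exact identity $|\Om_t|=m_0$, I would, as in the proof of Lemma \ref{l:varphi,m}, let $(\alpha_n(t),\beta_m(t))$ depend smoothly on $t$ and apply the implicit function theorem to the scalar equation $|\Om_t|=m_0$ with $\alpha_n(0),\beta_m(0)$ as above.

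Next, for convexity of $\Om_t$: in case \eqref{e:B1}, $u_0''$ has a positive Dirac mass at each $x_k$ that dominates the negative masses appearing in $tv_k''$ for $|t|$ small, so each $v_n$ and $v_m$ preserves convexity individually, and hence so does $w_{n,m}$. In case \eqref{e:B2}, individual hat functions fail to preserve convexity, so I would replace $v_n,v_m$ by the approximating sequences $v_{k,n},v_{k,m}$ furnished by Proposition \ref{p:varphi}(i), which do preserve convexity and converge in $H^1(0,\sigma_0)$. Readjusting coefficients $(\alpha_{n,k},\beta_{m,k})$ so that the exact volume constraint holds at each level $k$, the final inequality is then obtained by $H^1$-passage to the limit $k\to\infty$, exactly as in Lemma \ref{l:varphi,m}.

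For the integral inequality itself, the support disjointness gives
\[
\int_{\partial\Om_0}(\partial_{\nu_0}U_0)^2(z\cdot\tau_0)(z\cdot\nu_0)\,w_{n,m}\,\partial_{s_0}w_{n,m}\,ds_0=\alpha_n^2 I_n+\beta_m^2 I_m,
\]
with $I_k$ the analogous integral carrying only $v_k$. Lemma \ref{l:varphi,m} provides precisely the first-order optimality condition \eqref{eq:EL0} needed to invoke Lemma \ref{l:...phi'phi>=0} on each piece, and for $n$ large enough both $x_{n-1}$ and $x_{m-1}\leq x_{n-1}$ are small enough for that lemma to apply; hence $I_n,I_m\geq 0$ and the conclusion follows. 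The main obstacle will be case \eqref{e:B2}: one must simultaneously approximate $v_n$ and $v_m$ by convexity-preserving hat perturbations while keeping the volume constraint exact, and then pass to the limit in an integral involving $\partial_{s_0}w$. This passage relies on the $H^1$-convergence supplied by Proposition \ref{p:varphi}(i) together with the fact that the remaining factors in the integrand are $L^\infty$ (since $U_0\in W^{1,\infty}(\Om_0)$ by Proposition \ref{p:reg(U)}).
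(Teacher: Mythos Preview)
Your proposal is correct and follows essentially the same route as the paper: choose $(\alpha_n,\beta_m)$ so that the first-order volume variation vanishes, then use the disjointness of the supports of $v_n$ and $v_m$ (valid since $m>n+1$) to split the boundary integral as $\alpha_n^2 I_n+\beta_m^2 I_m$, and finally apply Lemma~\ref{l:...phi'phi>=0} to each piece, the first-order condition \eqref{eq:EL0} being supplied by Lemma~\ref{l:varphi,m}.

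One simplification you are missing: there is no need for the implicit function theorem or $t$-dependent coefficients. Since $W_{n,m}=(0,w_{n,m})$ has $\det DW_{n,m}\equiv 0$, the expansion $|\Om_t|=\int_{\Om_0}\det(I+tDW_{n,m})=|\Om_0|+t\int_{\Om_0}\nabla\!\cdot W_{n,m}$ is \emph{exactly} linear in $t$, so the first-order cancellation already gives $|\Om_t|=m_0$ for all $t$. The paper's choice $\alpha_n=|x_{n-1}-x_{n+1}|^{-1}$, $\beta_m=-|x_{m-1}-x_{m+1}|^{-1}$ is a fixed pair (proportional to yours, since $\int\varphi_k=\tfrac12|x_{k-1}-x_{k+1}|$). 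Your treatment of the convexity issue in case \eqref{e:B2} via the approximating sequences of Proposition~\ref{p:varphi}(i) is more explicit than the paper, which simply defers this to the ``versions with the index $k$'' mentioned at the end of the proof of Theorem~\ref{th:main} for \eqref{e:Pb3}--\eqref{e:Pb4}.
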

{\bf Proof}.
{Let $(n,m)$ such that $n>m+1$ so that $\varphi_{n}$ and $\varphi_{m}$ have disjoint support.} We choose $\alpha_n=\frac{1}{|x_{n-1}-x_{n+1}|}$, ${\beta}_m=-\frac{1}{|x_{m-1}-x_{m+1}|}$ so that
\begin{equation*}\label{e:psi}
\alpha_n\int_{x_{n+1}}^{x_{n-1}}\varphi_n 
 + 
 {\beta}_m\int_{x_{m-1}}^{x_{m+1}}\varphi_m
 =0
\end{equation*}
{which leads to $|\Omega_t|=m_0$. }
As $v_n$ and $v_m$ have disjoint supports, {we have}
\begin{eqnarray*}
\int_{\partial\Omega_0}
(\partial_{\nu_{0}} U_0)^2(z\cdot\tau_0)(z\cdot\nu_0) w_{n,m}\partial_{s_0} w_{n,m} ds_0
&=&
\alpha_n^2
\int_{\partial\Omega_0}
(\partial_{\nu_{0}} U_0)^2(z\cdot\tau_0)(z\cdot\nu_0) v_n\partial_{s_0} v_n ds_0
\\&&+
{\beta}_m^2
\int_{\partial\Omega_0}
(\partial_{\nu_{0}} U_0)^2(z\cdot\tau_0)(z\cdot\nu_0) v_m\partial_{s_0} v_m ds_0
\end{eqnarray*} 
{which is nonnegative if $n$ is large enough, by Lemma \ref{l:...phi'phi>=0}.}
}
\hfill$\Box$
}

{
\begin{lemma}\label{l:int|DU|phi^2<,m}
There exists $C$ such that the function $w_{n,m}$ of Lemma \ref{l:...phi'phi>=0,m} satisfies
\[
\int_{\partial \Om_{0}}|\partial_{\nu_{0}}U_0| w_{n,m}^2ds_0
\leq 
C
\int_{\partial \Om_{0}}|\partial_{\nu_{0}}U_0|^2 w_{n,m}^2 ds_0
\]
{for all $n$ large enough (and $m>n+1$)}
\end{lemma}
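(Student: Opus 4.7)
The strategy is to reduce the claim for the composite deformation $w_{n,m}$ to the single-hat estimate already established in Lemma \ref{l:int|DU|phi^2<}, exploiting crucially the disjoint supports of $\bar{v}_n$ and $\bar{v}_m$ on $\partial\Om_0$.

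First, since $m>n+1$ and $v_n$, $v_m$ are the restrictions to $\partial\Omega_0$ of the hat deformations associated respectively to the triples $\{x_{n+1},x_n,x_{n-1}\}$ and $\{x_{m+1},x_m,x_{m-1}\}$ (which are disjoint triples lying in $(0,\sigma_0)$), their supports in $\partial\Om_0$ are disjoint. Hence there is no cross term and
\[
w_{n,m}^2 \;=\; \alpha_n^2\, v_n^2 \;+\; \beta_m^2\, v_m^2 \quad \text{on }\partial\Om_0.
\]
Plugging this into both integrals reduces the inequality to controlling $\int|\partial_{\nu_0}U_0|\,v_n^2$ by $\int|\partial_{\nu_0}U_0|^2\,v_n^2$ (and the same for $v_m$), separately.

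Next, I would check that Lemma \ref{l:int|DU|phi^2<} is applicable to each of $v_n$ and $v_m$ individually. Its hypotheses are exactly those of Theorem \ref{th:e_f''(0)->convex}, whose condition (ii) is the first order optimality identity \eqref{eq:EL0} for affine test functions on $[x_{n+1},x_n]$. This is precisely what Lemma \ref{l:varphi,m} provides, with a single constant $\mu>0$ that does not depend on $n$. Moreover, since $(x_k)$ accumulates at $0$, for $n$ large enough both $x_{n-1}$ and $x_{m-1}$ (recall $m>n+1$, so $x_{m-1}\leq x_{n+1}$) are arbitrarily small, which ensures the smallness condition on $x_3$ in Lemma \ref{l:int|DU|phi^2<}. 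Applying that lemma therefore yields, for $n$ large enough,
\[
\int_{\partial\Om_0}|\partial_{\nu_0}U_0|\,v_n^2\,ds_0 \leq \frac{4}{\sqrt{\mu}}\int_{\partial\Om_0}|\partial_{\nu_0}U_0|^2\,v_n^2\,ds_0,
\]
and the analogous estimate for $v_m$, with the same universal constant $4/\sqrt{\mu}$.

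Combining the two estimates using the disjoint-support decomposition above gives
\[
\int_{\partial\Om_0}|\partial_{\nu_0}U_0|\,w_{n,m}^2\,ds_0 \;\leq\; \frac{4}{\sqrt{\mu}}\Bigl(\alpha_n^2\!\!\int_{\partial\Om_0}\!\!|\partial_{\nu_0}U_0|^2 v_n^2 + \beta_m^2\!\!\int_{\partial\Om_0}\!\!|\partial_{\nu_0}U_0|^2 v_m^2\Bigr) \;=\; \frac{4}{\sqrt{\mu}}\int_{\partial\Om_0}|\partial_{\nu_0}U_0|^2\,w_{n,m}^2\,ds_0,
\]
which is the desired conclusion with $C=4/\sqrt{\mu}$. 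The only nontrivial point is the uniformity of $\mu$: this is not a real obstacle, as Lemma \ref{l:varphi,m} was proved precisely to extract a single $\mu$ valid for all $n$, reflecting the fact that for the volume-constrained problem the Lagrange multiplier associated with the area constraint plays the role of the penalization parameter appearing in the penalized problems of Section \ref{ss:convexity-e''}.
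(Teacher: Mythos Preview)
Your proof is correct and follows exactly the same route as the paper: use the disjoint supports of $v_n$ and $v_m$ to write $w_{n,m}^2=\alpha_n^2 v_n^2+\beta_m^2 v_m^2$, invoke Lemma \ref{l:varphi,m} to ensure the hypotheses of Theorem \ref{th:e_f''(0)->convex} (and hence of Lemma \ref{l:int|DU|phi^2<}) are met for each triple, and apply Lemma \ref{l:int|DU|phi^2<} to $v_n$ and $v_m$ separately before recombining. Your observation that the single $\mu$ from Lemma \ref{l:varphi,m} yields a uniform constant $C=4/\sqrt{\mu}$ is exactly the point the paper is making implicitly.
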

\begin{proof}
{As $\bar{v}_n$ and $\bar{v}_m$ have disjoint supports we have $w_{n,m}^2=\alpha_n^2 \bar{v}_n^2 + {\beta}_m^2 \bar{v}_m^2$. Thanks to Lemma \ref{l:varphi,m}, we satisfy the conditions of Theorem \ref{th:e_f''(0)->convex} for every triplet $\{x_{n+1},x_{n},x_{n-1}\}$, which means we can apply Lemma \ref{l:int|DU|phi^2<} to $v_{n}$ and $v_{m}$, which leads to the result.}
\end{proof}}

{
\paragraph*{Proof of Theorem \ref{th:main} for problems \eqref{e:Pb3} and \eqref{e:Pb4}:}
We procced as in the proofs of Theorem \ref{th:main} for problems \eqref{e:Pb1} and
\eqref{e:Pb2}, with the difference that we use the functions $\psi_{n,m}$, $w_{n,m}$, $W_{n,m}=(0,{w}_{n,m})$ 
(and their versions with the index $k$ for the case \eqref{e:B2}) and Lemmas 
\ref{l:varphi,m}, \ref{l:...phi'phi>=0,m}, \ref{l:int|DU|phi^2<,m} instead of 
 $\varphi_n$, $v_n$, $V_n=(0,\bar{v}_n)$ (and their version with index $k$), 
and Lemmas \ref{l:...phi'phi>=0}, \ref{l:int|DU|phi^2<},  \ref{l:varphi}.
\hfill$\Box$
}

\section{Appendix}
{The following proposition gives a complete expression for $E_f''(\Om)(V,V)$ in terms of boundary integrals 
in cases when $\Omega$ is smooth or convex, and 
extends the definition of ${\cal H}|\nabla U|^2$ as an element of $(W^{1,\infty}(\partial\Om))'$ in the case $\Om$ is convex.
\begin{proposition}\label{p:E_f''-complete}
Under the  assumptions of Theorem \ref{th:e_f''(0)=}, except for $\Omega$, the formula \eqref{eq:Ef''} holds if:
\\
i) $\Omega$ is of class $C^{2,\sigma}$, for any $\sigma\in(0,1)$, where the term 
$\int_{\partial\Om}{\cal H} |\nabla U|^2 |V|^2$ is understood with ${\cal H}$ the curvature in the clasical sense, or
\\
ii) $\Omega$ is convex, and in this case the term with ${\cal H}$ is understood as
\begin{equation}
\int_{\partial\Om}
{\cal H} |\nabla U|^2 |V|^2
:=
-
\int_{\Om}
\left[
\partial_j\partial_i^\perp U
\right]
\partial_i
\left(
\partial_j^\perp U |V|^2
\right)
=
\lim_{\eps\to0}\int_{\partial\Om_\eps}{\cal H}_\eps|\nabla U|^2 |V|^2 ds_\eps.
\label{e:def-H|DU|^2}
\end{equation}
Similarly, under the assumptions of Theorem \ref{th:l_1''(0)=}, the same statements as above hold for $\lambda_1''(\Om)(V,V)$ 
given by
\begin{eqnarray}
\hspace*{-11mm}
\lambda_1''(\Om)(V,V)
&=&
\!\!\int_{\Om}
|\nabla U_{1}'|^2 
+
\int_{\partial\Om}
\Big(
{\frac{1}{2}\lambda_{1}(\Om) ({\partial_{\nu} |U_{1}|^2)}(V\cdot\nu)^2}
\nonumber\\
&&
\hspace*{17mm}
+
\int_{\partial\Om}
\frac{1}{2}
{\cal H}(\partial_{\nu}U_{1})^2|V|^2
+
(\partial_{\nu}U_{1})^2 (V\cdot\tau)(\partial_{s} V\cdot \nu)
\Big)ds.
\end{eqnarray}
\end{proposition}
{\bf Proof}.
First we prove the result for $E_f''(\Om)$.
In both i) and ii) cases we  follow the proof of Theorem \ref{th:e_f''(0)=}. 
In the case i), we can repeat all the calculus of Theorem \ref{th:e_f''(0)=} with $\Omega_\varepsilon$ replaced by 
$\Omega$ (and so, without the need to consider the limits of different terms as $\varepsilon$ tends to zero).

In the case ii), we need to identify  $\lim_{\varepsilon\to0}K_3(\varepsilon)$.}
For this we use the following extensions for the normal and tangential vectors on $\partial\Om_\eps$, 
$\nu_\eps=-\frac{\nabla U}{|\nabla U|}$, 
$\tau_\eps=\nu_\eps^\perp=-\frac{\nabla^\perp U}{|\nabla U|}$.
Then we have
\begin{eqnarray}
K_3(\eps)
&=&
\int_{\partial\Om_\eps}
{\cal H}_\eps |\nabla U|^2 |V|^2
\nonumber\\
&=&
-
\int_{\partial\Om_\eps}
|\nabla U|^2(\nu_\eps\cdot\partial_{s_\eps}\tau)|V|^2
\nonumber\\
&=&
-
\int_{\partial\Om_\eps}
|\nabla U|^2
\left(
\nu_\eps
\cdot
\nabla\left[\frac{\nabla^\perp U}{|\nabla U|}\right]
\cdot
\frac{\nabla^\perp U}{|\nabla U|}\right)
|V|^2
\nonumber\\
&=&
-
\int_{\partial\Om_\eps}
|\nabla U|^2
\left(
\nu_\eps^i
\left[
\frac{\partial_j\partial_i^\perp U}{|\nabla U|} 
- 
\frac{\partial_i^\perp U\partial_{jk}U\partial_kU}{|\nabla U|^3}
\right]
\frac{\partial_j^\perp U}{|\nabla U|}
\right)
|V|^2
\nonumber\\
&=&
-
\int_{\partial\Om_\eps}
\nu_\eps^i
\left(
\left[
\partial_j\partial_i^\perp U
\right]
\partial_j^\perp U
|V|^2
\right)
+
\int_{\partial\Om_\eps}
(\nu_i\partial_i^\perp U)
\left[
\frac{\partial_{jk}U\partial_kU\partial_j^\perp U}{|\nabla U|^2}
\right]
|V|^2
\nonumber\\
&=&
-
\int_{\Om_\eps}
\partial_i
\left(
\left[
\partial_j\partial_i^\perp U
\right]
\partial_j^\perp U
|V|^2
\right)
\nonumber\\
&=&
-
\int_{\Om_\eps}
\left[
\partial_j\partial_i^\perp U
\right]
\partial_i
\left(
\partial_j^\perp U |V|^2
\right)
\nonumber\\
&\xrightarrow[\eps\to0]{}&
-
\int_{\Om}
\left[
\partial_j\partial_i^\perp U
\right]
\partial_i
\left(
\partial_j^\perp U |V|^2
\right)
\nonumber\\
&=:&
\int_{\partial\Om}
{\cal H} |\nabla U|^2 |V|^2.
\label{e:K3}
\end{eqnarray}
The proof of the result for $\lambda_1''(\Om)$ is similar to the proof for $E_f''(\Om)$ and we do not present it here.
\hfill$\Box$

{
\begin{remark}
In the case $\Om$ convex, ${\cal H}|\nabla U|^2$ is not well defined because, for example, 
${\cal H}$ is unbounded and $|\nabla U|$ is zero at a corner.
However, as the calculus proceeding \eqref{e:K3} holds
with $|V|^2=\bar{w}$, for every $w\in W^{1,\infty}(\partial\Om)$ and $\bar{w}$ the extension of $w$ given by Lemma \ref{l:Hv}, it implies that \eqref{e:def-H|DU|^2} holds with $|V|^2=\bar{w}$.
This defines ${\cal H}|\nabla U|^2$ as an element of $(W^{1,\infty}(\partial\Om))'$, 
the dual space of $W^{1,\infty}(\partial\Om)$, 
\end{remark}
}

{{We conclude with an interesting consequence of Theorem \ref{th:e_f''(0)=}.
A priori we expect a continuity property wrt $V$ for the norm of differentiability which is $W^{1,\infty}$. But one can actually get the following improved continuity property.}
\begin{corollary}\label{c:e',e''->H^1}
For $v=(v_1,v_2)\in W^{1,\infty}(\partial\Om;\R^2)$ let $V=(\bar{v}_1,\bar{v}_2)$ be the extension of $v$ as given by Lemma \ref{l:Hv}.
Then, under the same assumptions as in theorem \ref{th:e_f''(0)=} we have:
\\
i) 
The map {$v\in W^{1,\infty}(\partial\Om;\R^2)\mapsto E_{f}'(\Om)(V)$
extends continuously in ${L^1(\partial\Om;\R^2)}$ and the extension is given by \eqref{e:e_f'}}.
\\
ii) Similarly, the map
{$v\in W^{1,\infty}(\partial\Om;\R^2)\mapsto E_{f}''(\Om)(V,V)$
extends continuously in ${H^1}(\partial\Om;\R^2)\cap L^\infty(\partial\Om;\R^2)$ 
and the extension is given by {\eqref{eq:Ef''}}.}
\end{corollary}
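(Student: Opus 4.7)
For (i), Proposition \ref{p:reg(U)} gives $U\in W^{1,\infty}(\Om)$, so $|\nabla U|^2\in L^\infty(\partial\Om)$ and the right-hand side of \eqref{e:e_f'} is the linear functional
\[L(v):=-\tfrac12\int_{\partial\Om}|\nabla U|^2(v\cdot\nu)ds,\]
which satisfies $|L(v)|\le C\|v\|_{L^1(\partial\Om;\R^2)}$ and agrees with $E_f'(\Om)(V)$ on $W^{1,\infty}(\partial\Om;\R^2)$ by Proposition \ref{p:e_f'(0),e_f''(0)}. Density of $W^{1,\infty}(\partial\Om;\R^2)$ in $L^1(\partial\Om;\R^2)$ then gives the announced continuous extension.

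For (ii), the plan is to exploit the complete formula \eqref{eq:Ef''} established for convex $\Om$ in Proposition \ref{p:E_f''-complete}(ii) and to decompose
\[E_f''(\Om)(V,V)=T_1+T_2+T_3+T_4,\]
with $T_1=\int_\Om|\nabla U'|^2$, $T_2=\int_{\partial\Om}f(\partial_\nu U)(V\cdot\nu)^2$, $T_3=\tfrac12\int_{\partial\Om}{\cal H}(\partial_\nu U)^2|V|^2$ and $T_4=\int_{\partial\Om}|\nabla U|^2(V\cdot\tau)(\partial_sV\cdot\nu)$. Each $T_i$ will be bounded by $C\|v\|^2_{H^1(\partial\Om)\cap L^\infty(\partial\Om)}$. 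As a preliminary step, a direct inspection of \eqref{e:Hv}--\eqref{e:DyHv}, using that $r$ stays bounded away from $0$ on $\operatorname{supp}\eta$, shows that the extension operator $H$ of Lemma \ref{l:Hv} maps $H^1(\partial\Om)\cap L^\infty(\partial\Om)$ continuously into $H^1(\R^2)\cap L^\infty(\R^2)$.

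The terms $T_2$ and $T_4$ are straightforward: $f,\partial_\nu U,|\nabla U|^2\in L^\infty(\partial\Om)$ yield $|T_2|\le C\|v\|_{L^2(\partial\Om)}^2$ and $|T_4|\le C\|v\|_{L^\infty(\partial\Om)}\|\partial_sv\|_{L^2(\partial\Om)}$. For $T_1$, since $U'$ is the harmonic extension of its boundary data $-\nabla U\cdot v$, the Dirichlet principle applied to the competitor $w=-\nabla U\cdot V\in H^1(\Om)$ (which has the right trace thanks to $U\in H^2(\Om)\cap W^{1,\infty}(\Om)$) gives
\[T_1\le\int_\Om|\nabla(\nabla U\cdot V)|^2\le C\bigl(\|D^2U\|_{L^2(\Om)}^2\|V\|_{L^\infty(\R^2)}^2+\|\nabla U\|_{L^\infty(\Om)}^2\|\nabla V\|_{L^2(\R^2)}^2\bigr),\]
which is controlled by $C\|v\|^2_{H^1\cap L^\infty}$ via the continuity of $H$.

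The delicate point, and the main obstacle, is $T_3$: on a merely convex set $\Om$, ${\cal H}$ is at best a measure on $\partial\Om$ and concentrates at corners, so $T_3$ cannot be estimated as a boundary integral in a naive way. The idea is to use the bulk representation \eqref{e:def-H|DU|^2} from Proposition \ref{p:E_f''-complete}(ii); expanding $\partial_i(\partial_j^\perp U\,|V|^2)$ by the Leibniz rule and applying Cauchy--Schwarz yields
\[|T_3|\le C\bigl(\|D^2U\|_{L^2(\Om)}^2\|V\|_{L^\infty(\R^2)}^2+\|\nabla U\|_{L^\infty(\Om)}\|V\|_{L^\infty(\R^2)}\|\nabla V\|_{L^2(\R^2)}\|D^2U\|_{L^2(\Om)}\bigr),\]
so that $|T_3|\le C\|v\|^2_{H^1\cap L^\infty}$ by continuity of $H$. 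Summing the four estimates produces a continuous quadratic form on $H^1(\partial\Om;\R^2)\cap L^\infty(\partial\Om;\R^2)$ which, by Proposition \ref{p:E_f''-complete}(ii), coincides with $E_f''(\Om)(V,V)$ on the dense subspace $W^{1,\infty}(\partial\Om;\R^2)$ (density holding by mollification along the arclength, using $H^1\hookrightarrow C^0$ in one dimension). This is the extension announced in the corollary.
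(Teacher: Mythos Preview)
Your proof is correct and follows essentially the same approach as the paper: both arguments note that the extension operator $H$ of Lemma~\ref{l:Hv} is continuous in the relevant weaker norms, treat the non-curvature terms of \eqref{eq:Ef''} by direct estimates, and handle the delicate curvature term $T_3$ via the bulk representation \eqref{e:def-H|DU|^2} from Proposition~\ref{p:E_f''-complete}(ii). Your version is more detailed (e.g., the explicit Dirichlet-principle bound for $T_1$ and the Leibniz expansion for $T_3$), but the strategy is the same.
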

\begin{proof}
First we note that the extension $H$ of Lemma \ref{l:Hv} extends continuously in $L^1(\partial\Om)$ and $H^1(\partial\Om)$.
Then for i) we note that \eqref{e:e_f'} is continuous wrt $V\in L^1(\partial\Om;\R^2)$, so the claim follows from the 
continuity of $H$ in $L^1$.
For the claim ii), first  we note that all the terms of {\eqref{eq:Ef''}} except $\int_{\partial\Om}{\cal H}(\partial_\nu U)^2|V|^2$
are continuous in $H^1(\partial\Om)$ wrt to $V$, so they are continuous wrt to $v$ in $H^1(\partial\Om;\R^2)$ thanks to the
continuity of $H$ in $H^1$. 
For the term  $\int_{\partial\Om}{\cal H}(\partial_\nu U)^2|V|^2$, from \eqref{e:def-H|DU|^2} with $\bar{w}=|V|^2$ 
we see that this term is continuous wrt $V$ in $H^1(\Om;\R^2)\cap L^\infty(\Om;\R^2)$ and we conclude using again the continuity of $H$ in $H^1\cap L^\infty$.
\end{proof}
}

\begin{lemma}\label{l:seminorms,equivalence}
The equivalence of seminorms \eqref{e:|h|H^1/2equiv|g|H1/2} holds.
\end{lemma}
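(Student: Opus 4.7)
\textbf{Proof proposal for Lemma \ref{l:seminorms,equivalence}.}

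The plan is to parametrize $\partial B$ by the angle $\theta \in [-\pi,\pi]$ (so $ds = d\theta$) and observe that the chord distance satisfies $|x(\theta)-y(\eta)|^2 = 4\sin^2((\theta-\eta)/2)$. Thus, after this parametrization,
\[
|h|_{H^{1/2}(\partial B)}^2 = \int_{-\pi}^{\pi}\int_{-\pi}^{\pi}
 \frac{|h(\theta)-h(\eta)|^2}{4\sin^2((\theta-\eta)/2)}\,d\theta\,d\eta,
\]
so the only difference between $|h|_{H^{1/2}(\partial B)}^2$ and $[h]_{H^{1/2}(\partial B)}^2$ is the substitution of $4\sin^2((\theta-\eta)/2)$ by $(\theta-\eta)^2$, combined with the restriction $|\theta-\eta|<\pi$.

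The first step is to reduce both integrals to the same domain of integration. For fixed $\eta \in [-\pi,\pi]$, perform the change of variable $\alpha = \theta - \eta$ in the inner integral of $|h|_{H^{1/2}(\partial B)}^2$. The variable $\alpha$ then ranges over an interval of length $2\pi$ (namely $[-\pi-\eta,\pi-\eta]$). Since $h$ is naturally $2\pi$-periodic (coming from the identification $\theta \mapsto (\cos\theta,\sin\theta)$) and the kernel $4\sin^2(\alpha/2)$ is also $2\pi$-periodic in $\alpha$, we may shift the interval to $[-\pi,\pi]$ without changing the value. The same change of variable applied to $[h]_{H^{1/2}(\partial B)}^2$ already gives an integral over $\alpha\in(-\pi,\pi)$. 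Hence
\[
|h|_{H^{1/2}(\partial B)}^2 = \int_{-\pi}^{\pi}\!\!\!\int_{-\pi}^{\pi}
\frac{|h(\eta+\alpha)-h(\eta)|^2}{4\sin^2(\alpha/2)}\,d\alpha\,d\eta,
\quad
[h]_{H^{1/2}(\partial B)}^2 = \int_{-\pi}^{\pi}\!\!\!\int_{-\pi}^{\pi}
\frac{|h(\eta+\alpha)-h(\eta)|^2}{\alpha^2}\,d\alpha\,d\eta.
\]

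The second step is to compare the two kernels pointwise on $\alpha\in(-\pi,\pi)$. The elementary inequality $\frac{2}{\pi}|x|\leq|\sin x|\leq|x|$ valid for $|x|\leq\pi/2$, applied with $x=\alpha/2$, yields $\frac{2|\alpha|}{\pi}\leq|2\sin(\alpha/2)|\leq|\alpha|$, and therefore
\[
\frac{1}{\alpha^2}\;\leq\;\frac{1}{4\sin^2(\alpha/2)}\;\leq\;\frac{\pi^2}{4\alpha^2}
\qquad\text{for all }\alpha\in(-\pi,\pi)\setminus\{0\}.
\]
Integrating this double inequality against the nonnegative weight $|h(\eta+\alpha)-h(\eta)|^2$ and then in $\eta$ gives
\[
[h]_{H^{1/2}(\partial B)}^2 \;\leq\; |h|_{H^{1/2}(\partial B)}^2 \;\leq\; \frac{\pi^2}{4}\,[h]_{H^{1/2}(\partial B)}^2,
\]
which is the claimed equivalence. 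There is no genuine obstacle here; the only subtlety is the reduction step, where the use of $2\pi$-periodicity in $\alpha$ is essential (so that the full range $|\theta-\eta|$ up to $2\pi$ appearing in $|h|_{H^{1/2}(\partial B)}^2$ gets folded into $|\alpha|\leq\pi$ and is comparable to the already-truncated range in $[h]_{H^{1/2}(\partial B)}^2$).
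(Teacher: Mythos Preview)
Your proof is correct and follows essentially the same route as the paper: parametrize by angle, reduce both seminorms to integrals over the strip $|\theta-\eta|<\pi$ using $2\pi$-periodicity, and then compare the kernels $4\sin^2((\theta-\eta)/2)$ and $(\theta-\eta)^2$ via the elementary two-sided bound for $\sin$. The only cosmetic difference is that the paper carries out the periodicity reduction by explicitly splitting the square $(-\pi,\pi)^2$ into three regions $A_1,A_2,A_3$ and translating $A_2$, $A_3$ by $\pm(2\pi,0)$ to reassemble the parallelogram $\{|\theta-\eta|<\pi\}$, whereas your change of variable $\alpha=\theta-\eta$ followed by a periodic shift accomplishes the same thing in one stroke.
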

\begin{proof}
It is classical, see for example \cite{DD2012}, that for 
$h\in H^{1/2}(\partial\Omega)$, its seminorm is given by
\begin{equation}
|h|^2_{H^{1/2}(\partial B)}
=
\int_{\partial B}ds_y\int_{\partial B}\frac{|h(x)-h(y)|^2}{|x-y|^2}ds_x.
\label{e:|u|H1/2,1}
\end{equation}
For $\theta,\, \eta\in(-\pi,\pi)$, $x=(\cos\theta,\sin\theta)$, $y=(\cos\eta,\sin\eta)$  we set (with a slight abuse of notations)
 $h(\theta)=h(x)$, $h(\eta)=h(y)$. Then \eqref{e:|u|H1/2,1} is equal to
\begin{eqnarray}
|h|^2_{H^{1/2}(\partial B)}
&=&
\int_{-\pi}^\pi d\eta 
\int_{-\pi}^\pi  
\frac{|h(\theta)-h(\eta))|^2}{4\sin^2\left(\frac{\theta-\eta}{2}\right)}
d\theta
=
I_1+I_2+I_3,\;\; \mbox{where} 
\label{e:|u|H1/2,1'}\\
I_i
&=&
\iint_{A_i}\frac{|h(\theta)-h(\eta)|^2}{4\sin^2\left(\frac{\theta-\eta}{2}\right)}d\theta d\eta,\;\, i=1,2,3,
\nonumber
\end{eqnarray}
and
\[
\hspace*{-2mm}
\begin{array}{l}
A_1=
\{(\theta,\eta)\in (-\pi,\pi)^2;\, -\pi<\theta-\eta<\pi\},
\\
A_2=
\{(\theta,\eta)\in(-\pi,\pi)^2;\, \pi<\theta-\eta<2\pi\}
=
\{(\theta,\eta)\in(0,\pi)\times(-\pi,0);\, \pi<\theta-\eta<2\pi\},
\\
A_3=
\{(\theta,\eta)\in(-\pi,\pi)^2;\, -2\pi<\theta-\eta <-\pi\}
=
\{(\theta,\eta)\in(-\pi,0)\times(0,\pi);\, -2\pi<\theta-\eta < -\pi\}.
\end{array}
\]
Let $\hat{\theta}=\theta-2\pi$, $\hat{\eta}=\eta$, $\hat{A}_2=A_2-(2\pi,0)$. Then
\begin{eqnarray}
I_2
&=&
\iint_{A_2}\frac{|h(\theta)-h(\eta))|^2}{4\sin^2\left(\frac{\theta-\eta}{2}\right)}d\theta d\eta
=
\iint_{A_2}\frac{|h(\hat{\theta}+2\pi)-h(\hat{\eta})|^2}{4\sin^2\left(\frac{\hat{\theta}-\hat{\eta}}{2}+\pi\right)}d\theta d\eta
=
\iint_{\hat{A}_2}\frac{|h(\hat{\theta})-h(\hat{\eta})|^2}{4\sin^2\left(\frac{\hat{\theta}-\hat{\eta}}{2}\right)}d\hat{\theta} d\hat{\eta}.
\nonumber
\end{eqnarray}
One can check that 
\[
\hat{A}_2=
\{(\hat{\theta},\hat{\eta})\in (-2\pi,-\pi)\times(-\pi,0);\, 
-\pi<\hat{\theta}-\hat{\eta}<0\}.
\]
Similarly, let $\hat{\theta}=\theta+2\pi$, $\hat{\eta}=\eta$, 
$\hat{A}_3=A_3+(2\pi,0)$. Then 
\begin{eqnarray}
I_2
&=&
\iint_{A_3}\frac{|h(\theta)-h(\eta))|^2}{4\sin^2\left(\frac{\theta-\eta}{2}\right)}d\theta d\eta
=
\iint_{A_3}\frac{|h(\hat{\theta}-2\pi)-h(\hat{\eta})|^2}{4\sin^2\left(\frac{\hat{\theta}-\hat{\eta}}{2}-\pi\right)}d\theta d\eta
=
\iint_{\hat{A}_3}\frac{|h(\hat{\theta})-h(\hat{\eta})|^2}{4\sin^2\left(\frac{\hat{\theta}-\hat{\eta}}{2}\right)}d\hat{\theta} d\hat{\eta}.
\nonumber
\end{eqnarray}
Similarly to $\hat{A}_2$, one can check that 
\[
\hat{A}_3=
\{(\hat{\theta},\hat{\eta})\in (\pi,2\pi)\times(0,\pi);\,
0<\hat{\theta}-\hat{\eta}<\pi\}.
\]
Therefore we get
\[
|h|^2_{H^{1/2}(\partial B)}
=
I_1+I_2+I_3
=
\iint_{\hat{A}_1\cup A_2\cup \hat{A}_3}
\frac{|h(\hat{\theta})-h(\hat{\eta})|^2}{4\sin^2\left(\frac{\hat{\theta}-\hat{\eta}}{2}\right)}d\hat{\theta} d\hat{\eta}
=
\int_{-\pi}^\pi
d{\eta}
\int_{\{|\theta-\eta|<\pi\}}
\frac{|h({\theta})-h({\eta})|^2}{4\sin^2\left(\frac{{\theta}-{\eta}}{2}\right)}d{\theta},
\]
because $A_1\cup\hat{A}_2\cup\hat{A}_3$ is the parallelogram
$\{(\theta,\eta),\, \eta\in(-\pi,\pi),\; |\theta-\eta|<\pi\}$.
Note that we have
\[
\frac{1}{\pi}|\theta-\eta|
\leq
\sin\left|\frac{\theta-\eta}{2}\right|\leq\frac{|\theta-\eta|}{2},\quad
\forall (\theta,\eta)\in A_1\cup\hat{A}_2\cup\hat{A}_3,
\]
which combined with the last equality proves the claim.
\end{proof}

\noindent{\bf Acknowledgements.} This work was partially supported by the project ANR-18-CE40-0013 SHAPO financed by the French Agence Nationale de la Recherche (ANR). Arian Novruzi acknowledges the support of the Natural Sciences and Engineering Research Council of Canada (NSERC)/
remercie le Conseil de recherches en sciences naturelles et en g\'enie du Canada (CRSNG) de son soutien.

\bibliographystyle{plain}
\bibliography{references-4}

\end{document}